\newcommand{\Q}{{\mathbb Q}}
\def\Diff{\operatorname{Diff}}
\def\Int{\operatorname{Int}}
\numberwithin{equation}{section}
\newtheorem{cor}[equation]{Corollary}
\newtheorem{lem}[equation]{Lemma}
\newtheorem{prop}[equation]{Proposition}
\newtheorem{thm}[equation]{Theorem}
\newtheorem{Example}[equation]{Example}
\newenvironment{ex}{\begin{Example}\rm}{\end{Example}}
\newtheorem{remark}[equation]{Remark}
\newenvironment{rmk}{\begin{remark}\rm}{\end{remark}}
\def\co{\colon\thinspace}
\newcommand{\im}{\ensuremath{\operatorname{Im}}}
\newcommand{\e}{\varepsilon}
\def\a{\alpha}
\def\g{\gamma}
\def\d{\partial}
\def\r{\rho}
\def\i{\iota}
\def\s{\sigma}
\def\S1{\bf S^1}
\def\equalsfill{$\m@th\mathord=\mkern-7mu
\cleaders\hbox{$\!\mathord=\!$}\hfill
\mkern-7mu\mathord=$}
\begin{document}

\abovedisplayskip=6pt plus3pt minus3pt
\belowdisplayskip=6pt plus3pt minus3pt

\title[Nonnegatively curved metrics and pseudoisotopies]
{\bf Space of nonnegatively curved metrics and pseudoisotopies}

\thanks{\it 2000 Mathematics Subject classification.\rm\ 
Primary 53C20, Secondary 57N37, 19D10.
%53C20 Global Riemannian geometry, including pinching
%57N37 Isotopy and pseudo-isotopy 
%19D10 Algebraic K-theory of spaces 
%18F25 Algebraic $K$-theory and $L$-theory
%55P62 Rational homotopy theory 
\it\ Keywords:\rm\ nonnegative curvature, soul, 
pseudoisotopy, space of metrics, diffeomorphism group.}\rm

\author{Igor Belegradek\and F. Thomas Farrell\and Vitali Kapovitch}

\address{Igor Belegradek\\School of Mathematics\\ Georgia Institute of
Technology\\Atlanta, GA, USA 30332-0160\\}\email{ib@math.gatech.edu}

\address{F. Thomas Farrell\\Department of Mathematical Sciences\\
Binghamton University\\PO Box 6000, Binghamton, New York, USA 13902-6000\\}\email{farrell@math.binghamton.edu}

\address{Vitali Kapovitch\\Department of Mathematics\\
University of Toronto\\
Toronto, ON, Canada M5S 2E4\\}\email{vtk@math.toronto.edu}

%\thanks{}

\date{}
\begin{abstract}
Let $V$ be an open manifold with complete
nonnegatively curved metric such that the normal 
sphere bundle to a soul has no section.
We prove that the souls of nearby nonnegatively 
curved metrics on $V$ are smoothly close. Combining 
this result with some topological properties of pseudoisotopies 
we show that for many $V$ the space of complete nonnegatively
curved metrics has infinite higher homotopy groups.
\end{abstract}
\maketitle
%\tableofcontents

\section{Introduction}

Throughout the paper ``smooth'' means $C^\infty$, all manifolds are smooth, and
any set of smooth maps, such as
diffeomorphisms, embeddings, pseudoisotopies, or Riemannian metrics,
is equipped with the smooth compact-open topology.  

Let ${\mathfrak R}_{K\ge 0}(V)$ denote the space of complete Riemannian 
metrics of nonnegative sectional curvature on a connected manifold $V$\!. 
The group $\Diff V$ acts on ${\mathfrak R}_{K\ge 0}(V)$ by
pullback.
% with the orbit map of the metric $h$ given by $\theta_h(\phi)=\phi^{-1*}h$.
Let ${\mathfrak M}_{K\ge 0}(V)$ be the associated {\it moduli space}, 
the quotient space of ${\mathfrak R}_{K\ge 0}(V)$
by the above $\Diff V$ action.

Many open manifolds $V$ for which ${\mathfrak M}_{K\ge 0}(V)$ 
is not path-connected, or even has infinitely many path-components, were
found in~\cite{KPT, BKS-mod1, BKS-mod2, Ott-pairs}.  
On the other hand, it was shown in~\cite{BelHu-modr2} that ${\mathfrak R}_{K\ge 0}(\mathbb R^2)$ is 
homeomorphic to the separable Hilbert space, and 
the associated moduli space ${\mathfrak M}_{K\ge 0}(\mathbb R^2)$ 
cannot be separated by a closed
subset of finite covering dimension.

Recall that any open complete
manifold $V$ of $K\ge 0$ contains a compact totally convex submanifold
without boundary, called a {\it soul\,}, such that
$V$ is diffeomorphic to the interior of a tubular neighborhood
of the soul~\cite{CheGro}. 
We call a connected open manifold {\it indecomposable\,} 
if it admits a complete metric of $K\ge 0$ such that the normal sphere bundle
to a soul has no section.

Let $N$ be a compact manifold (e.g. a tubular neighborhood of a soul).
A key object in this paper is the map
$\iota_N\co P(\d N)\to\Diff N$  
that extends a pseudoisotopy from a fixed collar neighborhood of 
$\d N$ to a diffeomorphism of $N$ supported in the collar neighborhood.
Here $P(\d N)$ and $\Diff N$ are the topological groups of pseudoisotopies of $\d N$ and
diffeomorphisms of $N$, respectively, see Section~\ref{sec: pi} for background. 
Let $\pi_j(\iota_N)$ be the homomorphism induced by $\iota_N$ 
on the $j$th homotopy group based at the identity. 
We prove the following.

\begin{thm} 
\label{thm-intro: main}
Let $N$ be a compact manifold with indecomposable interior. 
Then for every $h\in {\mathfrak R}_{K\ge 0}(\Int N)$ and each 
$k\ge 2$, the group $\ker\pi_{k-1}(\iota_N)$ is a quotient of a
subgroup of $\pi_k ({\mathfrak R}_{K\ge 0}(\Int N), h)$. 
\end{thm}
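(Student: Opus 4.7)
The plan is to construct a group homomorphism $\Theta\co\ker\pi_{k-1}(\iota_N)\to\pi_k(\mathfrak R_{K\ge 0}(\Int N), h)/A$ for a suitable subgroup $A\le\pi_k(\mathfrak R_{K\ge 0}(\Int N), h)$ of ambiguities, to use the paper's soul-rigidity theorem to establish that $\Theta$ is injective, and then to recognize $\ker\pi_{k-1}(\iota_N)\cong H/A$ as a quotient of $H=q^{-1}(\mathrm{im}(\Theta))$, where $q\co\pi_k(\mathfrak R_{K\ge 0}(\Int N), h)\to\pi_k(\mathfrak R_{K\ge 0}(\Int N), h)/A$ is the quotient.

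Given $\alpha\in\ker\pi_{k-1}(\iota_N)$ represented by $f\co S^{k-1}\to P(\d N)$ together with a nullhomotopy $F\co D^k\to\Diff N$ of $\iota_N\circ f$, the class $\Theta(\alpha)$ is constructed by gluing two $D^k$-families of $K\ge 0$ metrics on $\Int N$ into a based sphere $g\co S^k\to\mathfrak R_{K\ge 0}(\Int N)$. The first family is $\bar F(x) = F(x)^*h$, a disk of pullbacks of $h$. The second exploits that each $\iota_N(f(x))$ is supported in a fixed collar of $\d N$: viewing $D^k$ as the cone on $S^{k-1}$ and rescaling the supporting subcollar from full width at $S^{k-1}$ down to zero at the cone point produces a canonical continuous $D^k$-family of collar-supported diffeomorphisms interpolating $\iota_N\circ f$ at $S^{k-1}$ and the identity at the cone point; its pullback of $h$ gives the second disk of $K\ge 0$ metrics. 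Both disks agree on the equator $S^{k-1}$, namely $\iota_N(f(x))^*h$, which equals $h$ at the basepoint since $f$ is based at the identity, so they glue to a based $g$. The ambiguities from different nullhomotopies $F$ and different rescalings contribute the image of the orbit map $\mu_*\co\pi_k(\Diff N)\to\pi_k(\mathfrak R_{K\ge 0}(\Int N), h)$, $\phi\mapsto\phi^*h$, so one sets $A:=\mathrm{im}(\mu_*)$. Identifying the assignment with a connecting-type homomorphism via the long exact sequence of the pair $(\Diff N, P(\d N))_{\iota_N}$ (formed by the mapping cylinder of $\iota_N$), one verifies $\Theta$ is a well-defined group homomorphism.

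The main obstacle is injectivity of $\Theta$, and it is here that soul rigidity under indecomposability becomes essential. Suppose $\Theta(\alpha)=0$; then $g$ is homotopic through $K\ge 0$ metrics to a based sphere of the form $\Phi^*h$ for some $\Phi\co S^k\to\Diff N$. Applying soul closeness to a homotopy $H\co S^k\times I\to\mathfrak R_{K\ge 0}(\Int N)$ between $g$ and $\Phi^*h$ produces a continuous $(k{+}1)$-parameter family of souls, which lifts via Cheeger-Gromoll-Sharafutdinov theory and tubular neighborhood uniqueness to a continuous family of diffeomorphisms $\Psi\co S^k\times I\to\Diff(\Int N)$ realizing $H$ as a pullback of $h$; $\Psi$ can further be arranged to be supported in the collar of $\d N$. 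Restricting $\Psi$ to the collar and comparing the endpoints with the explicit diffeomorphisms appearing in $\bar F$ and the rescaling construction yields a $D^k$-family of pseudoisotopies of $\d N$ bounding $f$, so $[f]=0$ in $\pi_{k-1}(P(\d N))$, establishing injectivity. The essential difficulty is the construction of the lift $\Psi$ from $H$ and the verification that $\Psi$ may be taken supported in the collar; this is precisely the content enabled by the paper's principal geometric result.
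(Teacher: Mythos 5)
The proposal inverts the logical direction of the argument and, as written, the central construction does not go through. Two concrete problems:

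\textbf{The rescaling family is not continuous.} You propose to interpolate between $\iota_N(f(x))$ and the identity by shrinking the supporting subcollar to zero width at the cone point. This is the ``Alexander trick'' in the wrong direction: conjugating a pseudoisotopy $\psi$ of $\partial N\times I$ into a subcollar of width $t$ multiplies the transverse derivative of $\psi$ by $1/t$, so except in the degenerate case $\psi=\phi\times\mathrm{id}$, the rescaled diffeomorphisms do not converge in the $C^\infty$ topology on $\Diff N$ as $t\to 0$. The trick that \emph{does} work (``Alexander trick towards infinity'') pushes the support into the noncompact end of $V=\Int N$, but then you leave $\Diff N$ and enter $\Diff V$, which matters for the next point.

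\textbf{Even if the rescaling worked, $\Theta$ would vanish.} Both of your disks are pullbacks of $h$ by families of diffeomorphisms of $N$, so they glue to $g=\Phi^*h$ for a \emph{single} sphere $\Phi\co S^k\to\Diff N$. Hence $[g]\in\im\mu_*=A$ for every $\alpha$, so $\Theta\equiv 0$ in $\pi_k/A$ and the injectivity you need fails identically. (If the rescaling is done in $\Diff V$ instead, $[g]$ still lies in $\im\pi_k(\theta_h)$ where $\theta_h$ is the orbit map of $\Diff V$, and the analysis of whether $[g]\in A$ becomes entangled with exactly the subtlety your construction was supposed to resolve.) In addition, the purported injectivity argument asks to lift a homotopy $H\co S^k\times I\to{\mathfrak R}_{K\ge 0}$, through its soul family, to a map into $\Diff V$. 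Soul continuity only gives a map to the space of submanifolds $\mathcal X(N,V)$; there is no section of $\Diff V\to\mathcal X(N,V)$, so no such canonical lift exists.

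The paper goes in the other direction and never attempts a lift. Soul continuity (Theorem~\ref{intro-thm: soul cont} and Corollary~\ref{cor: injrad cont}) produces a continuous map $\delta\co{\mathfrak R}_{K\ge 0}(V)\to\mathcal X(N,V)$, which together with the orbit map $\theta_h\co\Diff V\to{\mathfrak R}_{K\ge 0}(V)$ and the restriction $\Diff V\to\mathrm{Emb}(N,V)$ (a weak equivalence, fiber contractible by the Alexander trick towards infinity) fits into a commutative diagram with the principal-bundle sequence $\Diff N\to\mathrm{Emb}(N,V)\to\mathcal X(N,V)$. Commutativity gives that $\pi_k(\delta)$ restricted to the subgroup $\im\pi_k(\theta_h)\le\pi_k{\mathfrak R}_{K\ge 0}(V)$ surjects onto $\im\pi_k(q)$, and the fibration sequence plus Theorem~\ref{thm: pseudoisot is loop space} identifies $\im\pi_k(q)\cong\ker\pi_{k-1}(\Omega f)\cong\ker\pi_{k-1}(\iota_N)$. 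So the theorem is proved by exhibiting a \emph{surjection from a subgroup of $\pi_k$ onto $\ker\pi_{k-1}(\iota_N)$}, not by embedding $\ker\pi_{k-1}(\iota_N)$ into a quotient of $\pi_k$. Your intuition about which objects should be involved (pullback spheres, the orbit image, soul continuity) is on target, but the roles need to be reversed: the soul map is what you push metrics \emph{forward} along, not something you use to lift families of metrics back to diffeomorphisms.
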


Prior to this result there has been no tool to detect nontrivial
higher homotopy groups of ${\mathfrak R}_{K\ge 0}(V)$.

We make a systematic study of $\ker\pi_j(\iota_N)$ and
find a number of manifolds for which $\ker\pi_j(\iota_N)$ is infinite
and $\Int N$ admits a complete metric of $K\ge 0$. 
Here is a sample of  what we can do:
 
\begin{thm} \label{thm-intro: main app}
Let $U$ be the total space of one of the following vector bundles:
\vspace{-3pt}
\begin{enumerate}
\item[\textup{(1)}]
the tangent bundle to $S^{2d}$, $CP^d$, $HP^d$, $d\ge 2$, and the Cayley plane, \vspace{1pt}
\item[\textup{(2)}] 
the Hopf $\,\mathbb R^4$ or $\mathbb R^3$ bundle over $HP^d$, $d\ge 1$,\vspace{1pt}
\item[\textup{(3)}] 
any linear $\mathbb R^4$ bundle over $S^4$ with nonzero Euler class, \vspace{1pt}
\item[\textup{(4)}] any nontrivial $\mathbb R^3$ bundle over $S^4$,\vspace{1pt}
\item[\textup{(5)}] the product of any bundle in 
\textup{(1)}, \textup{(2)}, \textup{(3)}, \textup{(4)} and 
any closed manifold of $K\ge 0$ and nonzero Euler characteristic.
\end{enumerate}
\vspace{-3pt}
Then there exists $m$ such that every path-component of ${\mathfrak R}_{K\ge 0}(U\times S^m)$ 
has some nonzero rational homotopy group.
\end{thm}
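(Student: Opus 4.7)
The plan is to deduce Theorem~\ref{thm-intro: main app} from Theorem~\ref{thm-intro: main} applied to $N := D(U)\times S^m$, where $D(U)$ denotes the closed disk bundle of the vector bundle $U$, so that $\Int N\cong U\times S^m$. There are three tasks: (a) verify that $U\times S^m$ is indecomposable, (b) produce, for some $k\ge 2$ and some sufficiently large $m$, an element of infinite order in $\ker\pi_{k-1}(\iota_N)$, and (c) invoke Theorem~\ref{thm-intro: main}. Since $\iota_N$ does not depend on the chosen nonnegatively curved metric $h$, an infinite-order element of $\ker\pi_{k-1}(\iota_N)$ lifts to an infinite-order element in the subgroup of $\pi_k(\mathfrak R_{K\ge 0}(\Int N), h)$ supplied by Theorem~\ref{thm-intro: main}, for every basepoint $h$, and hence gives a nontorsion rational class in every path-component.

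For (a), each listed total space admits an explicit complete metric of $K\ge 0$: the canonical homogeneous metric on the tangent bundle of a rank-one symmetric space in (1); invariant metrics on the quaternionic and Cayley Hopf bundles in (2); Grove--Ziller cohomogeneity-one metrics on the bundles over $S^4$ in (3) and (4); and Riemannian products with a closed nonnegatively curved factor in (5). Take as soul the zero section crossed with the closed factor (if any) and with $S^m$; then the normal sphere bundle is the pullback of the sphere bundle $S(U)\to B$ over the base $B$. Nonexistence of a section follows from obstruction theory: in (1) from $\chi(B)\ne 0$; in (2) from the characteristic-class obstruction for the Hopf bundles; in (3) from nonvanishing Euler class; in (4) from the classification of rank-$3$ bundles over $S^4$ via $\pi_3 SO(3)=\mathbb Z$; and in (5) from product-stability together with $\chi\ne 0$ of the closed factor.

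For (b), choose $m$ large enough that $(k-1,\dim\partial N)$ lies in Igusa's stable range for pseudoisotopies. Then $\pi_{k-1}P(\partial N)\otimes\mathbb Q$ is computed by Waldhausen's machinery as an expression in $K_*(\mathbb Z[\pi_1(\partial N)])\otimes\mathbb Q$; since $\partial N = S(U)\times S^m$ is simply connected in all cases listed, this reduces to $K_*(\mathbb Z)\otimes\mathbb Q$, which by Borel is $\mathbb Q$ in degrees $4j+1$ for $j\ge 1$. Selecting $k-1=4j+1$ for a suitable $j$ produces an infinite cyclic rational class. To place such a class in $\ker\pi_{k-1}(\iota_N)$, I would factor $\iota_N$ through a concordance stabilization map that kills the Borel class rationally, the extra $S^m$-factor providing the room to isotope the extended diffeomorphism to the identity via a parametrized concordance-implies-isotopy argument of Farrell--Hsiang or Hatcher type.

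The main obstacle is step (b): converting an abstract K-theoretic Borel class into an explicit family of pseudoisotopies whose extensions under $\iota_N$ become isotopically trivial in $\Diff N$. Step (a) is a case-by-case check of well-documented geometric constructions and obstruction theory, and (c) is immediate from Theorem~\ref{thm-intro: main} once (a) and (b) are in hand.
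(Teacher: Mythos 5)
Your reduction to Theorem~\ref{thm-intro: main} (steps (a) and (c)) matches the paper, but step~(b) contains a serious error in the rational computation of the pseudoisotopy space, and the proposed mechanism for landing in $\ker\pi_{k-1}(\iota_N)$ would not work.

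First, the claim that simple-connectedness of $\partial N$ reduces $\pi_{k-1}P(\partial N)\otimes\Q$ to Borel's computation of $K_*(\mathbb Z)\otimes\Q$ is false. Waldhausen's $A(X)$ is not $K(\mathbb Z[\pi_1 X])$; the correct relation is equation~(\ref{form: waldhausen}), $\pi_{i+2}A(M)\cong\pi_{i+2}^S(M_+)\oplus\pi_i\mathscr P(M)$, and for a simply-connected but non-contractible $M$ the reduced part $\pi_*^\Q(\mathscr P(M),\mathscr P(*))$ is governed by the equivariant rational homology of the free loop space $LM$ via Goodwillie's isomorphism~(\ref{form: goodwillie's iso}), not by $K_*(\mathbb Z)$. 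Borel's classes only account for the $\mathscr P(*)$-summand. Second, and fatally, the Borel classes generally do \emph{not} lie in $\ker\pi_{k-1}(\iota_N)$: Remark~\ref{rmk: iota Dn} shows that $\iota_{D^n}$ is injective on all homotopy groups, so the classes coming from $\mathscr P(*)$ persist in $\Diff N$. Your heuristic that ``a concordance stabilization map kills the Borel class rationally'' and that ``the extra $S^m$ factor provides room to isotope the extended diffeomorphism to the identity'' has no basis --- Hatcher's spectral sequence shows the $\mathscr P$-contribution typically survives into $\pi_*^\Q\Diff$.

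The paper's argument is a dimension count that goes the other way around: Theorem~\ref{thm: localize} gives
$\dim\ker\pi_i^\Q(\iota_{E\times S^{k+\e}})\ge\tfrac12\dim\pi_i^\Q\mathscr P(\d E)-\dim\pi_i^\Q\Diff(E\times D^{k+\e},\d)$,
so one must show the \emph{reduced} rational pseudoisotopy of $\d E$ (computed via cyclic homology, formulas~(\ref{form: P(even sphere)})--(\ref{form: P(S7xS4)})) is strictly larger than what $\Diff$ can hold. The upper bound on $\pi_i^\Q\Diff$ comes from Theorem~\ref{thm: bound on pi_i Diff}, which combines the surgery exact sequence, Hatcher's spectral sequence comparing $\Diff$ and $\widetilde{\Diff}$, and the finiteness of $\pi_i G(M,\d)$ via obstruction theory and rational ellipticity (Proposition~\ref{prop: bound on G}). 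In each case~(1)--(4) the paper chooses degrees $i$ where $\pi_i^\Q\mathscr P(\d E)\ne 0$ while $\pi_i^\Q\mathscr P(N)=0$ and the homology/$L$-theory/$G$-terms vanish (Theorem~\ref{thm: bounds on P(E) and P(dE)}, Corollary~\ref{cor: pi=0 implies diff=0}); case~(5) is handled by Proposition~\ref{prop: euler char}. None of this is a routine consequence of Borel's theorem, and your step~(b) would need to be replaced wholesale by this comparison.
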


It is well-known that each $U$ in Theorem~\ref{thm-intro: main app}
admits a complete metric of $K\ge 0$:
For bundles in (3), (4) this follows from~\cite{GroZil}, and the bundles in (1), (2)
come with the the standard Riemannian submersion metrics, see Example~\ref{ex: indecomp}\,(2).

We can also add to the list in Theorem~\ref{thm-intro: main app}
some $\mathbb R^4$ and $\mathbb R^3$ bundles over $S^5$ and $S^7$ 
and an infinite family of $\mathbb R^3$ bundles over $CP^2$, which admit a complete
metrics of $K\ge 0$ thanks to~\cite{GroZil} and~\cite{GroZil-lift}, respectively. 
Other computations are surely possible. In fact we are yet to find $N$
with indecomposable interior and such that $\i_N$ is injective on all
homotopy groups; the latter does happen when $N=D^n$,
see Remark~\ref{rmk: iota Dn}.

\renewcommand{\thefootnote}{*}
We are unable\footnote{Explicit computations of $m$ will appear in the
upcoming work of Mauricio Bustamante, the second author and Jiang Yi.}to compute $m$ in Theorem~\ref{thm-intro: main app}. 
Given $U$ we find $k\ge 1$ such that for every $l\gg k$ 
there is $\s\in \{0,1,2,3\}$ for which the group
$\pi_k \, {\mathfrak R}_{K\ge 0}(U\times S^{l+\s})\otimes\Q$ is nonzero.
Here $k$ and the bound ``$l\gg k$'' are explicit, but $\s$ is not explicit.
The smallest $k\ge 1$ for which we know that the group is nonzero
is $k=7$, which occurs when 
$U$ is the total space of a nontrivial $\mathbb R^3$ bundle over $S^4$.

We do not yet know how to detect nontriviality 
of $\pi_k\, {\mathfrak M}_{K\ge 0}(V)$, $k\ge 1$.
The nonzero elements in $\pi_k\, {\mathfrak R}_{K\ge 0}(U\times S^{m})$
given by Theorem~\ref{thm-intro: main app} 
lie in the kernel of the $\pi_k$-homomorphism induced by the quotient map 
${\mathfrak R}_{K\ge 0}(U\times S^{m})\to {\mathfrak M}_{K\ge 0}(U\times S^{m})$.
\vspace{5pt}

{\bf Structure of the paper} In Section~\ref{sec: geom ingr}
we outline geometric ingredients of the proof with full details
given in Section~\ref{sec: indec}. Theorem~\ref{thm-intro: main} 
is proved in Section~\ref{sec: top ingr}. In Section~\ref{sec: main results}
we derive the results on $\ker\pi_{j}(\iota_N)$, 
and prove Theorem~\ref{thm-intro: main app}.
The proof involves various results on pseudoisotopy spaces
occupying the rest of the paper; many of these results are certainly
known to experts, but often do not appear in the literature in the
form needed for our purposes. Theorems~\ref{thm: localize} and 
Proposition~\ref{prop: euler char} are key ingredients in establishing nontriviality
of $\ker\pi_{j}(\iota_N)$.
\vspace{5pt}

{\bf Acknowledgments} 
We are thankful to Ricardo Andrade for
a sketch of Theorem~\ref{thm: pseudoisot is loop space}, to
John Klein for Remark~\ref{rmk: iota Dn}, and
to the referee for helping us find a sharper version
of Theorem~\ref{thm-intro: main} and other useful comments. 
The first two authors are grateful for NSF support: DMS-1105045 (Belegradek),
DMS-1206622 (Farrell). The third author was supported in part by a 
Discovery grant from NSERC.

\section{Geometric ingredients of Theorem~\ref{thm-intro: main}}
\label{sec: geom ingr}

Open complete manifolds of $K\ge 0$ enjoy a rich structure theory.
The soul construction of~\cite{CheGro}
takes as the input a basepoint of a complete open manifold $V$ of $K\ge 0$, 
and produces a compact totally convex submanifold $S$ without boundary, the so called {\it soul} of $g$, 
such that $V$ is diffeomorphic to the total space of the normal bundle of $S$. 
The soul need not contain the basepoint.

Different  basepoints sometimes produce different souls, 
yet any two souls can be moved to each 
other by a ambient diffeomorphism that restricts to an isometry on the souls, see~\cite{Sha}.
On the other hand, the diffeomorphism type and the ambient isotopy type of the soul may
depends on the metric, see~\cite{Bel, KPT, BKS-mod1, BKS-mod2, Ott-pairs, Ott-triv-norm}.

The soul construction involves asymptotic geometry so there is no a priori reason
to expect that the soul will depends continuously on the metric
varying in the smooth compact-open topology. We resolve this by imposing
the topological assumption that $V$ is
{\it indecomposable\,} meaning that $V$ admits a complete
metric of $K\ge 0$ such that the normal 
sphere bundle to a soul has no  section.
This occurs if the normal bundle to
a soul has nonzero Euler class, see Section~\ref{sec: indec} for other examples.
Also in Section~\ref{sec: indec} we explain that any indecomposable manifold $V$
has the following properties:
\begin{itemize}
\item[\textup{(i)}]
Any metric in ${\mathfrak R}_{K\ge 0}(V)$ has a unique soul, see~\cite{Yim-pseudosoul}. 
\vspace{2pt}
\item[\textup{(ii)}] 
If two metrics lie in the same path-component of ${\mathfrak R}_{K\ge 0}(V)$, 
then their souls are diffeomorphic, see~\cite{KPT}, 
and ambiently isotopic~\cite{BKS-mod1}.\vspace{2pt}
\item[\textup{(iii)}] 
The souls of any two metrics in ${\mathfrak R}_{K\ge 0}(V)$ have nonempty intersection.
\item[\textup{(iv)}] 
The normal sphere bundle to a soul $S$ of any
metric in ${\mathfrak R}_{K\ge 0}(V)$ has no   section. In particular,
$\dim(V)\le 2\dim (S)$. 
\end{itemize}

If $Q$ is a compact smooth submanifold of $V$,
we let $\mathrm{Emb}(Q,V)$ denote the space of all smooth embeddings 
of $Q$ into $V$\!. 
By the isotopy extension theorem 
the $\Diff(V)$-action on $\mathrm{Emb}(Q,V)$ by postcomposition
is transitive on each path-component, and its orbit map is a fiber bundle, see~\cite{Pal-iso-ext, Cer-iso-ext}.
The fiber over the inclusion $Q\hookrightarrow V$ is 
$\Diff(V, \mathrm{rel}\, Q)$, the subgroup of the diffeomorphisms that fix $Q$ pointwise.

The group $\Diff Q$ acts freely on $\mathrm{Emb}(Q,V)$ by
precomposing with diffeomorphisms of $Q$. 
Let $\mathcal X(Q, V)=\mathrm{Emb}(Q,V)/\Diff Q$
with the quotient topology; the orbit map is a locally trivial principal bundle, 
see~\cite{GV}.
Let $\mathcal X(V)=\coprod_Q \mathcal X(Q,V)$,
the space of compact submanifolds of $V$ with smooth topology.
Here is the main geometric ingredient of this paper.

\begin{thm}
\label{intro-thm: soul cont}
If $V$ is indecomposable, the map ${\mathfrak R}_{K\ge 0}(V)\to\mathcal X(V)$ 
that associates to a metric its unique soul is continuous.
\end{thm}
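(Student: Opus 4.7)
The proof is local: fix $g_0\in \mathfrak R_{K\ge 0}(V)$ with its unique soul $S_0$ (unique by property (i) since $V$ is indecomposable) and let $g_n\to g_0$ in the $C^\infty$ compact-open topology with souls $S_n$. I must show $S_n\to S_0$ in the smooth topology of $\mathcal X(V)$, i.e.\ that for $n$ large $S_n$ is the image of a smooth embedding of $S_0$ into $V$ that is $C^\infty$-close to the inclusion.

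\emph{Step 1 (uniform confinement).} Property (iii) forces each $S_n$ to meet the fixed compact set $S_0$. Using continuity of Busemann functions in the metric, together with the Cheeger-Gromoll description of a soul as the outcome of iterated Sharafutdinov stripping of a compact totally convex sublevel set, I would produce a single compact $K\subset V$ with $S_n\subset K$ for all sufficiently large $n$. Property (iv) enters here to rule out dimension collapse of the intermediate totally convex strata: without no-section of the normal sphere bundle, the iterative stripping could terminate prematurely at a lower-dimensional limit.

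\emph{Step 2 (Hausdorff subsequential limit is $S_0$).} Confinement in $K$ gives, by precompactness of the Hausdorff topology on closed subsets of $K$, subsequential convergence $S_{n_j}\to S_\infty\subset V$. Total convexity passes to Hausdorff limits under smooth convergence of the background metrics, so $S_\infty$ is compact and totally convex in $(V,g_0)$. By (ii), for $n$ large all $S_n$ are diffeomorphic to $S_0$ via an ambient isotopy, and each $S_n$ is totally geodesic for $g_n$; these two facts transfer to show $S_\infty$ is a smooth compact totally geodesic submanifold without boundary of $(V,g_0)$ of the same dimension as $S_0$. Such a submanifold in an indecomposable manifold is forced to be a soul, so uniqueness (i) yields $S_\infty=S_0$. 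Since every Hausdorff subsequential limit equals $S_0$, the full sequence Hausdorff-converges to $S_0$.

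\emph{Step 3 (upgrade to $C^\infty$).} For $n$ large, $S_n$ lies in a fixed tubular neighborhood of $S_0$ and can be written as the image of a section $\sigma_n$ of the normal bundle $\nu S_0$ with $\sigma_n\to 0$ in $C^0$. The condition that $\sigma_n(S_0)$ be totally geodesic for $g_n$ is an elliptic quasilinear system in $\sigma_n$ whose coefficients depend smoothly on $g_n$; elliptic bootstrapping from $C^0$ convergence of $\sigma_n$ and $C^\infty$ convergence of $g_n$ gives $\sigma_n\to 0$ in $C^\infty$, which translates to $S_n\to S_0$ in $\mathcal X(V)$.

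\emph{Main obstacle.} Step 2 is the heart of the matter. Identifying the Hausdorff limit $S_\infty$ with the actual soul $S_0$, as opposed to some larger totally convex subset of $(V,g_0)$ or a lower-dimensional stratum obtained from an over-zealous Sharafutdinov strip, requires the whole soul construction to be continuous under smooth perturbations of the metric. The indecomposability hypothesis is essential here: it simultaneously provides soul uniqueness (i), the intersection property (iii) that confines the $S_n$, and the no-section condition (iv) that prevents dimensional pathologies in the limit. Each of these is used, in an interlocking way, to pin the limit down to $S_0$.
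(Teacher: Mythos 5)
Your outline is genuinely different from the paper's proof, and the difference matters: your Step~2 contains the gap that the paper's argument was built to circumvent.

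The paper also reduces to $C^0$ convergence (via Lemma~\ref{lem: tot geod from 0 to infinity}, an exponential-map argument, simpler than your elliptic bootstrap), but from there it works with \emph{maps}, not sets: the Sharafutdinov retractions $p_j\vert_S\co S\to S_j$ are shown (using~\cite[Lemma 3.1]{BKS-mod1}) to be diffeomorphisms with bi-Lipschitz constants $\to 1$; an Arzel\`a--Ascoli limit $p_\infty$ is an isometry onto its image; after normalizing by $p\circ p_\infty$ one obtains a map $f$ with $p\circ f=\mathrm{id}_S$; and the contradiction (under the assumption that $S_j$ stay away from $S$) is derived from Perelman's flat-strip theorem~\cite{Per-soulconj}, which produces a totally geodesic flat strip forcing $d(p(y),x)=d(y,x)$ and hence $y=p(y)$. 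That flat-strip input is the engine of the proof, and it has no analogue in your proposal.

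Your Step~2 instead extracts a Hausdorff limit $S_\infty$ of the sets $S_n$ and asserts it is a smooth, totally geodesic submanifold of the same dimension as $S_0$, hence a soul. This is exactly the point you flag as the ``main obstacle,'' but the flag is not a proof. A Hausdorff limit of compact totally geodesic submanifolds of a fixed dimension can perfectly well be lower-dimensional (a sequence of tiny round circles limiting to a point), and total convexity of the limit set does not supply the missing regularity or dimension. Invoking (iv) ``to rule out dimension collapse'' is a gesture: (iv) says the normal sphere bundle to a soul has no section, and it is not at all clear how that constrains a Hausdorff limit of sets. Likewise, even granting that $S_\infty$ were a smooth compact totally geodesic submanifold, there is no reason a priori that it must be \emph{the} soul of $g_0$; compact totally geodesic submanifolds of a nonnegatively curved open manifold need not be souls. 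The paper pins the limit down not by structural classification of $S_\infty$ but by exhibiting an explicit geometric contradiction from the flat strip.

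Your Step~1 has a related circularity. You appeal to ``continuity of Busemann functions'' and ``iterated Sharafutdinov stripping'' to confine the souls in a fixed compact set. But continuity of the soul construction under $C^\infty$ perturbation of the metric is precisely what is at issue (the construction involves asymptotic geometry, which the compact-open topology does not control). The paper gets confinement cheaply: by (iii) each $S_j$ meets $S$, and the BKS-mod1 lemma gives uniformly bounded diameters $\mathrm{diam}(S_j,\check g_j)$, so the $S_j$ sit in a fixed compact domain. That is a much shorter and non-circular route to confinement.

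In short: the reduction to $C^0$ and the use of (i)--(iv) are on the right track, but the crucial identification of the limit with the soul requires a concrete geometric mechanism (the flat strip argument of~\cite{Per-soulconj}, fed through the Sharafutdinov map construction), and your proposal has no substitute for it.
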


The proof is a modification of arguments in~\cite{KPT, BKS-mod1}.
What we actually use is the following version of Theorem~\ref{intro-thm: soul cont} in which
the soul is replaced by its tubular neighborhood
whose size depends continuously on the metric. For an indecomposable $V$ 
denote by $i_g$ the normal injectivity radius of a unique soul of 
$g\in {\mathfrak R}_{K\ge 0}(V)$. 
 
\begin{cor} 
\label{cor: injrad cont} 
If $V$ is indecomposable, the map ${\mathfrak R}_{K\ge 0}(V)\to (0,\infty]$ 
that associates $i_g$ to $g$ is continuous, and  
given a continuous function $\sigma\co {\mathfrak R}_{K\ge 0}(V)\to \mathbb R$
with $0<\sigma(g)<i_g$,
the map ${\mathfrak R}_{K\ge 0}(V)\to \mathcal X(V)$ that associates
to $g$ the closed $\sigma(g)$-neighborhood of its soul
is continuous.
\end{cor}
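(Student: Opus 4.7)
The plan is to combine Theorem~\ref{intro-thm: soul cont} with continuous dependence of the normal exponential map on its data. Fix $g_0\in{\mathfrak R}_{K\ge 0}(V)$ with soul $S_0$. By Theorem~\ref{intro-thm: soul cont} the map $g\mapsto S_g$ is continuous into $\mathcal X(V)$, and by local triviality of the principal $\Diff(S_0)$-bundle $\mathrm{Emb}(S_0,V)\to\mathcal X(S_0,V)$, a continuous local section near $S_0$ furnishes, on some neighborhood $\mathcal U$ of $g_0$, a continuous family of embeddings $f_g\co S_0\to V$ with $f_g(S_0)=S_g$ and $f_{g_0}=\mathrm{id}$. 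The $g$-normal bundle $\nu_g$ is the $g$-orthogonal complement of $df_g(TS_0)$ inside $f_g^*TV$, so $\nu_g$ and its normal exponential map $E_g\co \nu_g\to V$ depend continuously on $g\in\mathcal U$ in the smooth compact-open topology on any fixed precompact disk subbundle.

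To show $g\mapsto i_g$ is continuous into $(0,\infty]$, observe that $i_g$ equals the supremum of $r>0$ for which $E_g$ restricted to the open $r$-disk subbundle of $\nu_g$ is an embedding. Lower semicontinuity follows from openness of the embedding condition in the smooth topology on any fixed precompact disk bundle: if $E_{g_0}$ embeds the closed $r$-disk subbundle of $\nu_{g_0}$, the same holds for $E_g$ for $g$ near $g_0$. Upper semicontinuity follows from persistence of either obstruction to injectivity at radius $r$, namely a focal point of norm $\le r$ along a normal geodesic or a pair of unit-speed normal geodesics of length $\le r$ with a common endpoint; both are closed conditions on $g$. Together these give continuity, and the argument also covers the case $i_{g_0}=\infty$ since it shows $i_g>R$ for $g$ near $g_0$ whenever $i_{g_0}>R$.

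For the second statement, given a continuous $\sigma$ with $0<\sigma(g)<i_g$, set $N_g:=E_g(\overline{D^{\sigma(g)}\nu_g})$, the closed $\sigma(g)$-tube around $S_g$; by the bound on $\sigma$, each $N_g$ is a compact codimension-zero submanifold (with boundary) of $V$. Fix the closed unit disk bundle $B$ of $\nu_{g_0}$. For $g\in\mathcal U$, continuously identify $B$ with the closed unit disk bundle of $\nu_g$ via $g_0$-orthogonal projection inside $f_g^*TV$ (an isomorphism for $g$ close to $g_0$, since $\nu_g$ stays transverse to $TS_0$), then rescale radially by $\sigma(g)$, and finally apply $E_g$. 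The resulting map $\Psi_g\co B\to V$ is a smooth embedding with image $N_g$, and $g\mapsto\Psi_g$ is continuous into $\mathrm{Emb}(B,V)$; composing with the quotient $\mathrm{Emb}(B,V)\to\mathcal X(B,V)\subset\mathcal X(V)$ yields the desired continuity of $g\mapsto N_g$.

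The main technical point is arranging the two lifting steps coherently: passing from continuity of the unparametrized soul in $\mathcal X(V)$ to a parametrized family of embeddings $f_g$, and then continuously identifying the varying normal bundles $\nu_g$ with a reference model $\nu_{g_0}$. The first is handled by local triviality of the $\Diff(S_0)$-bundle, the second by the orthogonal complement description of $\nu_g$ inside $f_g^*TV$ and orthogonal projection near $g_0$. Once these are in place, every remaining claim reduces to standard continuous dependence of the exponential map on the Riemannian metric over a compact domain.
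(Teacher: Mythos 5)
Your approach is genuinely different from the paper's. The paper deduces continuity of $g\mapsto i_g$ by citing Lemma~\ref{lem: norm inj rad cont}, whose proof works with the distance-function characterization $i_g=\sup\{t: d(\gamma(t),S)=t \text{ for all normal geodesics }\gamma\}$ and the Klingenberg-type dichotomy (either a focal point or a geodesic that hits $S$ orthogonally at both ends), and then simply declares the second conclusion ``immediate.'' You instead trivialize the soul map locally via a section of the principal $\Diff(S_0)$-bundle, package the normal bundles and exponential maps into a continuous family, and analyze $i_g$ as the threshold radius where $E_g$ ceases to be an embedding. The upshot of your approach is a genuinely explicit proof of the second conclusion (the tube map lands in $\mathcal X(V)$ via a continuous family of embeddings of a fixed reference disk bundle), which the paper omits; this is a real gain.

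There is, however, a gap in the first half. Your lower-semicontinuity argument is fine: embedding a fixed compact disk bundle is an open condition in $C^\infty$, so $\{g: i_g>r\}$ is open. But for upper semicontinuity you need $\{g: i_g<r\}$ to be open, i.e.\ the \emph{persistence} of an obstruction under perturbation, and you assert this because the obstruction set is ``closed.'' Closedness of the obstruction set (equivalently, closedness of $\{g: i_g\le r\}$) is exactly lower semicontinuity again, not upper; what you actually need is \emph{openness} of the obstruction set, which is far from automatic. Concretely, a focal point with all Jacobi fields having higher-order tangency (the analogue of $x\mapsto x^3$ at $0$) can in principle disappear under perturbation; and while a transverse self-intersection pair $(v_1,v_2)$ with $E_{g_0}$ immersive at both does persist by the implicit function theorem, you would still have to rule out the degenerate case where the only obstruction at norm $i_{g_0}$ is a non-persistent singularity. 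The paper's Lemma~\ref{lem: norm inj rad cont} sidesteps this by working with $t\mapsto d(\gamma(t),S)$, which passes to limits with no transversality issues: if $d(\gamma(I-\e),S)<I-\e$ for $g$, then the same strict inequality holds for nearby metrics and nearby normal geodesics, immediately forcing $i_g<I-\e$ there. To make your argument airtight you should either replace the ``persistence'' claim by this distance-function argument, or supply a proof that the first obstruction (either the first focal point via the Morse index theorem, or a self-intersection via the degenerate limit argument) cannot vanish under a $C^\infty$-small perturbation.
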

\begin{proof} Continuity of $g\to i_g$ follows from
Theorem~\ref{intro-thm: soul cont} and Lemma~\ref{lem: norm inj rad cont} below.
The other conclusion is immediate from Theorem~\ref{intro-thm: soul cont}.
\end{proof}

\section{Continuity of souls for indecomposable manifolds}
\label{sec: indec}

Throughout this section we assume that $V$ is indecomposable. 
Let us first justify the claims (i)--(iv) of Section~\ref{sec: geom ingr}. 

If a metric $g\in {\mathfrak R}_{K\ge 0}(V)$ has 
two distinct souls, then by a result of Yim~\cite{Yim-pseudosoul}
the souls are contained in an embedded submanifold, the union
of {pseudosouls\,}, that is  
diffeomorphic to $\mathbb R^l\times S$ where $l>0$, where
any soul is of the form $\{v\}\times S$. In particular,
the normal bundle to any soul of $g$ has a nowhere zero section,
so $V$ cannot be indecomposable. This implies (i).

The claim (ii) is proved in Lemma 3.1 and Remark 3.2 of~\cite{BKS-mod1}
building on an argument in~\cite{KPT}.

To prove (iii) and (iv) consider two vector bundles $\xi$, $\eta$
with closed manifolds as bases and diffeomorphic total spaces.
The associated unit sphere bundles
$S(\xi)$, $S(\eta)$ are fiber homotopy equivalent, see~\cite[Proposition 5.1]{BKS-mod1}.
By the covering homotopy property a homotopy section of a 
fiber bundle is homotopic to a section; thus having a section is a property 
of the fiber homotopy type. Hence if $\xi$
has a nowhere zero section, then so does $\eta$. 
If the zero sections of $\xi$, $\eta$ are disjoint in their common total space,
then the zero section of $\eta$ gives rise to a homotopy
section of $S(\xi)$, and hence to a nowhere zero section of $\xi$. 
These remarks imply (iii) and (iv).

\begin{proof}[Proof of Theorem~\ref{intro-thm: soul cont}] 
Since ${\mathfrak R}_{K\ge 0}(V)$ is metrizable, it suffices to show that
if the metrics $g_j$ converge to $g$ in ${\mathfrak R}_{K\ge 0}(V)$,
then their (unique) souls converge in $\mathcal X(Q, V)$.
Let $S_j$, $S$ be souls of $g_j$, $g$, respectively.
By Lemma~\ref{lem: tot geod from 0 to infinity} below
it suffices to show that $S_j$ converges to $S$ in the $C^0$ topology.
Arguing by contradiction pass to a subsequence such that each $S_j$
lies outside some $C^0$ neighborhood of $S$.
Let $p_j$, $p$ denote the Sharafutdinov retractions
onto $S_j$, $S$ for $g_j$, $g$,
 and let $\check g_j$, $\check g$ denote the metric on 
$S_j$, $S$ induced by $g_j$, $g$, respectively. 
By~\cite[Lemma 3.1]{BKS-mod1} 
$p_j\vert_S\co S\to S_j$ is a diffeomorphism for all large $j$,
and the pullback metrics $(p_j\vert_S)^*\check g_j$ converge to 
$\check g$ in the $C^0$ topology. In particular,
the diameters of $\check g_j$ are uniformly bounded.
Note that each $S_j$ intersects $S$ else $p_j\vert S$ would give rise
to a nowhere zero section of the normal bundle to $S$. Let $U$ be a compact domain in $V$
such that the interior of $U$ contains the closure of $\cup_j S_j\cup S$. 

The embedding $p_j\vert_S\co (S, \check g)\to (V, g)$ can be written as
the composition of $\mathrm{id}\co (S,\check g)\to (S, (p_j\vert_S)^*\check g_j)$,
the isometric embedding of $(S, (p_j\vert_S)^*\check g_j)$ 
onto a convex subset of $(V, g_j)$, and $\mathrm{id}\co (V, g_j)\to (V, g)$.
Recall that $C^0$ convergence
of metrics implies Gromov-Hausdorff, and hence Lipschitz convergence.
Hence the above identity map of $S$ has bi-Lipschitz constants
approaching $1$ as $j\to\infty$. Also there are compact domains
$U_j$ in $V$ and homeomorphisms $(U_j, g_j)\to (U,g)$ 
that converge to the identity and have 
bi-Lipschitz constants approaching $1$,
and hence the same is true for $p_j\vert_S\co (S, \check g)\to (V, g)$. 

By the Arzel{\`a}-Ascoli theorem $p_j\vert_S$ subconverge to 
$p_\infty\co (S, \check g)\to (V,g)$, which 
is an isometry onto its image (equipped with the metric obtained by restricting
the distance function of $g$).
Compactness of $S$ implies that $p_\infty$ is homotopic to $p_j$ for large $j$.
Since $p$ is $1$-Lipschitz
map $(V, g)\to (S,\check g)$ that is homotopic to the identity of $V$, we conclude that
$p\circ p_\infty$ is a $1$-Lipschitz homotopy self-equivalence of $(S,\check g)$. 
Homotopy self-equivalences of closed manifolds are surjective, so
$p\circ p_\infty$ is surjective, and hence compactness of $S$ implies that $p\circ p_\infty$
is an isometry. 

Set $f=p_\infty\circ (p\,\circ p_\infty)^{-1}$. Then $f(S)=p_\infty(S)$ and
$p\,\circ f$ is the identity of $S$. 
Note that $f(S)$ and $S$ intersect, else  
$f$ would give rise to a section of the normal sphere bundle to $S$. 
Fix $x\in f(S)\cap S$. Since every $S_j$ lies outside a $C^0$ neighborhood of 
$S$, there is $y\in f(S)\setminus S$. Let $u$ be a unit vector at
$p(y)$ that is tangent to a segment from $p(y)$ to $y$. Parallel translate $u$
along a segment joining $x$ and $p(y)$. By~\cite{Per-soulconj} this vector 
field exponentiate to an embedded flat totally geodesic strip, where 
$x$ lies on one side of the strip and $y$, $p(y)$ lie on the other side.
Finally, $d(p(y), x)=d(y, x)$ contradicts $y\neq p(y)$.
\end{proof}

\begin{lem} 
\label{lem: tot geod from 0 to infinity}
Given $k\in [1,\infty]$, let $g_i$ be a sequence of complete
Riemannian metrics on a manifold $M$ that $C^k$-converge 
on compact sets to a metric $g$. 
Suppose $S_i$, $S$ are totally geodesic compact submanifolds 
of $(M, g_i)$, $(M, g)$, respectively. 
If $S_i$ converges to $S$ in $C^0$-topology,
then it converges in $C^{k-1}$-topology.
\end{lem}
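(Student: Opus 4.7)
The claim is local on $S$: by compactness of $S$ it suffices to establish $C^{k-1}$-convergence in a neighborhood of each $p\in S$. The $C^0$-convergence hypothesis lets us choose $p_i\in S_i$ with $p_i\to p$ in $M$. The plan is then twofold: (a) show that the tangent planes $T_{p_i}S_i$ converge to $T_pS$ in the Grassmannian of $s$-planes in $TM$ (where $s=\dim S$), using the totally geodesic hypothesis critically; (b) parametrize $S_i$ near $p_i$ by the $g_i$-exponential of a disk in $T_{p_i}S_i$ and invoke regularity of the geodesic ODE in its coefficients to promote $C^0$ to $C^{k-1}$ convergence.

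For (a), extract a subsequence along which $T_{p_i}S_i$ converges to some $s$-plane $\Sigma\subset T_pM$. For any $v\in\Sigma$, pick $v_i\in T_{p_i}S_i$ with $v_i\to v$; totally geodesy gives $\gamma_i(t):=\exp^{g_i}_{p_i}(tv_i)\in S_i$ for small $t$. Since the geodesic ODE has coefficients (Christoffel symbols of $g_i$) converging at least in $C^0$ on compact sets, continuous dependence yields $\gamma_i\to\gamma$ uniformly on compact $t$-intervals, where $\gamma(t)=\exp^g_p(tv)$. The $C^0$-convergence $S_i\to S$ then forces $\gamma(t)\in S$ for all small $t$, so $v=\gamma'(0)\in T_pS$. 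Hence $\Sigma\subseteq T_pS$, and equality of dimensions gives $\Sigma=T_pS$. As the limit is the same along every subsequence, $T_{p_i}S_i\to T_pS$.

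For (b), choose $g_i$-orthonormal frames $L_i\co\R^s\to T_{p_i}S_i$ converging to a $g$-orthonormal frame $L$ of $T_pS$, and set
\[
\psi_i(x)=\exp^{g_i}_{p_i}(L_ix),\qquad \psi(x)=\exp^g_p(Lx)
\]
on a small disk $B\subset\R^s$. By totally geodesy, $\psi_i(B)\subset S_i$; since $d\psi_i(0)=L_i$ is an isomorphism with uniform bounds as $i\to\infty$, the inverse function theorem gives that, for some fixed small $B$ and all large $i$, $\psi_i$ is a diffeomorphism from $B$ onto an open neighborhood of $p_i$ in $S_i$. Because $g_i\to g$ in $C^k$ on compact sets, the Christoffel symbols converge in $C^{k-1}$, and standard regularity of solutions of the geodesic ODE in its coefficients, basepoint, and initial velocity yields $\exp^{g_i}_{p_i}\to\exp^g_p$ in $C^{k-1}$ on a fixed neighborhood of $0\in T_pM$. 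Together with $L_i\to L$, this gives $\psi_i\to\psi$ in $C^{k-1}(B,M)$, which is the claimed $C^{k-1}$-convergence of $S_i$ to $S$ near $p$; a finite covering of $S$ by such neighborhoods finishes the proof.

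The main obstacle is step (a): converting $C^0$-proximity of sets into $C^0$-proximity of tangent planes is precisely where the totally geodesic hypothesis is indispensable, via the ``geodesic test'' that a tangent vector exponentiates to a curve that must remain in $S_i$. Once tangent planes are known to converge, the $C^{k-1}$-improvement in (b) is a routine application of parameter-dependence for ODEs.
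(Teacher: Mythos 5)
Your proposal is correct and follows essentially the same strategy as the paper: both proofs exploit that a totally geodesic submanifold is locally the exponential image of a linear subspace $L_i\subset T_{p_i}M$, and both reduce the $C^{k-1}$-improvement to (i) convergence of the linear subspaces $L_i\to L$, and (ii) $C^{k-1}$-convergence $\exp_{g_i}\to\exp_g$, which is standard dependence of the geodesic ODE on its coefficients. The one place where the implementations genuinely differ is step (i): the paper pulls the $C^0$-close sets $S_i$, $S$ back through $\exp_g^{-1}$ (using that $\exp_{g_i}$ and $\exp_g$ are $C^0$-close) to see that $L_i$ and $L$ are $C^0$-close as subsets, then invokes the elementary fact that $C^0$-close linear subspaces are automatically $C^\infty$-close; you instead establish Grassmannian convergence of tangent planes by a ``geodesic test'' (a subsequential limit direction exponentiates to a geodesic that must lie in $S$, hence the limit plane is contained in $T_pS$, and a dimension count gives equality). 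Both arguments are sound; the paper's is a bit more direct since it avoids the subsequence extraction and the separate verification that a limit of points of $S_i$ lies in $S$, while yours makes the role of the totally geodesic hypothesis slightly more vivid. The remaining $C^{k-1}$-promotion via ODE regularity is the same in both.
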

\begin{proof} Fix $x\in S$ and pick $r$ such that
$\exp_g\vert_x$, $\exp_{g_i}\vert_x$ are diffeomorphisms 
on the $2r$-ball centered at the origin of $T_x M$ for all
sufficiently large $i$.
Since $S_i$, $S$ are totally geodesic, they are equal to the
images under $\exp_{g_i}$, $\exp_g$ of some linear subspaces
$L_i$, $L$ of $T_{x_i} M$, $T_x M$, respectively, where $x_i$ is near $x$. 
Since $k\ge 1$, the maps $\exp_{g_i}$, $\exp_g$ are $C^0$-close, so that
$C^0$-closeness of $S_i$, $S$ implies that
$\exp_g(L_i)$ is \mbox{$C^0$-close} to $S$ in $B_g(x,r)$. 
Thus $L_i, L$ are $C^0$-close in the $r$-disk
tangent bundle over $B(x,r)$, but then they must be 
$C^\infty$-close because $C^0$-close linear subspaces
are $C^\infty$-close. Thus  $\exp_g(L_i)$, $\exp_g(L)=S$ 
are $C^\infty$-close in $B(x,r)$. Since
$\exp_{g_i}$ is $C^{k-1}$-close to $\exp_g$, we conclude that
$S_i$ is $C^{k-1}$-close to $\exp_g(L_i)$, and hence to $S$.
\end{proof}

The following lemma generalizes the well-known fact that
the injectivity radius depends continuously on a 
point of a Riemannian manifold.

\begin{lem}
\label{lem: norm inj rad cont}
Let $g_j$ be a sequence of Riemannian metrics on a manifold $M$
that converges smoothly to a Riemannian metric $g$. Let $S_j\to S$
be a smoothly converging sequence of compact boundaryless submanifolds of $M$.
Then normal injectivity radii of $S_i$ in $(M, g_i)$ 
converge to the normal injectivity radius of $S$ in $(M,g)$.
\end{lem}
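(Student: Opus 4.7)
The plan is to prove continuity by establishing both upper and lower semicontinuity separately, using the classical characterization
\[
\inj_\nu(S,g)\ =\ \min\bigl(F(S,g),\ \tfrac{1}{2}\ell(S,g)\bigr),
\]
where $F(S,g)$ is the focal radius of $S$ in $(M,g)$ (the infimum of $t>0$ such that some Jacobi field along a unit-speed normal geodesic $t\mapsto \exp_g(tv)$ with shape-operator boundary data vanishes at time $t$), and $\ell(S,g)$ is the length of a shortest nontrivial geodesic segment in $(M,g)$ whose endpoints lie on $S$ and which meets $S$ orthogonally at both endpoints. This decomposition is standard in submanifold geometry; granted it, it suffices to show that $F$ and $\ell$ each depend continuously on $(S,g)$ under the assumed smooth convergence.

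For continuity of $F$ I would observe that the relevant Jacobi equation (along unit-speed normal geodesics leaving $S$) and its boundary condition (involving the shape operator of $S$) depend smoothly on $(S,g)$, so for each unit normal vector the first focal time varies continuously with the data. Compactness of the unit normal sphere bundle of $S$ upgrades this pointwise continuity to continuity of the infimum $F(S,g)$.

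For continuity of $\ell$ I would first establish a uniform positive lower bound $\ell(S_j,g_j)\geq c$ for all large $j$, by choosing $c>0$ so that $\exp^\nu_g$ is a diffeomorphism on the $2c$-disk bundle of $\nu_g S$ (possible by the tubular neighborhood theorem for the fixed pair $(S,g)$) and transferring this, via the smooth convergence hypothesis, to a uniform $c$-disk-bundle statement for $\exp^\nu_{g_j}$; this forces any perpendicular segment for $(S_j,g_j)$ of length less than $c$ to be trivial. Upper semicontinuity of $\ell$ then follows by perturbing a minimizing perpendicular segment for $(S,g)$ via the implicit function theorem applied to the orthogonality conditions at the two endpoints. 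Lower semicontinuity follows from Arzel\`a--Ascoli: minimizing unit-speed perpendicular segments $\gamma_j$ for $(S_j,g_j)$ have lengths bounded above (by upper semicontinuity) and below (by $c$), so they lie in a compact family of geodesic segments in a compact region of $M$, and any subsequential limit is a perpendicular segment for $(S,g)$ of length $\lim \ell(S_j,g_j)$.

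The main obstacle will be the bootstrap for the uniform lower bound on $\ell(S_j,g_j)$ without invoking continuity of $\inj_\nu$ itself; the sketch above circumvents this potential circularity by extracting the bound solely from the tubular neighborhood theorem for the single limit pair $(S,g)$ together with stability of local diffeomorphisms under smooth perturbation of $(S,g)$.
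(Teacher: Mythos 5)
Your decomposition $\inj_\nu(S,g)=\min\bigl(F(S,g),\tfrac12\ell(S,g)\bigr)$ is the correct dichotomy, and the paper invokes it too --- but only in one direction, to rule out $\liminf i_{g_j}<i_g$. For the other direction the paper argues by contradiction from the distance characterization of $i_g$ as the supremum of $t$ such that $d(\gamma(t),S)=t$ for every unit-speed perpendicular geodesic $\gamma$ leaving $S$: if $\limsup i_{g_j}>i_g$ one finds a perpendicular $g$-geodesic $\gamma$ with $d(\gamma(t_0),S)<t_0$ for some $t_0<\limsup i_{g_j}$, approximates it by perpendicular $g_j$-geodesics $\gamma_j$ which necessarily satisfy $d_j(\gamma_j(t_0),S_j)=t_0$ for large $j$, and passes to the limit to reach a contradiction. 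This sidesteps any discussion of upper semicontinuity of $F$ or $\ell$.

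That is precisely where your argument has a gap. To conclude continuity of $\inj_\nu$ from the $\min$ decomposition you need \emph{upper} semicontinuity of both $F$ and $\ell$, not just lower. For $\ell$ you propose the implicit function theorem applied to the orthogonality conditions at the endpoints of a minimizing perpendicular segment, but that requires the segment to be a nondegenerate critical point of the relevant functional, which you have not ensured and which can fail. For $F$, the convergence of Jacobi fields gives lower semicontinuity transparently, but upper semicontinuity of the first focal time (that a focal point of the limit metric cannot simply vanish under perturbation) is a genuinely subtler point that your sketch does not address. The rest of your outline --- the uniform positive lower bound on $\ell(S_j,g_j)$ via the tubular neighborhood theorem and $C^1$-openness of embeddings, lower semicontinuity of $\ell$ via Arzel\`a--Ascoli on minimizing perpendicular segments, and lower semicontinuity of $F$ from Jacobi field convergence --- matches the geometric content of the paper's argument for the lower semicontinuity case and is sound.
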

\begin{proof}[Sketch of the Proof] 
Denote by $i_{g_j}$, $i_{g}$ the normal injectivity radii of $S_j$ in $(M, g_j)$,
and $S$ in $(M,g)$, respectively.  
Recall that $i_g$ equals the
supremum of all $t$ such that $d(\g(t), S)=t$ for every unit speed geodesic 
$\g$ with $\g(0)\in S$ and $\g^\prime(0)$  orthogonal to $S$.
 
Arguing by contradiction suppose  $i_{g_j}$ does not converge to $i_g$.
By passing to a subsequence we can assume that $i_{g_j}\to I\in [0,\infty]$ and $I\ne i_g$.
A standard rescaling argument implies that $I>0$.
We shall only treat the case $I<\infty$; the case $I=\infty$ is similar.
Then there is an $\e>0$ such that either $I-\e>i_g$ or $I+\e<i_g$.

If $I-\e>i_g$, there is a 
unit speed geodesic $\g$ starting on $S$ and orthogonal to $S$ at $\g(0)$ such that $d(\g(I-\e), S)<I-\e$.
This geodesic is the limit of 
unit speed $g_j$-geodesics $\g_j$ starting on $S_j$. 
Since $i_{g_j}$ tends to $I>I-\e$ we get $d_j(\g_j(I-\e),S_j)=I-\e$ for all large $j$.
The distance functions $d_j(\cdot, S_j)$ converge to $d(\cdot, S)$, hence passing to the limit gives 
$d(\g(I-\e), S)=I-\e$. This contradiction rules out the case $I-\e>i_g$.

Assume $I+\e<i_g$. Smooth convergence of metrics implies convergence
of Jacobi fields, so the $g_j$-focal radii of $S_j$ are $>I+\e$
for large $j$. Then by the well-known dichotomy there is a $g_j$-geodesic $\gamma_j$
of length $2i_{g_j}$ starting and ending on $S_j$ which is orthogonal to $S_j$ at end points. Geodesics $\gamma_j$ subconverge to a $g$-geodesic
 of length $2I$ orthogonal to $S$ at the endpoints. Therefore $i_g\le I$ which is a contradiction.
\end{proof}

\begin{ex} 
\label{ex: indecomp}
Here are some examples of indecomposable manifolds.

(1) If the normal bundle to a soul has nonzero Euler class with $\mathbb Z$ or $\mathbb Z_2$
coefficients, then $V$ is indecomposable because Euler class is an obstruction
to the existence of a nowhere zero section.

(2)  
The simplest method to produce open complete manifolds of $K\ge 0$ is to
start with a compact connected Lie group $G$ with a bi-invariant metric, 
a closed subgroup $H\le G$, and a representation $H\to O_m$ so that 
the Riemannian submersion metric on the quotient $(G\times\mathbb R^m)/H$ 
is a complete metric of $K\ge 0$ with soul $(G\times\{0\})/H$.
Any $G$-equivariant Euclidean vector bundle over $G/H$ is isomorphic
to a bundle of this form: the representation is given by 
the $H$-action on the fiber over $eH$. 
This applies to the tangent bundle $T(G/H)$
with the Euclidean structure induced by the $G$-invariant Riemannian metric on $G/H$.
When $G/H$ is orientable we conclude that  $T(G/H)$ is indecomposable if and only if 
$G/H$ has nonzero Euler characteristic (because for orientable $\mathbb R^n$ bundles
over $n$-manifolds the Euler class is the only obstruction to the existence of a nowhere zero
section).

(3)
To be indecomposable the normal bundle to a soul
need not have a nontrivial Euler class.
For example, all $\mathbb R^3$ bundles over $S^4$, $S^5$, $S^7$ admit
a complete metric of $K\ge 0$, see~\cite{GroZil}
and so do many $\mathbb R^3$ bundles over $CP^2$~\cite{GroZil-lift}. 
Their Euler classes lie in $H^3(\,\text{base}\,;\mathbb Z)=0$, yet
their total spaces are often indecomposable:

(3a) Nontrivial rank $3$ bundles over $S^n$, $n\ge 3$ do not have
a nowhere zero section else the bundle splits as a Whitney sum of a bundle of ranks $1$ and $2$ which 
must be trivial. Thus all nontrivial rank $3$ bundles over $S^4$, $S^5$, $S^7$
have indecomposable total spaces. 

(3b) By~\cite{DolWhi} oriented isomorphism classes of rank $3$ vector bundles over $CP^2$ 
are in a bijection via $(w_2, p_1)$
with the subset of \[
H^2(CP^2;\mathbb Z_2)\times HP^4(CP^2)\cong \mathbb Z_2\times\mathbb Z\]
given by the pairs $(0, 4k)$, $(1,4l+1)$, $k,l\in \mathbb Z$,
and such a bundle has a nowhere zero section if and only if $p_1$ is a square of the integer that
reduces to $w_2$ mod $2$. It follows from~\cite[Theorem 3]{GroZil-lift} 
that the total space of such a bundle is indecomposable
with three exceptions: $k$ is odd, $k$ is a square, or $l$ is the product
of two consecutive integers.

(3c)  
According to~\cite[Corollary 3.13]{GroZil} there are $88$ oriented isomorphism classes
of $\mathbb R^4$ bundles over $S^7$ that admit complete metrics of $K\ge 0$.
If the total space of such a bundle is not indecomposable, 
then it is the Whitney sum of an $\mathbb R$ bundle
and an $\mathbb R^3$ bundle (as any $\mathbb R^2$ bundle over $S^7$ is trivial).
Since there are only $12$ oriented isomorphism classes of $\mathbb R^3$ bundles 
over $S^7$, we conclude that there are at least
$76$ oriented isomorphism classes of $\mathbb R^4$ bundles over $S^7$ 
with indecomposable total spaces. In fact there are precisely 76 such bundles
because the inclusion $SO(3)\to SO(4)$ is injective on homotopy groups.
Similarly,~\cite[Proposition 3.14]{GroZil} implies that there are 
$2$ oriented isomorphism classes
of $\mathbb R^4$ bundles over $S^5$ with indecomposable total spaces.

(4) The product of any indecomposable manifold with a closed manifold of $K\ge 0$
is indecomposable. Indeed, suppose $V$ is indecomposable with a soul $S$
and $B$ is closed. If $V\times B$ were not indecomposable,
then the normal bundle to $S\times B$ in $V\times B$ would have a nowhere zero section.
Restricting the section to a slice inclusion $S\times\{\ast\}$ 
gives a section of the normal bundle of $S$ in $V$.
\end{ex} 

\begin{rmk}
The product of indecomposable manifolds need not be indecomposable. 
Indeed, let $\xi$, $\eta$ be oriented nontrivial rank two bundle over $S^2$, $RP^2$
classified by Euler classes in $H^2(S^2;\mathbb Z)\cong\mathbb Z$
and $H^2(RP^2;\mathbb Z)\cong\mathbb Z_2$. Suppose $e(\xi)[S^2]$ is even. 
Then the Euler class of $\xi\times\eta$
equals $e(\xi\times\eta)=e(\xi)\times e(\eta)$, which vanishes 
because the cross product is bilinear. By dimension reasons 
the Euler class is the only obstruction to the existence of a nowhere zero section
of $\xi\times\eta$, so the total space of $\xi\times\eta$ is not indecomposable.
In view of $(1)$ above in order to show that $\xi$, $\eta$ have indecomposable total spaces 
it is enough to give them complete metrics of $K\ge 0$. The case of $\xi$ is well-known:
Any plane bundle over $S^2$ can be realized as $(S^3\times\mathbb R^2)/S^1$,
see (2) above, so it carries a complete metric of $K\ge 0$. 
To prove the same for $\eta$ we shall identify it with
the quotient of $S^2\times\mathbb R^2$ by the involution
$i(x,v)=(-x, -v)$ which is isometric in the product of the constant curvature metrics. 
The quotient can be thought of 
$\gamma\oplus\gamma$ where $\gamma$ is the canonical line bundle over $RP^2$, so 
its total Stiefel-Whitney class equals 
$(1+w_1(\gamma))^2=1+w_1(\gamma)^2\ne 1$. Thus $\gamma\oplus\gamma$ 
is orientable and nontrivial, and hence it
is isomorphic to $\eta$ which 
is the only orientable nontrivial plane bundle over $RP^2$.
\end{rmk}

\section{Topological restrictions on indecomposable manifolds}
\label{sec: top ingr}

In this section we prove Theorem~\ref{thm-intro: main}.
Let $V=\Int N$ be an indecomposable manifold.
Fix a homeomorphism $\rho\co (0,\infty]\to (0,1]$ with $\rho(s)<s$, e.g. 
$\rho(s):=\frac{s}{s+1}$.

Fix an arbitrary metric $h\in {\mathfrak R}_{K\ge 0}(V)$
with soul $S_h$ of
normal injectivity radius $i_h$. 
By a slight abuse of notation
we identify $N$ with the $\rho(i_h)$-neighborhood of $S_h$. 
Let $\theta_h$ be the orbit map
of a metric $h\in {\mathfrak R}_{K\ge 0}(V)$ under the pullback (left) action of
$\Diff V$ given by $\theta_h(\phi)\!:=\!\phi^{-1*}h$; we sometimes denote 
$\phi^{-1*}h$ by $h_\phi$.  
Consider the diagram
\[
{\scriptsize
\xymatrix{
&\ast\simeq\Diff(V, \mathrm{rel}\, N)\ar[r]&
\Diff V\ar[d]
\ar[r]^{\theta_h\quad}&
{\mathfrak R}_{K\ge 0}(V)
\ar[d]^\delta
\\
\Omega\mathcal X(N, V) 
\ar[r]^{\Omega f}&
\Diff N\ar[r]&
\mathrm{Emb}(N, V)\ar[r]^{q}& 
\mathcal X(N, V)\ar[r]^f& 
B\Diff N
}}
\] 
The map $q$ takes an embedding to its image. Note that $q$ is a principal bundle~\cite{GV},
and $f$ denotes its classifying map. 

The leftmost vertical arrow is given by restricting to $N$, which is
a fiber bundle due to the parametrized isotopy extension
theorem. Its fiber over the inclusion $\Diff(V, \mathrm{rel}\, N)$ is contractible
by the Alexander trick towards infinity. (The fibers over other components
of $\mathrm{Emb}(N, V)$ might not be contractible, but we will only work
in the component of the inclusion).

The map $\delta$ taking $g$ to the closed $\rho(i_g)$-neighborhood
of $S_g$ is continuous by Corollary~\ref{cor: injrad cont}. 

The above diagram commutes because the isometry $\phi\co (V, h)\to (V, h_\phi)$ 
takes the $\rho(i_h)$-neighborhood
of $S_h$ to the $\rho(i_{h_\phi})$-neighborhood of $S_{h_{\phi}}$.

Let $\pi_k(\theta_h)$ be the homomorphism induced by $\theta_h$ on the $k$th homotopy groups
based at the identity map of $V$, and similarly, let $\pi_k(q)$, $\pi_k(f)$,
$\pi_k(\Omega f)$ be the induced maps of homotopy groups based at inclusions.
With these notations the commutativity of the diagram  implies that
$\im \pi_k(q)$ is a quotient of a subgroup of $\im\pi_k(\theta_h)$.

In the bottom row of the diagram every two consecutive maps form a fibration, up to homotopy. 
This gives isomorphisms $\im \pi_k(q)\cong\ker \pi_k (f)\cong\ker \pi_{k-1}(\Omega f)$ for each $k\ge 1$.

A collar neighborhood of $\d N$ defines the inclusion
$\iota_N\co P(\d N)\to\Diff N$ 
extending a pseudoisotopy on the collar neighborhood 
by the identity outside the neighborhood. 
In Theorem~\ref{thm: pseudoisot is loop space}  below
we identify the homomorphisms
$\pi_{k-1}(\Omega f)$ and $\pi_{k-1}(\iota_N)$ for each $k\ge 2$, where $\pi_{k-1}(\iota_N)$ 
is the map induced by $\iota_N$ on the $(k-1)$th homotopy group with identity maps as the basepoints. 

In summary, $\ker\pi_{k-1}(\iota_N)$ is quotient of a subgroup of $\im\pi_k(\theta_h)$
for $k\ge 2$, which completes the proof of Theorem~\ref{thm-intro: main}.

\begin{rmk} For $k=0$ and $1$ the proof of Theorem~\ref{thm-intro: main} shows that
$\im \pi_k(q)$ is a quotient of a subgroup of $\im\pi_k(\theta_h)$.
\end{rmk}

{\bf Notation:} If $\pi_j(X)$ is abelian, we let $\pi_j^\Q(X):=\pi_j(X)\otimes\Q$ and denote 
the dimension of this rational vector space by $\dim\pi_j^\Q(X)$.

\begin{rmk}
Tensoring with the rationals immediately implies that under the assumptions of
Theorem~\ref{thm-intro: main} any subspace of
$\im\pi_k^\Q(q)$ embeds into $\im\pi_k^\Q(\theta_h)$. 
\end{rmk}

\begin{rmk}
One may hope to use Theorem~\ref{thm-intro: main} to produce
infinitely generated subgroups of $\im\pi_k(\theta_h)$.
This is somewhat of an illusion because  
$\ker\pi_{k-1}(\iota_N)$ is a finitely generated abelian group 
if $\pi_1(\d N)$ is finite and $\max\{2k+7, 3k+4\}<\dim N$.
Indeed, 
$\ker\pi_{k-1}(\iota_N)$ can be identified with a subgroup of $\pi_{k+1}A(\d N)$,
see (\ref{form: waldhausen}) below,
which is finitely generated~\cite{Dwy, Ben}.
Note that all known computations of $\ker\pi_{k-1}(\iota_N)$
are in the above stability range. 
\end{rmk}

\begin{rmk}
An integral cohomology class is called 
{\it spherical\,} if it does not vanish on the image of the Hurewicz homomorphism.
In many of our examples of indecomposable $V$ the normal bundle to the soul has spherical Euler class,
which forces the soul of any metric in ${\mathfrak R}_{K\ge 0}(V)$
to have infinite normal injectivity radius by a result of 
Guijarro-Schick-Walschap~\cite{GSW}. For such $V$ the proof of Theorem~\ref{thm-intro: main}
simplifies: we need not consider $\rho$ or $i_g$, and instead
can let $\delta(g)$ be the $1$-neighborhood of $S_g$ and identify $\delta(h)$
with $N$.
\end{rmk}

\begin{rmk}
\label{rmk: iota Dn}
The map $\i_{_{D^n}}$ is injective for all homotopy groups.
Indeed, by Theorem~\ref{thm: pseudoisot is loop space} the map $f$ 
in the diagram below is a delooping of $\i_{_{D^n}}$
provided both maps are restricted to the identity components.
The leftmost horisontal arrow is given by precomposing with the inclusion,
the downward arrow is the inclusion, and the slanted arrow is their composition
\begin{equation*} 
\xymatrix{
O(n)\ar[rd]\ar[d]
&&&
\\
\Diff D^n\ar[r] & \mathrm{Emb}(D^n, \mathbb R^n)\ar[r]^{\ q} 
&\mathcal X(D^n,\mathbb R^n)\ar[r]^f & B\Diff D^n
}
\end{equation*}
The slanted arrow is a homotopy equivalence:
deform an embedding $e$ so that it fixes $0$
via $t\to te(x)+(1-t)e(0)$, then deform it to the 
its differential at $0$ via
$s\to\frac{e(sx)}{s}$, 
and finally apply a deformation retraction $GL(n,\mathbb R)\to O(n)$. 
Hence the left bottom arrow has a section which makes $q$ trivial on the
homotopy groups, so by exactness $f$ is injective on homotopy groups.
\end{rmk}

\section{Pseudoisotopy spaces, stability, and involution}
\label{sec: pi}

A {\it pseudoisotopy} of a compact smooth manifold $M$ is a diffeomorphism
of $M\times I$ that is the identity on a neighborhood of $M\times \{0\}\cup \d M\times I$.
Pseudoisotopies of $M$ form a topological group $P(M)$.
Let $P_{\mbox{\scriptsize$\hspace{-.4pt}\partial\hspace{-.4pt}$}}(M)$ denote the topological subgroup 
of $P(M)$ consisting of diffeomorphisms
of $M\times I$ that are the identity on a neighborhood of $\d (M\times I)$.

Igusa in~\cite{Igu-stability} discussed a number of inequivalent definitions
of pseudoisotopy, e.g. a pseudoisotopy is often defined as
a diffeomorphism of $M\times I$ that restricts to the 
identity of $M\times \{0\}\cup \d M\times I$. Igusa in~\cite[Chapter 1, Proposition 1.3]{Igu-stability} 
establishes a weak homotopy equivalence of pseudoisotopy spaces arising 
from various definitions, and in particular, the inclusion
\begin{equation*}
\label{form: pi inclusion diff}
P(M)\to\Diff(M\times I, \mathrm{rel}\, M\times \{0\}\cup \d M\times I)
\end{equation*}
is a weak homotopy equivalence.
The co-domain of the inclusion 
is homotopy equivalent to a CW complex;
in fact for any compact manifold $L$ with boundary and any closed subset
$X$ of $L$, the space $\Diff(L,\mathrm{rel}\, X)$ is a Fr{\'e}chet manifold~\cite[Lemma 4.2(ii)]{Yag}
and hence is homotopy equivalent to a CW complex~\cite[Lemma 2.1]{Yag}.
By contrast, we do not know if $P(M)$ is homotopy equivalent to a CW complex
which necessitates some awkward arguments in Section~\ref{sec: pseudo and sp of submfl}.

Defining a pseudoisotopy as an element of $P(M)$ is convenient
for our purposes because it allows for easy gluing: {\it 
A codimension zero embedding  of closed manifolds $M_0\to M$
induces a continuous homomorphism $P(M_0)\to P(M)$ given by extending
a diffeomorphism by the identity on $(M\setminus M_0)\times I$.}
Similarly, the map $\iota_N$ defined in the introduction is a continuous homomorphism.

By Igusa's stability theorem~\cite{Igu-stability} the stabilization map 
\begin{equation}
\label{form: stab map}
\Sigma\co P(M)\to P(M\times I)
\end{equation}
is $k$-connected if $\dim M\ge \max\{2k+7, 3k+4\}$. 
Thus the iterated stabilization 
is eventually a $\pi_i$-isomorphism for any given $i$.
The {\it stable pseudoisotopy space} $\mathscr P(M)$ is the
direct limit $\displaystyle{\lim_{m\to\infty} P(M\times I^m)}$.

It is known, see the proof of~\cite[Proposition 1.3]{Hat-surv},  
that $\mathscr P(-)$ is a functor from the category of compact manifolds and 
continuous maps to the category of topological spaces and homotopy classes of continuous maps.
Also homotopic maps $M\to M^\prime$ 
induce the same homotopy classes $\mathscr P(M)\to \mathscr P(M^\prime)$. Every $k$-connected map
$M\to M^\prime$ induces a $(k-2)$-connected map 
$\mathscr  P(M)\to \mathscr P(M^\prime)$~\cite[Theorem 3.5]{Igu-postn-pi}.

The space $P(M)$ has an involution given by $f\to \bar f$, where
\[
\bar f(x,t)=r(f(f^{-1}(x,1), 1-t))
\quad\text{and}\quad r(x,t)=(x,1-t),
\] see~\cite[p.296]{Vog}.
We write the induced involution of $\pi_i P(M)$ as $x\to \bar x$.

Since $P(M)$ is a topological group, 
the sum of two elements in $\pi_i P(M)$ is represented
by the pointwise product of the representatives of the 
elements~\cite[Corollary 1.6.10]{Spa-book}.
Hence the endomorphism of $\pi_i P(M)$ induced by the
map $f\to f\circ \bar f$ is given by $x\to x+\bar x$.

Note that the image of the map $f\to f\circ \bar f$
lies in $P_{\mbox{\scriptsize$\hspace{-.4pt}\partial\hspace{-.4pt}$}}(M)$. It follows that any element $x+\bar x\in \pi_i P(M)$
is in the image of the inclusion induced homomorphism
$\pi_i P_{\mbox{\scriptsize$\hspace{-.4pt}\partial\hspace{-.4pt}$}}(M)\to\pi_i P(M)$ for if $f$ represents $x$, then $x+\bar x$ is represented by 
$f\circ \bar f$.
For future use we record the following lemma.

\begin{lem}
\label{lem: involution}
Let $M$ be a compact manifold with boundary,  
let $i$ be an integer with $\dim (M)\ge \max\{2i+9, 3i+7\}$, and
let $\eta_i^m$ be the endomorphism of $\pi_i^\Q P(M\times I^m)$ 
induced by the map $f\to f\circ\bar f$.\vspace{-2pt}
\begin{enumerate}
\item[\textup{(1)}]
If $x\in\pi_i P(M)$ has infinite 
order, then  
$x+\bar x\in \pi_i P_{\mbox{\scriptsize$\hspace{-.4pt}\partial\hspace{-.4pt}$}}(M)$ and \vspace{2pt} 
\newline $\Sigma x+\overline{\Sigma x}\in\pi_i P_{\mbox{\scriptsize$\hspace{-.4pt}\partial\hspace{-.4pt}$}}(M\times I)$ 
cannot both have finite order.
\vspace{2pt}
\item[\textup{(2)}]
$\pi_i^\Q{\mathscr P}(M)$ embeds into $\im \eta_i^m\oplus \im \eta_i^{m+1}$.
In particular, there is $\e\in\{0,1\}$ such that 
$2\dim \im \eta_i^{m+\e}\ge \dim \pi_i^\Q{\mathscr P}(M)$.
\end{enumerate}
\end{lem}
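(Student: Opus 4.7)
The plan rests on a single algebraic identity between the involution and stabilization. Let $\tau_m$ denote the involution on $\pi_i^\Q P(M\times I^m)$ induced by $f\mapsto\bar f$. Because $P(M\times I^m)$ is a topological group and $(f\circ\bar f)_*=f_*+\bar f_*$ in $\pi_i$, we have $\eta_i^m=\mathrm{id}+\tau_m$; hence $\im\eta_i^m=V_+^m$, the $(+1)$-eigenspace of $\tau_m$, and rationally $\pi_i^\Q P(M\times I^m)=V_+^m\oplus V_-^m$.

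The single technical input I would use is the \emph{sign flip} under stabilization: rationally on $\pi_i$,
\[
\tau_{m+1}\circ\Sigma_*=-\Sigma_*\circ\tau_m,
\]
equivalently $\Sigma(V_\pm^m)\subseteq V_\mp^{m+1}$. This anti-commutation reflects the fact that $\bar f$ reverses the ``last'' pseudoisotopy coordinate, whereas $\Sigma$ inserts a new coordinate and reassigns which one plays that role; the resulting sign appears at the rational $\pi_i$ level. Such a sign flip is a standard feature of the involution on (stable) pseudoisotopy in the style of Hatcher-Wagoner, Vogell, and Weiss-Williams, and verifying it in exactly this form is, I expect, the main technical obstacle.

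For (1) I argue by contradiction. If both $x+\bar x$ and $\Sigma x+\overline{\Sigma x}$ have finite order, then rationally $\bar x=-x$ and $\overline{\Sigma x}=-\Sigma x$. The sign flip then gives $\overline{\Sigma x}=-\Sigma\bar x=-\Sigma(-x)=\Sigma x$, forcing $2\Sigma x=0$ rationally, so $\Sigma x$ is torsion. Iterating $\Sigma$, the class $\Sigma^N x$ is rationally zero for every $N\ge 1$, and by Igusa's stability theorem the iterated stabilization is eventually a rational $\pi_i$-isomorphism under the hypothesized dimension bound; therefore $x$ itself must be torsion, contradicting infinite order.

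For (2), let $W_\pm\subseteq\pi_i^\Q\mathscr P(M)$ denote the eigenspaces of the stable involution, and choose $m$ in Igusa's stable range so that $\pi_i^\Q P(M\times I^m)\to\pi_i^\Q\mathscr P(M)$ is an isomorphism of $\Q$-vector spaces. Iterating the sign flip, $\tau_m$ is carried under this isomorphism to $(-1)^m$ times the stable involution, so $V_+^m$ maps isomorphically onto $W_+$ or $W_-$ according to the parity of $m$, and $V_+^{m+1}$ onto the opposite summand. Hence
\[
\im\eta_i^m\oplus\im\eta_i^{m+1}\;\cong\;W_+\oplus W_-\;=\;\pi_i^\Q\mathscr P(M),
\]
yielding the asserted embedding (in fact an isomorphism for such $m$). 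The ``in particular'' clause is pigeonhole: at least one of the two summands must have $\Q$-dimension $\ge\tfrac12\dim\pi_i^\Q\mathscr P(M)$, giving the required $\e\in\{0,1\}$.
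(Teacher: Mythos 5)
Your proposal is correct and takes essentially the same route as the paper: both parts hinge on the anti-commutation $\overline{\Sigma x}=-\Sigma\bar x$ of the involution with stabilization (which the paper simply cites to Hatcher's survey, Appendix I) together with the $\pm 1$-eigenspace decomposition of $\pi_i^\Q P(M\times I^m)$ under the involution and Igusa's stability. The paper's version of (2) omits the ``stable involution'' and the parity bookkeeping you introduce, instead showing directly that $\Sigma(\ker\eta_i^m)\cap\ker\eta_i^{m+1}=0$, but the underlying algebra is identical.
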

\begin{proof}
(1) The map $f\to \bar f$ homotopy anti-commutes 
with the stabilization map (\ref{form: stab map}), as proved in~\cite[Appendix I]{Hat-surv}. 
By assumption $i$ is below Igusa's stability range so $\Sigma$ is a $\pi_i$-isomorphism, and
$\pi_i P(M)$ contains an infinite order element $x$.
Then either $x+\bar x$ or $\Sigma x+\overline{\Sigma x}$
has infinite order for otherwise
\[
2\Sigma x=\Sigma x+\overline{\Sigma x}+\Sigma x-\overline{\Sigma x}=
%\Sigma x+\overline{\Sigma x}+\Sigma x+\Sigma{\bar x}=
\Sigma x+\overline{\Sigma x}+\Sigma (x+{\bar x})
\]
would have finite order, contradicting $\pi_i$-injectivity of $\Sigma$.

(2) 
Let $\Sigma\ker \eta_i^m$ denote the image of $\ker \eta_i^m$ under 
the $\pi_i^\Q$-isomorphism induced by $\Sigma$.
The intersection of $\ker  \eta_i^{m+1}$ and $\Sigma\ker \eta_i^m$ 
is trivial, for if $x=-\bar x$ and $\Sigma x=-\overline{\Sigma x}$, 
then $\Sigma x=-\Sigma \bar x=\overline{\Sigma x}$
so that $\Sigma x=0$. Thus $\ker \eta_i^m$ injects into $\im  \eta_i^{m+1}$,
and the claim follows by observing that 
$\pi_i^\Q{\mathscr P}(M)\cong\ker \eta_i^m\oplus\im \eta_i^m$.
\end{proof}

\section{Pseudoisotopies and the space of submanifolds}
\label{sec: pseudo and sp of submfl}

Let $\Diff_0 M$, $P_0(M)$ denote the identity path-components of $\Diff M$, $P(M)$, respectively.
Given a submanifold $X$ of $Y$ let $\mathrm{Emb}_0⁡(X,Y)$ denote the 
component of the inclusion in the space of embeddings of $X\to Y$, and let
$\Omega_0\mathcal X(X, Y)$
be the component of the constant loop based at the inclusion.

If $f\co E\to B$ is a continuous map and $E_f\to B$ is the corresponding
standard fibration with a fiber $F$, then {\it the associated homotopy fiber map\,}
$F\to E$ is the composition of the inclusion $F\to E_f$ with 
the standard homotopy equivalence $E_f\to E$.

\begin{thm}\label{thm: pseudoisot is loop space}
Let $M$ be a compact manifold with nonempty boundary.
Suppose $U$ is obtained by attaching $\partial M\times [0,1)$
to $M$ via the identity map of the boundary. 
Let $l\co \Omega\mathcal X(M, U)\to \Diff M$ be the homotopy fiber map
associated with the map $\Diff M\to \mathrm{Emb}⁡(M,U)$ given by postcomposing
diffeomorphisms with the inclusion.
Then there is a weak homotopy equivalence $\phi\co \Omega_0\mathcal X(M, U)\to P_0(\partial M)$
such that $\iota_M\circ\phi$ is homotopic to the restriction of $l$ to $\Omega_0\mathcal X(M, U)$.
\end{thm}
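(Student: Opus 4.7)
The plan is to replace the non-compact target $U$ by a compact codimension-zero submanifold and then recognize the resulting setup as a standard pseudoisotopy fibration. First I would set
\[
M^+ := M \cup_{\partial M} (\partial M \times [0, \tfrac{1}{2}]) \subset U,
\]
a compact codimension-zero submanifold diffeomorphic to $M$, whose complement in $U$ is an open collar of $\partial M^+$. Compactly supported isotopies of $U$ pushing this collar inward yield, via a parametric isotopy-extension argument, weak homotopy equivalences $\mathrm{Emb}_0(M, M^+) \simeq \mathrm{Emb}_0(M, U)$ and $\mathcal X_0(M, M^+) \simeq \mathcal X_0(M, U)$ compatible with the $\Diff M$-actions. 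Hence $l$ is, up to these equivalences, the homotopy fiber map of the postcomposition $\Diff M \to \mathrm{Emb}_0(M, M^+)$, and the theorem reduces to the compact setting.

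Next, parameterized isotopy extension provides a locally trivial fibration
\[
\Diff(M^+, \mathrm{rel}\, M) \longrightarrow \Diff M^+ \longrightarrow \mathrm{Emb}_0(M, M^+)
\]
based at the inclusion $M \hookrightarrow M^+$. Its fiber is identified with $P(\partial M)$: a diffeomorphism of $M^+$ that fixes $M$ pointwise restricts to a diffeomorphism of $M^+ \setminus \mathrm{int}(M) \cong \partial M \times [0, \tfrac{1}{2}]$ that is the identity near $\partial M \times 0$, and after rescaling $[0, \tfrac{1}{2}] \approx I$ this is exactly an element of $P(\partial M)$.

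To match the fiber inclusion with the specific $\iota_M$ from the introduction, I would choose a diffeomorphism $\rho\co M \to M^+$ whose restriction to the fixed collar of $\partial M$ in $M$ used to define $\iota_M$ stretches this collar onto the double collar of $\partial M$ in $M^+$, namely the union of the fixed inner collar with the added half-collar, while $\rho$ is arranged to agree with the obvious identification elsewhere. Conjugation $\rho_*(\psi) := \rho^{-1}\psi\rho$ gives a topological group isomorphism $\Diff M^+ \cong \Diff M$, and by the explicit form of $\rho$ it carries the fiber inclusion $\Diff(M^+, \mathrm{rel}\, M) \cong P(\partial M) \hookrightarrow \Diff M^+$ to $\iota_M\co P(\partial M) \to \Diff M$, since a pseudoisotopy supported in the added half-collar conjugates under $\rho$ to the extension-by-identity of the same pseudoisotopy through the fixed collar. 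The fibration thus pulls back to a fiber sequence $P(\partial M) \xrightarrow{\iota_M} \Diff M \to \mathrm{Emb}_0(M, M^+)$, whose homotopy fiber at the inclusion is weakly equivalent to $P(\partial M)$ with homotopy fiber map $\iota_M$. Combined with the identification $\Omega \mathcal X \simeq \mathrm{hofib}(\Diff M \to \mathrm{Emb})$ and the reduction in the first paragraph, inversion yields the desired weak homotopy equivalence $\phi\co \Omega_0 \mathcal X(M, U) \to P_0(\partial M)$ on identity components such that $\iota_M \circ \phi \simeq l$.

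The main technical obstacle is the compatibility step above: one must arrange $\rho$ and the associated parameterizations so that the transported fiber inclusion is the \emph{specific} map $\iota_M$ fixed in the introduction, not merely a homotopic alternative. This requires carefully matching the collar conventions used to define $\iota_M$, to construct $M^+$, and to define the stretching $\rho$. A secondary technical point is the reduction in the first paragraph, where the easy set-theoretic deformation of $U$ onto $M^+$ must be realized parametrically on the embedding and submanifold spaces; this uses a standard but non-trivial parametric isotopy-extension argument adapted to the open manifold $U$, together with the fact that the $\Diff M$-actions on both sides commute with the deformation.
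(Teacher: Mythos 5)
Your overall strategy---recognizing the homotopy fiber as a pseudoisotopy space via the parametrized isotopy extension theorem---is the same as the paper's, but there are two genuine gaps. First, the fiber of the fibration $\Diff M^+\to\mathrm{Emb}_0(M,M^+)$ is $\Diff(M^+,\rel M)$, the diffeomorphisms fixing $M$ \emph{pointwise}; your assertion that after rescaling "this is exactly an element of $P(\partial M)$" is false, since an element of $P(\partial M)$ must equal the identity on a \emph{neighborhood} of $\partial M\times\{0\}$. These two spaces are not equal; the inclusion of the smaller one is only a weak homotopy equivalence (the paper cites Igusa, Chapter 1, Proposition~1.3 for exactly this). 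And to produce the map $\phi$ \emph{landing in} $P_0(\partial M)$, one must invert that weak equivalence. Since $P(\partial M)$ is not known to be homotopy equivalent to a CW complex, this requires the CW-approximation argument using the fact that the diffeomorphism space is a Fr\'echet manifold, hence an ANR; the paper calls this out explicitly ("we do not know if $P(M)$ is homotopy equivalent to a CW complex, which necessitates some awkward arguments") and carries it out via the map $h_r$. Your proposal silently skips all of this.

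Second, the transition between the theorem's map ($\Diff M\to\mathrm{Emb}_0(M,M^+)$, postcomposition with the inclusion) and your fibration ($\Diff M^+\to\mathrm{Emb}_0(M,M^+)$, restriction) is never established, and the conjugation $\rho_*$ does not do it: for $\psi\in\Diff M^+$ one has, on $M$, $\psi\ne\psi\circ\rho$ in general, so no natural map on the $\mathrm{Emb}$ side makes the relevant square commute. The paper sidesteps this by \emph{shrinking} to a core $M_0\subset M$ rather than enlarging to $M^+\supset M$: the restriction $r\co\Diff_0 M\to\mathrm{Emb}_0(M_0,\Int M)$ and the postcomposition $i\co\Diff_0 M\to\mathrm{Emb}_0(M,U)$ have the same source and literally agree after further restriction to $M_0$, so the square with $s$ and $j$ commutes on the nose and the homotopy fibers of $i$ and $r$ can be compared. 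Without an analogue of that commutative square, your argument does not actually relate $\Omega_0\mathcal X(M,U)$ to the fiber of your fibration, and the final "pulls back to a fiber sequence $P(\partial M)\xrightarrow{\iota_M}\Diff M\to\mathrm{Emb}_0(M,M^+)$" is unjustified.
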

\begin{proof}
Let $M_0$ be the complement of an open collar of $\d M$ in $M$. 
Consider the following commutative diagram:
\[
\xymatrix{
\Diff_0(M) \ar[r]^i
\ar[d]_r &  \mathrm{Emb}_0(M,U)\ar[d]_s  \\
\mathrm{Emb_0}(M_0,\Int  M)\ar[r]_j& \mathrm{Emb}_0(M_0,U).
}
\]
Here $r$ and $s$ are given by restriction to $M_0$, while $i$ and $j$
is induced by precomposing with the inclusion $M\hookrightarrow U$, and postcomposing with the inclusion 
$\Int \, M \hookrightarrow U$. 

First we show that $s$ is a homotopy equivalence.
Let us factor the restriction $\Diff_0 ⁡U\to \mathrm{Emb}⁡_0(M_0,U)$ as
the restriction $\Diff_0 ⁡U\to \mathrm{Emb}⁡_0(M,U)$
followed by $s$.
By the parametrized isotopy extension theorem~\cite{Pal-iso-ext, Cer-iso-ext} 
the above restrictions are fiber bundles with fibers $\Diff_0(U, \mathrm{rel}\, M_0)$,
$\Diff_0(U, \mathrm{rel}\, M)$, respectively. The fibers 
are contractible by the Alexander trick towards infinity, 
so $s$ is a homotopy equivalence. 

The map $j$ is also a homotopy equivalence. Note that
the space of smooth embeddings of a compact manifold into an open manifold
is an ANR because it is an open subset of a Fr{\'e}chet manifold of all smooth maps
between the manifolds. Hence the domain and codomain of $j$
are homotopy equivalent to CW complexes and
it suffices to show that $j$ is a weak  homotopy equivalence. 
This easily follows from the existence of an isotopy of $U$ that pushes a given compact subset
into $\Int M$, e.g. given a map $S^k\to \mathrm{Emb}_0(M_0,U)$
based at the inclusion we can use the isotopy to push 
the adjoint $S^k\times M_0\to U$ of the above map 
into $\Int M$ relative to the inclusion, so $j$
is $\pi_k$-surjective, and injectivity is proved similarly.

By the parametrized isotopy extension theorem 
the map $r$ is a fiber bundle, 
and its fiber $F_r$ over the inclusion 
equals the space of diffeomorphisms of $M\setminus\Int (M_0)$
that restrict to the identity of $\d M_0$ and lie in $\Diff_0 M$.
The inclusion \begin{equation}
\label{form: incl is whe}
P(\partial M)\cap \Diff_0 M\to F_r
\end{equation} 
is a weak homotopy equivalence~\cite[Chapter 1, Proposition 1.3]{Igu-stability}.
The space $F_r$ is a Fr{\'e}chet manifold,
see~\cite[Lemma 4.2(ii)]{Yag}, hence it is an ANR. 
Therefore the CW-approximation theorem
gives a weak homotopy equivalence \[
h_r\co F_r\to P(\partial M)\cap \Diff_0 M
\]
whose composition with the inclusion (\ref{form: incl is whe})
is homotopic to the identity of $F_r$.

Since $s$ and $j$ are homotopy equivalences, 
the homotopy fibers $F_i$, $F_r$ of $i$, $r$ are homotopy equivalent, i.e. 
there is a homotopy equivalence $h\co F_i\to F_r$ 
which together with the homotopy fiber maps $f_i\co F_i\to\Diff_0 M$, $f_r\co F\to\Diff_0 M$
forms a homotopy commutative triangle. 
This gives homotopies $\iota_M\circ h_r\circ h\sim f_r\circ h\sim f_i$.

Look at the map of fibration sequences
\[
{\footnotesize\xymatrix{
\Omega\Diff_0 ⁡M \ar[r]\ar[d]& \Omega\mathrm{Emb}⁡_0(M,U)\ar[r]\ar[d]&
F_i\ar[r]_{\!\!\!\!f_i}\ar[d]_g &\Diff_0 ⁡M \ar[r]\ar[d]& \mathrm{Emb}⁡_0(M,U)\ar[d]\\
\Omega\Diff M \ar[r]& \Omega\mathrm{Emb}⁡(M,U)\ar[r]&
\Omega\mathcal X(M, U)\ar[r]^l &\Diff ⁡M \ar[r]& \mathrm{Emb}(M,U)
}}
\]
where the maps in the rightmost and the leftmost squares are inclusions, 
and $g$ is the associated map of homotopy fibers.
The two rightmost vertical arrows are inclusions of path-components.
Hence the unlabeled vertical arrows induce $\pi_k$-isomorphisms
for $k>0$, and so does $g$ by the five lemma.
The space $\mathcal X(M, U)$ is a Fr{\'e}chet manifold~\cite{GV}, and hence
its loop space is homotopy equivalent to a CW complex~\cite{Mil-cw}.
Thus the restriction of $g$ to the identity component is a homotopy
equivalence whose homotopy inverse we denote by $g^\prime$.
For the map $\phi:=h_r\circ h\circ g^\prime$ we have homotopies
$\iota_M\circ\phi\sim f_i\circ g^\prime\sim 
l\circ g\circ g^\prime\sim l\vert_{\Omega_0\mathcal X(M, U)}$ as claimed.                                                                                                                                                                                                                                                                                  \end{proof}

\begin{rmk} We do not know if the groups 
$\pi_0 P(\d M)$, $\pi_0\Omega\mathcal X(M, U)$ are isomorphic.
Theorem~\ref{thm: pseudoisot is loop space} implies that 
any two path-components of  $P(\d M)$, $\Omega\mathcal X(M, U)$
are weakly homotopy equivalent. 
(If $X$ is an $H$-space whose
$H$-multiplication induces a group structure on $\pi_0(X)$, then 
all path-components of $X$ are homotopy equivalent. This applies
to topological groups and loop spaces.)
\end{rmk}

\section{Rational homotopy of the pseudoisotopy space}
\label{sec: rat hom pi}

In this section we review how to compute $\pi_*^\Q\mathscr P(M)$, 
work out the cases when $M$ is $S^n$, $HP^d$, $S^4\times S^4$, $S^4\times S^7$, and explain 
that every \mbox{$2$-connected} rational homotopy equivalence
induces an isomorphism on $\pi_*^\Q\mathscr P(-)$.

It turns out that if $M$ is simply-connected,
the computation of $\pi_*^\Q\mathscr P(M)$ reduces to a problem in the rational homotopy theory.

There is a fundamental relationship between $\mathscr P(M)$ and
the Waldhausen algebraic $K$-theory $A(M)$.
For our purposes a definition of $A(-)$ is not important, 
and it is enough to know that 
$A(-)$ is a functor 
from the category of continuous maps of
topological spaces into itself, see~\cite{Wal-PSPM78}. Let $A_f\co A(X)\to A(Y)$ 
denote a map induced by a map $f\co X\to Y$.
For each $i\ge 0$ there is a natural isomorphism
\begin{equation}
\label{form: waldhausen}
\pi_{i+2}A(M)\cong \pi_{i+2}^S(M_+)\oplus\pi_i\mathscr P(M).
\end{equation}
\vspace{-10pt}

This result was envisioned in works of Hatcher and Waldhausen
in 1970s, and a complete proof has finally appeared in~\cite[Theorem 0.3]{WJR}, 
where the notations are
somewhat different, see~\cite[section 1.15]{Rog-lec} and~\cite[p.227]{HsiSta}
for relevant background.

Here $\pi_{i+2}^S(M_+)$ is the $(i+2)$th stable homotopy group of the disjoint union
of $M$ and a point, which after tensoring with the rationals becomes
naturally isomorphic to the homology of $M$, i.e. 
$\pi_{i+2}^S(M_+)\otimes\mathbb Q\cong H_{i+2}(M;\mathbb Q)$,
see e.g.~\cite[section 20.9]{tDi}.

Dwyer~\cite{Dwy} showed that if $X$ is simply-connected and each 
$\pi_i(X)$ is finitely generated, then each $\pi_i A(X)$ is finitely generated.
Since compact simply-connected manifolds have finitely generated
homotopy groups~\cite[Corollary 9.6.16]{Spa-book}, it follows from
(\ref{form: waldhausen}) that $\mathscr P(M)$ have finitely generated homotopy
groups for each compact simply-connected manifold $M$.

The constant map $M\to \ast$ induces retractions $\mathscr P(M)\to\mathscr P(*)$ and
$A(M)\to A(\ast)$, which
give isomorphisms:
\vspace{3pt}
{\scriptsize\begin{equation}
\label{form: A and P reduced}
\pi_i\mathscr P(M)\cong\pi_i\mathscr P(*)\oplus\pi_i(\mathscr P(M), \mathscr P(*))
\qquad 
\pi_i A(M)\cong\pi_i A(\ast)\oplus \pi_{i} (A(M), A(\ast )).
\end{equation}}
\vspace{-10pt}

Waldhausen computed the rational homotopy groups of $A(\ast)$, the
algebraic $K$-theory of a point~\cite[p.48]{Wal-PSPM78}, which gives
\small\begin{equation}
\label{form: P(pt)}
\pi_q^\Q\mathscr P(*)\cong \pi_{q+2}^\Q A(\ast)= \left\{
  \begin{array}{l l}
    \mathbb Q & \quad \text{if $q\equiv 3\,(\,\mathrm{mod}\,4\,)$}\\
    0 & \quad \text{else}
  \end{array} \right. 
\end{equation}\normalsize
\vspace{-5pt}

Thus the Poincar\'e series of $\pi_*^\Q\mathscr P(*)$ is
$t^3(1-t^4)^{-1}$. Recall that the {\it Poincar\'e series} of a 
graded vector space $\underset{i}{\oplus} W_i$
is {\small $\displaystyle{\sum_i t^i\dim W_i}$}.

The Poincar\'e series of $\pi_*^\Q(\mathscr P(M), \mathscr P(*))$,
where $M=S^k$ with $k>1$, was computed in~\cite{HsiSta} as 
\begin{equation}
\label{form: P(even sphere)}
\frac{t^{3n-4}}{1-t^{2n-2}}\quad\text{if $M=S^n$ where $n\ge 2$ is even},
\end{equation}
\begin{equation}
\label{form: P(odd sphere)}
\frac{t^{4n-5}}{1-t^{2n-2}}\quad\text{if $M=S^{2n-1}$ where $n\ge 2$ is an integer}.
\end{equation}
More precisely,~\cite[pp 227-229]{HsiSta} gives the Poincar\'e series of $\pi_*A(S^k)$
and (\ref{form: P(even sphere)})-(\ref{form: P(odd sphere)}) 
is obtained from the series by subtracting the Poincar\'e series for $H_*(S^k;\mathbb Q)$ 
and $\pi_*^\Q A(\ast)$, and shifting dimensions by two.

The range of spaces $X$ for which $\pi_*^\Q A(X)$  is readily computable
was greatly extended after the discovery of a 
connection between $\pi_*^\Q A(X)$  
and $HC_*(X;\mathbb Q)$, the rational cyclic homology,
see~\cite{Goo}, and references therein. 

By~\cite[Theorem V.1.1]{Goo-cyc-hom} or~\cite[Theorem A]{BurFie} 
there is a natural isomorphism between $HC_*(X;\mathbb Q)$ and the {\it equivariant
rational homology\,} $H^{S^1}_*(LX;\mathbb Q)$. The latter
is defined as $H_*(LX\times_{S^1} ES^1;\mathbb Q)$, where
$LX\times_{S^1} ES^1$ is the Borel construction and
$LX$ is free loop space of $X$, i.e. the space
of continuous maps $S^1\to X$ with the compact-open topology.
Note that $LX$ comes with the circle action by pre-composition, and
the post-composition with a
continuous map $f\co X\to Y$ induces the $S^1$-equivariant continuous map 
$L_f\co LX\to LY$. 

The free loop space of a point is a point, so 
$H^{S^1}_*(\ast;\mathbb Q)=H_*(BS^1)$.
The map $X\to \ast$ induces a retraction $LX\times_{S^1} ES^1\to\ast\times_{S^1} ES^1=BS^1$, which
gives an isomorphism:
\vspace{3pt}
{\small
\begin{equation}
\label{form: equiv reduced}
H_i^{S^1}(LX;\mathbb Q)\cong H_i^{S^1}(\ast;\mathbb Q)\oplus
H_{i}^{S^1}(LX, \ast;\mathbb Q)
\end{equation}}
\vspace{-10pt}

In many cases $H^{S^1}_*(LX;\mathbb Q)$ can be computed due to
\begin{itemize}
\item the K\"unneth formula for rational cyclic homology $HC_*(X;\mathbb Q)$ of~\cite{BurFie};
\item a Sullivan minimal model for $LX\times_{S^1} ES^1$ developed in~\cite{VPBur} 
for any simply-connected $X$ such that $\dim \pi_i^\Q(X)$ is finite for every $i$.
\end{itemize}

To state a result in~\cite{Goo} we need a notation. 
Given a  functor $F$
that associates to a continuous map $g\co X\to Y$
a sequence of linear maps of rational vector spaces 
$g_i\co F_i(X)\to F_i(Y)$ indexed by $i\in\mathbb N$, we 
let $F_i(g)$ denote a rational vector space that fits into an exact sequence
\begin{equation}
\label{form: long exact seq}
\dots \longrightarrow F_i(X)\overset{g_i}{\longrightarrow} F_i(Y)\longrightarrow 
F_i(g)\longrightarrow F_{i-1}(X)\overset{g_{i-1}}{\longrightarrow}\dots
\end{equation}
so that $F_i(g)$ is isomorphic to direct sum of 
$\mathrm{ker}\,g_{i-i}$ and $F_i(Y)/\mathrm{Im}\,g_i$.
We apply the above when $F_i$ is the rational homotopy $\pi_i^\Q(-)$ or 
equivariant rational homology $H_{i}^{S^1}(-;\mathbb Q)$, while
$g$ is $A_f$ or $L_f$, respectively. In particular, in these notations 
$\pi_i^\Q(g)=0$
for all $i\le k$ if and only if $g$ is rationally $k$-connected.

Goodwillie proved in~\cite[p.349]{Goo}
that any $2$-connected continuous map $f\co X\to Y$ gives rise to an isomorphism
for all $i$
\begin{equation}
\label{form: goodwillie's iso}
\pi_i^\Q(A_f)\cong H_{i-1}^{S^1}(L_f;\mathbb Q).
\end{equation}

Waldhausen proved that if $f$ is $k$-connected with $k\ge 2$, then so is $A_f$, 
see~\cite[Proposition 2.3]{Wal-PSPM78}, and (\ref{form: goodwillie's iso})
gives a rational version of this result:

\begin{cor}
\label{cor: A and rational he}
If $f$ is $2$-connected and rationally $k$-connected,
then so is $A_f$.
\end{cor}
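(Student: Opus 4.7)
The plan is to combine Waldhausen's integral theorem with Goodwillie's isomorphism (\ref{form: goodwillie's iso}) and then propagate rational connectedness through the free loop space functor via two Serre spectral sequence comparisons.

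The case $k \le 2$ already follows from Waldhausen's theorem quoted in the paper, so I assume $k \ge 3$. That theorem gives $\pi_i^\Q(A_f) = 0$ for $i \le 2$, and for $3 \le i \le k$ Goodwillie's isomorphism (\ref{form: goodwillie's iso}) identifies $\pi_i^\Q(A_f)$ with $H_{i-1}^{S^1}(L_f;\Q)$. Thus it suffices to show $H_j^{S^1}(L_f;\Q) = 0$ for $2 \le j \le k-1$.

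To pass from equivariant to ordinary rational homology I would use the Borel fibration $LZ \to LZ \times_{S^1} ES^1 \to BS^1$ for $Z = X, Y$. Since $BS^1$ is simply connected, the Serre $E_2$-page splits as $H_p(BS^1;\Q) \otimes H_q(LZ;\Q)$, and $L_f$ induces a morphism of spectral sequences that is the identity on the base. A standard spectral sequence comparison therefore reduces the task to proving $H_*(L_f;\Q) = 0$ for $* \le k-1$, i.e., that $L_f$ is a rational $(k-1)$-equivalence on ordinary homology.

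For this last step I would exploit the evaluation fibrations $\Omega X \to LX \to X$ and $\Omega Y \to LY \to Y$. Since $f$ is $2$-connected it is an isomorphism on $\pi_1$; to keep the exposition short I assume $X, Y$ are simply connected (the general case reduces to this by passing to universal covers). The $E_2$-pages then split as $H_p(Z;\Q) \otimes H_q(\Omega Z;\Q)$. Rational $k$-connectedness of $f$, together with the rational relative Hurewicz theorem, gives $H_*(f;\Q) = 0$ for $* \le k$; applying the same principle after looping yields $H_*(\Omega f;\Q) = 0$ for $* \le k-1$. A bidegree inspection shows that $f_* \otimes (\Omega f)_*$ is an isomorphism on $E_2^{p,q}$ for $p+q \le k-2$ and a surjection for $p+q = k-1$, so Zeeman's comparison theorem delivers $H_*(L_f;\Q) = 0$ in degrees $\le k-1$.

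The main obstacle is simply the careful bookkeeping of connectivity shifts through each step (looping, the Borel construction, and Goodwillie's identification) and verifying that the rational Hurewicz hypotheses are met; once $X, Y$ are assumed simply connected this is routine. No new ideas beyond the classical rational Hurewicz theorem, Serre spectral sequence comparison, and the input (\ref{form: goodwillie's iso}) are required.
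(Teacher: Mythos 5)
Your overall strategy is sound and arrives at the same inputs as the paper — Goodwillie's isomorphism (\ref{form: goodwillie's iso}) and the evaluation and Borel fibrations — but the route through them is different, and one step as written is a genuine gap.

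The gap is the parenthetical reduction to simply connected $X$, $Y$ ``by passing to universal covers.'' Free loop spaces do not interact well with covers: $L\widetilde{X}$ is not the universal cover of $LX$ (indeed $LX$ has one path-component per conjugacy class in $\pi_1 X$), so this reduction does not go through. More locally, if $X$ is not simply connected the Serre spectral sequence for $\Omega X \to LX \to X$ has nontrivial local coefficients and you cannot split $E_2 = H_*(X)\otimes H_*(\Omega X)$. The hypothesis ``$f$ is $2$-connected'' is there precisely to sidestep this, but not in the way you use it. The paper's observation is that $2$-connectedness of $f$ forces its homotopy fiber $F$ to be \emph{simply connected}, and then one works entirely with $F$: the homotopy fiber of $L_f$ (and of its Borel construction) is $LF$, which by the evaluation fibration $\Omega F\to LF\to F$ is rationally $(k-2)$-connected, giving $H_{i-1}^{S^1}(L_f;\Q)=0$ for $i\le k$ immediately. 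No hypothesis on $\pi_1(X)$ or $\pi_1(Y)$ is needed and no spectral sequence comparison is required.

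Modulo that point, the two Serre-spectral-sequence comparisons you run (first over $\Omega Z\to LZ\to Z$, then over $LZ\to LZ\times_{S^1}ES^1\to BS^1$, each via Zeeman's comparison theorem) are a more elaborate way of propagating connectivity, and they do give the same answer in the simply connected case. The paper's homotopy-fiber argument buys you the general case and is considerably shorter. Your separate treatment of $k\le 2$ via Waldhausen's integral connectivity theorem is harmless but unnecessary, since the fiber argument works uniformly.
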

\begin{proof} 
If $F$ is a homotopy fiber of $f$, then $LF$ is a homotopy fiber of $L_f$, 
see~\cite[Theorem 5.125]{Str-book}. It is easy  to see
that $LF$ is also the homotopy fiber of the map
$LX\times_{S^1} ES^1\to LY\times_{S^1} ES^1$ induced by $L_f$.
By assumption $F$ is rationally $(k-1)$-connected, so the homotopy
exact sequence of the evaluation fibration $\Omega F\to LF\to F$ shows that
$LF$ is rationally $(k-2)$-connected. This implies that $H_{i-1}^{S^1}(L_f;\mathbb Q)=0$ for $i\le k$,
which proves the lemma thanks to (\ref{form: goodwillie's iso}).
\end{proof}

\begin{cor}
\label{cor: pi and rational he}
Any $2$-connected rationally $k$-connected map of simply-connected compact
manifolds induces an isomorphism on $\pi_i^\Q\mathscr P(-)$
for $i< k-2$ and an epimorphism for $i=k-2$.
\end{cor}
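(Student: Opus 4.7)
The plan is to derive Corollary~\ref{cor: pi and rational he} from Corollary~\ref{cor: A and rational he} via the naturality of the Waldhausen splitting (\ref{form: waldhausen}). First I would apply Corollary~\ref{cor: A and rational he} to the given map $f\co M\to M'$ to conclude that the induced map $A_f\co A(M)\to A(M')$ is itself $2$-connected and rationally $k$-connected, i.e.\ $\pi_j^\Q(A_f)=0$ for all $j\le k$. Unravelling the long exact sequence (\ref{form: long exact seq}), this translates to the statement that $\pi_j^\Q A(M)\to\pi_j^\Q A(M')$ is an isomorphism for $j<k$ and an epimorphism for $j=k$.

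Next I would combine (\ref{form: waldhausen}) with the rational identification $\pi_j^S(M_+)\otimes\Q\cong H_j(M;\Q)$ to obtain a natural direct sum decomposition
\[
\pi_j^\Q A(M)\cong H_j(M;\Q)\oplus\pi_{j-2}^\Q\mathscr P(M)
\]
under which $(A_f)_*$ splits as $H_j(f;\Q)\oplus\pi_{j-2}^\Q\mathscr P(f)$. Since $f$ is $2$-connected and rationally $k$-connected, its homotopy fiber $F$ is simply connected and rationally $(k-1)$-connected, so the rational Hurewicz theorem gives $H_j(F;\Q)=0$ for $1\le j\le k-1$. The rational Serre spectral sequence for $F\to M\to M'$ then yields that $H_j(f;\Q)$ is itself an isomorphism for $j<k$ and an epimorphism for $j=k$. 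Having iso/epi on the total and on the homology factor, a direct diagram chase on the direct sum decomposition forces the remaining factor $\pi_{j-2}^\Q\mathscr P(f)$ to satisfy the same iso/epi behavior. Substituting $i=j-2$ yields the claimed isomorphism range $i<k-2$ and epimorphism at $i=k-2$.

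The delicate point, which I expect to be the main obstacle, is verifying that $(A_f)_*$ genuinely respects the splitting (\ref{form: waldhausen}) as a direct sum rather than inducing an upper-triangular map with a nontrivial off-diagonal piece. This follows from the structure of the splitting as stated in~\cite{WJR}: the summand $\pi_j^S(M_+)$ arises as the image of the natural assembly map $\Sigma^\infty M_+\to A(M)$, while $\pi_{j-2}\mathscr P(M)$ corresponds to its cofiber, and both constructions are functorial in $M$. I would include a brief remark spelling this out. Once this naturality is in place, the rest of the argument is a mechanical combination of Corollary~\ref{cor: A and rational he} with standard rational homotopy facts.
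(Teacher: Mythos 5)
Your proof is correct and follows essentially the same route as the paper's: combine Corollary~\ref{cor: A and rational he}, the naturality of the Waldhausen splitting (\ref{form: waldhausen}), and the fact that a $2$-connected rationally $k$-connected map of simply-connected spaces induces rational homology isomorphisms below degree $k$ and an epimorphism in degree $k$. The paper invokes the latter fact via Spanier's Whitehead theorem mod the Serre class of torsion abelian groups, whereas you re-derive it directly from the rational Hurewicz theorem for the homotopy fiber and the rational Serre spectral sequence; these are the same mathematical content, and your version has the virtue of making explicit why the homology input is needed (to rule out an off-diagonal contribution under the natural splitting), a point the paper treats more tersely.
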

\begin{proof}
This follows from naturality of (\ref{form: waldhausen}) combined with
Corollary~\ref{cor: A and rational he} and the Whitehead theorem mod the Serre class
of abelian torsion groups~\cite[Theorem 9.6.22]{Spa-book}.
\end{proof}

If $X$ is simply-connected, then $X\to\ast$
is $2$-connected, so that (\ref{form: goodwillie's iso}) implies:
\begin{cor}
\label{cor: Goodwillie rel}
If $X$ is simply-connected, then $\pi_{i}^\Q (A(X), A(\ast ))$
is isomorphic to $H_{i-1}^{S^1}(LX, \ast;\mathbb Q)$ for all $i$.
\end{cor}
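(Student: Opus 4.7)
The plan is to deduce the corollary from Goodwillie's isomorphism (\ref{form: goodwillie's iso}) applied to the constant map $f\co X\to\ast$. Since $X$ is simply-connected, the homotopy fiber of $f$---which is $X$ itself---is $1$-connected, so $f$ is $2$-connected and (\ref{form: goodwillie's iso}) supplies a natural isomorphism $\pi_i^\Q(A_f)\cong H_{i-1}^{S^1}(L_f;\mathbb Q)$ for every $i$.

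The remaining task is to identify the two sides of this isomorphism with the groups named in the corollary. The crucial feature of $f$ is that it has a section: the inclusion $\ast\to X$ of any basepoint. By functoriality of $A(-)$ and of the Borel construction underlying $H_*^{S^1}(L(-);\mathbb Q)$, this section splits both $A_f$ and $L_f$, and the resulting splittings are precisely the retractions underlying the direct sum decompositions (\ref{form: A and P reduced}) and (\ref{form: equiv reduced}).

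Given a split surjection $g_i$, the long exact sequence (\ref{form: long exact seq}) forces $F_i(Y)/\im g_i=0$, leaving $F_i(g)\cong\ker g_{i-1}$, which is exactly the complementary summand in the corresponding splitting. Applied to $A_f$ this identifies $\pi_i^\Q(A_f)$ with $\pi_{i-1}^\Q(A(X),A(\ast))$; applied to $L_f$ (noting that $L\ast=\ast$) it identifies $H_{i-1}^{S^1}(L_f;\mathbb Q)$ with $H_{i-2}^{S^1}(LX,\ast;\mathbb Q)$. Feeding both identifications into Goodwillie's isomorphism and reindexing $i\mapsto i+1$ yields the claim.

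There is no substantial obstacle here---the argument is formal once (\ref{form: goodwillie's iso}) is granted. The only point worth attention is the degree bookkeeping: three shifts appear (one intrinsic to Goodwillie's isomorphism and one from each application of (\ref{form: long exact seq}) to a split surjection), and they combine cleanly into the single shift by one between the homotopy and homology indices that appears in the corollary.
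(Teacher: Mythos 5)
Your proof is correct and follows essentially the same route as the paper: verify $2$-connectivity of $X\to\ast$, apply Goodwillie's isomorphism (\ref{form: goodwillie's iso}), and use the basepoint section to translate $\pi_i^\Q(A_f)$ and $H_{i-1}^{S^1}(L_f;\mathbb Q)$ into the reduced/relative groups of (\ref{form: A and P reduced}) and (\ref{form: equiv reduced}) via the $F_i(g)\cong\ker g_{i-1}$ identification for split surjections. The only difference is cosmetic: you carry out the index shifts at a different stage and reindex at the end, while the paper writes the chain of isomorphisms directly in the indices of the statement.
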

\begin{proof}
If $f\co X\to\ast$, then $A_f$, $L_f$ are retractions, so (\ref{form: long exact seq})
splits into short exact sequences. In view of (\ref{form: A and P reduced}) and 
(\ref{form: equiv reduced}), we get isomorphisms
\[
H_{i-1}^{S^1}(LX, \ast;\mathbb Q)\cong
H_i^{S^1}(L_f;\mathbb Q)\cong
\pi_{i+1}^\Q(A_f)\cong\pi_i^\Q (A(X), A(\ast ))
\]
where the middle isomorphism is given by (\ref{form: goodwillie's iso}).
\end{proof}

By (\ref{form: waldhausen}) and Corollary~\ref{cor: Goodwillie rel}
the Poincar\'e series of $\pi_*^\Q(\mathscr P(M), \mathscr P(*))$ 
equals the difference of the Poincar\'e series of $HC_{*+1}(M, \ast;\mathbb Q)$ and
$H_{*+2}(M;\mathbb Q)$. For future use we record some explicit computations of $HC_*(M)$. 

If $M$ is simply-connected and $H^*(M;\Q)\cong \Q[\a]/(\a^{n+1})$
the Poincar\'e series for $HC_*(M;\mathbb Q)$ was
found in~\cite[Theorem B]{VPBur} giving the following
Poincar\'e series for $\pi_*^\Q(\mathscr P(M), \mathscr P(*))$: 
{\small \begin{equation}
\label{form: P(HPd)}
\frac{(1-t^{4n})\,{ t^{4n+4}}}{(1-t^4)\,(1-t^{4n+2})}\qquad
\text{if $\a \in H^4(M;\Q)$}.
\end{equation}
\begin{equation}
\label{form: P(CPd)}
\frac{t^{2n}}{1-t^2}\qquad
\text{if $\a \in H^2(M;\Q)$}
\end{equation}
}
\vspace{-7pt}

In particular, (\ref{form: P(HPd)}) applies to $M=HP^n$, and (\ref{form: P(CPd)})
applies when $M$ is $CP^n$ or the total space of any nontrivial $S^2$-bundle over $S^4$,
see~\cite[Corollary 3.9]{GroZil},
which in fact is rationally homotopy equivalent to $CP^3$.

Next we compute $\pi_*^\Q(\mathscr P(M), \mathscr P(*))$
when $M$ is $S^4\times S^4$ and $S^7\times S^4$. The Poincar\'e series of
$HC_*(S^4,\ast;\mathbb Q)$ equals $t^3(1-t^6)^{-1}$~\cite[Theorem B]{VPBur},
so by dimension reasons $HC_*(S^4;\mathbb Q)$ is quasifree in the sense of~\cite[p.303]{BurFie}. 
Hence the K\"unneth formula of~\cite[Theorem B(b)]{BurFie} applies and
for any connected space $X$ we have
\[
HC_*(X\times S^4, \ast;\mathbb Q)\cong 
HC_*(X, \ast;\mathbb Q)\oplus H_*(LX;\mathbb Q)\otimes HC_*(S^4,\ast;\mathbb Q).
\] 
Recall that taking the Poincar\'e series converts $\oplus$ to the sum and $\otimes$ to the 
product of series.

Set $X=S^4$. The Poincar\'e series for $H_*(LS^4;\mathbb Q)$
is given in~\cite[Theorem B(2b)]{VPBur} and it simplifies to $1+(t^3+t^4)(1-t^6)^{-1}$.
Therefore, the Poincar\'e series for $HC_*(S^4\times S^4, \ast;\mathbb Q)$ equals
\[2t^3(1-t^6)^{-1}+(t^6+t^7)(1-t^6)^{-2},\] and we get
the Poincar\'e series for $\pi_*^\Q(\mathscr P(S^4\times S^4), \mathscr P(*))$: 
{\small \begin{equation}
\label{form: P(S4xS4)}
\frac{2t^2}{1-t^6}+
\frac{t^5+t^6}{(1-t^6)^2}-2t^2-t^6.
\end{equation}}

Set $X=S^7$. The Poincar\'e series for $HC_*(S^7, \ast;\mathbb Q)$, 
$H_*(LS^7;\mathbb Q)$ equal
$t^6(1-t^6)^{-1}$,  $(1+t^7)(1-t^6)^{-1}$, respectively. Hence
the Poincar\'e series for $HC_*(S^7\times S^4, \ast;\mathbb Q)$ equals
$t^6(1-t^6)^{-1}+t^3(1+t^7)(1-t^6)^{-2}$, and therefore, we get
the Poincar\'e series for $\pi_*^\Q(\mathscr P(S^7\times S^4), \mathscr P(*))$: 
{\small \begin{equation}
\label{form: P(S7xS4)}
\frac{t^5}{1-t^6}+
\frac{t^2(1+t^7)}{(1-t^6)^2}-t^2-t^5-t^9.
\end{equation}}

\section{Block automorphisms, pseudoisotopies, and surgery}
\label{sec: invol, surg, block}

Throughout this section $M$ is a compact manifold with (possibly empty) boundary.

Let $G(M,\partial)$ denote the space of all continuous self-maps
$(M,\partial M)$ that are homotopy equivalences of pairs that restrict to
the identity on $\partial M$, and let $\Diff (M, \d)$
be the group of diffeomorphisms that restrict to the identity of $\d M$.

Let $L_j^s(\mathbb ZG)$ denote the Wall's $L$-group of $G$ for surgery up to
simple homotopy equivalence.
These are abelian groups which are fairly well understood when $G$
is finite. In particular, if $G$ is trivial, then $L_j^s(\mathbb Z)$ 
is isomorphic to $\mathbb Z$ for $j\equiv 0 \,(\,\mathrm{mod}\,4\,)$ and is finite otherwise.

The following is known to experts but we could not locate a reference.

\begin{thm}
\label{thm: bound on pi_i Diff}
If $M$ is a compact orientable manifold and $i\ge 1$, then the dimension of 
$\pi_i^\Q\Diff (M, \d)$
is bounded above by the dimension of
\[
\Q\otimes\bigg(
\pi_i G(M, \d)\oplus\pi_i {\mathscr P}(M)\oplus
L^s_{q+1}(\mathbb Z\pi_1M)\oplus\Big(\underset{l\in\mathbb Z_+}{\oplus} H_{q-4l}(M)\Big)\bigg)\] 
provided $3i+9<\dim M$
and $q=i+1+\dim M$.
\end{thm}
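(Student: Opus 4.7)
The plan is to interpolate $\Diff(M,\d)$ through the block diffeomorphism group $\widetilde{\Diff}(M,\d)$ and the homotopy automorphism space $G(M,\d)$, bounding each rational homotopy group via the long exact sequence of the appropriate fibration, and then computing the extra terms via concordance stability and the surgery exact sequence. Throughout I use the elementary observation that for any fibration $F\to E\to B$ the long exact sequence gives $\dim\pi_i^\Q E \le \dim\pi_i^\Q B + \dim\pi_{i+1}^\Q F$ once the base and fiber are understood rationally.

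First I would apply this to the principal fibration $\Diff(M,\d) \to \widetilde{\Diff}(M,\d) \to \widetilde{\Diff}(M,\d)/\Diff(M,\d)$ to get
\[
\dim\pi_i^\Q\Diff(M,\d) \,\le\, \dim\pi_i^\Q\widetilde{\Diff}(M,\d) + \dim\pi_{i+1}^\Q\bigl(\widetilde{\Diff}(M,\d)/\Diff(M,\d)\bigr).
\]
The hypothesis $3i+9<\dim M$ places us within Igusa's concordance stable range, so the Weiss--Williams theorem identifies $\widetilde{\Diff}(M,\d)/\Diff(M,\d)$ in a range with an infinite loop space built from $\mathscr P(M)$ equipped with its canonical involution. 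Tensoring with $\Q$ collapses the $\Z/2$-homotopy fixed points to the honest $\pm 1$-eigenspaces of the involution on $\pi_*^\Q\mathscr P(M)$, yielding $\dim\pi_{i+1}^\Q(\widetilde{\Diff}/\Diff) \le \dim\pi_i^\Q\mathscr P(M)$ after careful bookkeeping of the shift.

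Next, the fibration $\widetilde{\Diff}(M,\d) \to G(M,\d) \to G(M,\d)/\widetilde{\Diff}(M,\d)$ yields
\[
\dim\pi_i^\Q\widetilde{\Diff}(M,\d) \,\le\, \dim\pi_i^\Q G(M,\d) + \dim\pi_{i+1}^\Q\bigl(G(M,\d)/\widetilde{\Diff}(M,\d)\bigr),
\]
where the last factor is $\pi_{i+1}^\Q$ of the simple block structure space $\widetilde{\mathcal S}^s(M\,\mathrm{rel}\,\d)$. The Wall--Quinn--Ranicki surgery exact sequence
\[
L^s_{q+1}(\Z\pi_1 M) \to \pi_{i+1}\widetilde{\mathcal S}^s(M\,\mathrm{rel}\,\d) \to \bigl[\Sigma^{i+1}(M/\d M),\,G/O\bigr] \to L^s_q(\Z\pi_1 M),
\]
where $q=\dim M+i+1$, bounds $\dim\pi_{i+1}^\Q\widetilde{\mathcal S}^s$ by the sum of $\dim L^s_{q+1}(\Z\pi_1 M)\otimes\Q$ and $\dim[\Sigma^{i+1}(M/\d M),G/O]\otimes\Q$. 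Since $\pi_*^\Q(G/O)$ is $\Q$ in degrees divisible by $4$ and vanishes otherwise, elementary obstruction theory together with orientable Poincar\'e--Lefschetz duality gives
\[
\bigl[\Sigma^{i+1}(M/\d M),\,G/O\bigr]\otimes\Q \,\cong\, \bigoplus_l H^{4l-i-1}(M,\d M;\Q) \,\cong\, \bigoplus_l H_{q-4l}(M;\Q).
\]
Summing the three bounds produces the claimed inequality.

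The main obstacle is the first step: getting a precise rational comparison between $\widetilde{\Diff}/\Diff$ and $\mathscr P(M)$ valid under the explicit hypothesis $3i+9<\dim M$. This demands the Weiss--Williams description of the homotopy fiber of $B\Diff(M,\d)\to B\widetilde{\Diff}(M,\d)$ in the concordance stable range together with the rational analysis of the canonical involution on $\mathscr P(M)$; rationally these collapse nicely, but the shift by one in the indexing must be reconciled with the exponent $i+1$ appearing in the long exact sequence. The orientability hypothesis enters only in the final duality step, while the remaining estimates are formal consequences of the three fibrations and the surgery exact sequence.
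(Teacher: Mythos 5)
Your proposal follows essentially the same three-step decomposition as the paper: first compare $\Diff(M,\d)$ to $\widetilde{\Diff}(M,\d)$ via pseudoisotopy theory in Igusa's stable range, then compare $\widetilde{\Diff}(M,\d)$ to $G(M,\d)\simeq\widetilde{G}(M,\d)$, and finally bound the block structure space via the surgery exact sequence and Poincar\'e--Lefschetz duality. The only notable difference is that you invoke Weiss--Williams for the $\widetilde{\Diff}/\Diff$ step whereas the paper uses Hatcher's spectral sequence $E^2_{pq}=H_p(\mathbb Z_2;\pi_q\mathscr P(M))$; rationally both collapse $H_{p>0}(\mathbb Z_2;-)$ to zero and yield the same bound $\dim\pi_{i+1}^\Q(\widetilde{\Diff}/\Diff)\le\dim\pi_i^\Q\mathscr P(M)$.
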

\begin{proof}
Every topological monoid with the identity has abelian fundamental group
so tensoring its $i$th homotopy group with $\Q$ makes sense for $i\ge 1$.

Let $\widetilde{\mathrm{G}}(M,\d )$ be the topological monoid of block
homotopy equivalences of $(M, \d M)$ that are the identity on the 
boundary, and let $\widetilde{\Diff}(M,\d )$ be the subgroup
of block diffeomorphisms (see e.g.~\cite{BerMad} for background on block automorphisms). 
The inclusion $G(M,\d )\to \widetilde G(M,\d )$ is a homotopy equivalence, see~\cite[p.21]{BerMad}
and there is a fibration
\[
\widetilde G(M,\d )/\widetilde{\Diff}(M,\d )\to B\widetilde{\Diff}(M,\d )\to
B\widetilde{G}(M,\d )
\]
whose homotopy sequence gives
\begin{equation}
\label{form: diff-tilde and G}
\dim\pi_i^\Q\,\widetilde{\Diff}(M,\d )\le
\dim\Big(\pi_i^\Q G(M,\d )\oplus\pi_{i+1}^\Q\widetilde G(M,\d )/\widetilde{\Diff}(M,\d )\Big).
\end{equation} 
Hatcher~\cite[Chapter 2]{Hat-surv} constructed a spectral sequence $E^n_{pq}$
converging to $\pi_{p+q+1}\widetilde{\Diff}(M,\d )/\Diff(M,\d )$
with \[
E^1_{pq}=\pi_qP(M\times D^p)\quad\text{and}\quad 
E^2_{pq}=H_p(\mathbb Z_2; \pi_q\mathscr P(M))\]
for $q\ll p+\dim(M)$.
All elements in $H_{p>0}(\mathbb Z_2; -)$ have order $2$~\cite[Proposition III.10.1]{Bro-book},
so rationally only the terms $E^2_{0q}$ can be nonzero. 
Hatcher's arguments combined with Igusa's stability theorem~\cite{Igu-stability} 
show that 
$\pi_{q+1}^\Q(\widetilde{\Diff}(M,\d ),\Diff(M,\d ))$ 
is a quotient of $E^1_{0q}\otimes\mathbb Q=\pi_q^\Q {\mathscr P}(M)$  
provided $\max\{10, 3q+9\}<\dim(M)$.
Thus the homotopy exact sequence of the pair $(\widetilde{\Diff}(M,\d ),\Diff(M,\d ))$
implies for $i\ge 1$ and $3i+9<\dim(M)$:
\begin{equation}
\label{form: from hatcher spectral sequence}
\dim\pi_{i}^\Q\Diff(M,\d )\le 
\dim\Big(\pi_{i}^\Q\widetilde{\Diff}(M,\d )\oplus\pi_{i}^\Q {\mathscr P}(M)\Big). 
\end{equation}
Surgery theory allows us to
identify $\pi_{i+1}\widetilde G(M,\d )/\widetilde{\Diff}(M,\d )$ with
the relative smooth structure set $\mathcal S(M\times D^{i+1}, \d)$,
see~\cite{Qui} and~\cite[p.21-22]{BerMad}. Set $Q=M\times D^{i+1}$ and $q=\dim Q$.
If $\dim Q>5$ and $i\ge 0$, then the surgery exact sequence 
\begin{equation}
\label{form: surgery sequence}
L_{1+\dim Q}^s(\mathbb Z\pi_1 Q)\to \mathcal S(Q, \d )\to [Q/\d Q, F/O]\to
L_{\dim Q}^s(\mathbb Z\pi_1 Q)
\end{equation}
is an exact sequence of abelian groups, where 
$F/O$ is the homotopy fiber of the $J$-homomorphism $BO\to BF$. 
Since $BF$ is 
rationally contractible, the fiber inclusion $F/O\to BO$ is a rational homotopy equivalence,
hence rationally $F/O$ is the product of Eilenberg-MacLane spaces $K(\mathbb Z, 4l)$, $l\in\mathbb Z_+$.
It follows that 
\[
[Q/\d Q,F/O]\otimes\mathbb Q\cong \underset{l\in\mathbb Z_+}{\oplus} H^{4l}(Q/\d Q;\mathbb Q).
\] 
where by the Poincar\'e-Lefschetz duality
\[
\widetilde H^{j}(Q/\d Q;\mathbb Q)\cong H^{j}(Q,\d Q;\mathbb Q)\cong H_{\dim Q-j}(Q;\mathbb Q)
\cong H_{\dim Q-j}(M;\mathbb Q)
\]
which  completes the proof because of 
(\ref{form: diff-tilde and G}), (\ref{form: from hatcher spectral sequence}), 
(\ref{form: surgery sequence}).
\end{proof}

\begin{cor} 
\label{cor: pi=0 implies diff=0}
Let $M$ be a compact simply-connected manifold and let $i\ge 1$ 
such that $\pi_i^\Q G(M, \partial)=0$
and $3i+9<\dim M$. Let $q=\dim M +i +1$.
If one of the following is true 
\vspace{-4pt} 
\begin{itemize} 
\item 
$q$ equals $0$ or $1$ $\mathrm{mod}\ 4$, and 
$\tilde H_*(M;\mathbb Q)=H_{2r}(M;\Q)$ for some odd $r$, 
\item 
$q$ equals $1$ or $2$ $\mathrm{mod}\ 4$,
and $\tilde H_*(M;\mathbb Q)\cong \underset{r\in\mathbb Z_+}{\oplus} H_{4r}(M;\Q)$,
\end{itemize}
\vspace{-7pt}
then  $\dim\pi_i^\Q \Diff(M,\d )\le\dim\pi_i^\Q P(M)$.
\end{cor}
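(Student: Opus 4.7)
The plan is to invoke Theorem~\ref{thm: bound on pi_i Diff} and check that each summand on the right-hand side, other than $\pi_i^\Q\mathscr P(M)$, contributes zero under the hypotheses. Since $\dim M>3i+9$ implies $\dim M\ge\max\{2i+7,3i+4\}$, Igusa's stability theorem gives a $\pi_i^\Q$-isomorphism $P(M)\to\mathscr P(M)$, so the $\mathscr P$-summand in Theorem~\ref{thm: bound on pi_i Diff} coincides with the $\pi_i^\Q P(M)$ appearing in the conclusion. The hypothesis $\pi_i^\Q G(M,\partial)=0$ eliminates the $G$-summand.

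For the Wall-group summand, $M$ simply-connected gives $L_j^s(\mathbb Z)\otimes\Q=\Q$ when $j\equiv 0\pmod 4$ and zero otherwise; in either case of the hypothesis one checks $q+1\not\equiv 0\pmod 4$, so $L_{q+1}^s(\mathbb Z)\otimes\Q=0$. For the homology summand $\bigoplus_{l\ge 1}H_{q-4l}(M;\Q)$ I run a parity check: in Case~1 with $q\equiv 1\pmod 4$, the indices $q-4l\equiv 1\pmod 4$ avoid both $0$ and $2r\equiv 2\pmod 4$; in Case~2, the indices are $\equiv 1$ or $2\pmod 4$ and so avoid $0$ as well as every positive multiple of $4$. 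In each of these situations the four-term bound of Theorem~\ref{thm: bound on pi_i Diff} collapses to $\dim\pi_i^\Q P(M)$.

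The one subcase where the parity count leaves a residue is Case~1 with $q\equiv 0\pmod 4$, in which $l=q/4\ge 1$ contributes $H_0(M;\Q)=\Q$. To absorb this extra $\Q$ I plan to reopen the proof of Theorem~\ref{thm: bound on pi_i Diff} at the surgery step~(\ref{form: surgery sequence}): for simply-connected $Q=M\times D^{i+1}$ with $\dim Q=q\equiv 0\pmod 4$, the rationalized surgery obstruction $\sigma\colon[Q/\partial Q,F/O]\otimes\Q\to L_q^s(\mathbb Z)\otimes\Q=\Q$ is surjective, since the signature formula realizes it through the top component in $H^q(Q/\partial Q;\Q)$, which under the Poincar\'e--Lefschetz duality used in the proof of Theorem~\ref{thm: bound on pi_i Diff} corresponds exactly to the $H_0(M;\Q)$-summand. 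Replacing the crude estimate $\dim\mathcal S(Q,\partial)\otimes\Q\le\dim L^s_{q+1}(\mathbb Z)\otimes\Q+\dim[Q/\partial Q,F/O]\otimes\Q$ by $\dim\mathcal S(Q,\partial)\otimes\Q\le\dim L^s_{q+1}(\mathbb Z)\otimes\Q+\dim\ker\sigma\otimes\Q$ eliminates this residue.

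The main obstacle is this last subcase, which cannot be handled by quoting Theorem~\ref{thm: bound on pi_i Diff} directly but requires the rational surjectivity of the signature map on simply-connected normal invariants of dimension divisible by $4$.
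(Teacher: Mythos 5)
Your proposal tracks the paper's own proof very closely: both reduce to Theorem~\ref{thm: bound on pi_i Diff}, carry out the same parity bookkeeping against the stated hypotheses on $\tilde H_*(M;\Q)$ and $q$, and both isolate exactly one subcase that does not follow by quotation—namely $q\equiv 0\bmod 4$, where the summand $H_{0}(M;\Q)\cong\Q$ survives the count and must be absorbed by showing the rational surgery obstruction map $[Q/\partial Q,F/O]\otimes\Q\to L_q^s(\mathbb Z)\otimes\Q$ is onto. Where you part ways with the paper is the justification of that last nonvanishing. You invoke the characteristic-class description of the surgery obstruction (Sullivan–Wall / Hirzebruch signature formula), observing that the top component $H^q(Q/\partial Q;\Q)$, Poincaré–Lefschetz dual to $H_0(M;\Q)$, maps onto $L_q^s(\mathbb Z)\otimes\Q$ because the $L$-genus has nonzero leading coefficient. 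The paper, stating explicitly that it could not find this surjectivity in the literature, instead gives a self-contained geometric proof: plumbing produces normal maps $(P,\d P)\to (D^q,\d D^q)$ of arbitrary surgery obstruction, one arranges $\d P\cong \d D^q$ by iterated boundary connected sum (finiteness of the group of homotopy spheres), and then grafts $P$ into $Q$ using additivity of the surgery obstruction. The two justifications are morally the same—plumbing is precisely how one realizes a change of top Pontryagin class on a disk—but the paper's version is a construction rather than a citation, so if you want your argument to be as airtight as theirs you should either supply a reference for the rational surjectivity of the simply-connected surgery obstruction on normal invariants in dimensions $\equiv 0\bmod 4$, or reproduce the plumbing argument. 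Everything else in your proposal is correct and matches the paper's reasoning step for step.
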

\begin{proof}
The assertion is a consequence of Theorem~\ref{thm: bound on pi_i Diff} except when $q=0\,(\mathrm{mod}\ 4)$.
But in this case we can remove $H_{0=q-4l}(M)\cong\mathbb Z$ from the right hand side of the inequality
in the statement of Theorem~\ref{thm: bound on pi_i Diff}
because in (\ref{form: surgery sequence}) 
the surgery obstruction map $[Q/\d Q, F/O]\to L_q^s(\mathbb Z)\cong\mathbb Z$ is nonzero.
We could not find this stated in the literature, so here is a proof. Recall that a normal map is
a morphism of certain stable vector bundles whose restriction to the zero sections is a degree one map
that is a diffeomorphism on the boundary.
By plumbing, see~\cite[Theorems II.1.3]{Bro-surg-book}, for every integer $n$ one can find a 
compact manifold $P$ and a degree one map 
$(P, \d P)\to (D^{q=4l}, \d D^q)$ that restricts to a homotopy equivalence $\d P\to \d D^q$,
is covered by a morphism from the stable normal bundle
of $P$ to the trivial bundle over $D^q$, and whose surgery obstruction equals $n$.
The group of homotopy $(q-1)$-spheres is finite, so 
by taking boundary connected sums of this normal map with itself
sufficiently many, say $k$, times we can arrange that the homotopy sphere 
$\d P$ is diffeomorphic to $\d D^q$; the surgery obstruction then equals $kn$. 
The map $\d P\to D^q$ preserves the orientation, so 
identifying $\d P$ with $\d D^q$ yields a self-map of $\d D^n$ that is homotopic
to the identity. Attaching the trace of this homotopy to $P$ we can assume
that $\d P\to \d D^q$ is the identity.
Let $L$ be the manifold built by replacing an embedded $q$-disk in $\Int Q$ with $P$, so that
there is a degree one map $(L, \d L)\to (Q, \d Q)$ that equals the identity outside 
the embedded copy of $P$. The bundle data match because the restriction of the stable normal bundle
of $P$ to $\d P$ is the stable normal bundle to $\d P$, which is trivial. The additivity of the surgery obstruction, see~\cite[II.1.4]{Bro-surg-book},
shows that the surgery obstruction of the above normal map covering $(L, \d L)\to (Q, \d Q)$ equals $kn$.
\end{proof}

\section{Manifolds for which $\iota_N$ is not injective on rational homotopy}
\label{sec: main results}

In this section we derive criteria of when $\iota_N$ is not injective on rational homotopy
groups and verify the criteria for manifolds in Theorem~\ref{thm-intro: main app}.
To apply results of Section~\ref{sec: invol, surg, block} we need to bound
the size of $\pi_i G(M, \partial)$.

\begin{prop}
\label{prop: bound on G}
If $E$ is a compact simply-connected manifold with $\pi_l^\Q(E)=0$ for all $l\ge n$, 
then $\pi_i G(E\times D^m, \partial)$ is finite for all $m\ge \max\{0, n-i\}$.
\end{prop}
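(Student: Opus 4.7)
The plan is to identify $\pi_i G(E\times D^m,\partial)$ with a single homotopy group of $\mathrm{Map}(E,E)$ and then deduce finiteness from rational obstruction theory on the Postnikov tower of $E$. Set $M=E\times D^m$. For $i\ge 1$ the group $\pi_i G(M,\partial)$ agrees with $\pi_i$, based at $\id_M$, of the space $\mathrm{Map}_\partial(M,M)$ of continuous self-maps of $M$ that restrict to the identity on $\partial M$, since being a homotopy equivalence is a condition on path components.

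First, I would analyze the restriction fibration
\[
\mathrm{Map}_\partial(M,M)\to \mathrm{Map}(M,M)\to \mathrm{Map}(\partial M, M),
\]
using the homotopy equivalence $M\simeq E$ obtained by collapsing the disk factor. In the case $\partial E=\emptyset$, we have $\partial M=E\times S^{m-1}$ and the restriction map becomes, up to homotopy, precomposition with the projection $E\times S^{m-1}\to E$. The evaluation fibration for the $S^{m-1}$-factor, split by constant maps, yields $\pi_j \mathrm{Map}(E\times S^{m-1},E)\cong \pi_j \mathrm{Map}(E,E)\oplus \pi_{j+m-1}\mathrm{Map}(E,E)$, and the restriction induces the inclusion onto the first summand, which is injective with cokernel the second summand. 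The long exact sequence of the fibration then collapses to
\[
\pi_i\mathrm{Map}_\partial(M,M)\cong \pi_{i+m}\mathrm{Map}(E,E).
\]

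Second, I would show $\pi_{i+m}\mathrm{Map}(E,E)$ is finite whenever $i+m\ge n$ (which is the case since $m\ge \max\{0,n-i\}$). The evaluation fibration $\mathrm{Map}_*(E,E)\to \mathrm{Map}(E,E)\to E$, split by constant maps, decomposes this as $\pi_{i+m}(E)\oplus [\Sigma^{i+m}E, E]_*$. The first summand is finitely generated (since $E$ is a simply-connected finite CW complex) and rationally trivial by hypothesis, hence finite. For $[\Sigma^{i+m}E,E]_*$, finite generation follows by cellular induction on $\Sigma^{i+m}E$ using that $\pi_*(E)$ is finitely generated; rational vanishing follows by obstruction theory on the Postnikov tower of the rationalization $E_\Q$. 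Because $E_\Q$ has nontrivial Postnikov stages only in degrees $l<n$, the successive obstructions and indeterminacies lie in $\widetilde H^*(\Sigma^{i+m}E;\pi_l^\Q E)$ with $l<n\le i+m$; since the reduced cohomology of $\Sigma^{i+m}E$ vanishes in degrees $\le i+m$, all relevant groups vanish, the trivial map is the unique lift, and $[\Sigma^{i+m}E,E_\Q]_*=0$.

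The main technical step is the Postnikov-tower vanishing; finite generation of $[\Sigma^{i+m}E,E]_*$ is routine. The case $\partial E\ne \emptyset$ is handled analogously using a Mayer--Vietoris fibration for $\mathrm{Map}(\partial M,M)$ corresponding to the decomposition $\partial(E\times D^m)=(\partial E\times D^m)\cup_{\partial E\times S^{m-1}}(E\times S^{m-1})$, to each piece of which the same rational-vanishing argument applies.
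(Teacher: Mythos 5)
Your strategy is genuinely different from the paper's. The paper (via Lemma~\ref{lem: G has finite homotopy}) applies obstruction theory directly to the adjoint maps $M\times D^i\to M$, comparing them by difference cochains in $H^l(M\times D^i,\partial(M\times D^i);\pi_l M)$, and shows these groups are finite using Poincar\'e--Lefschetz duality together with the facts that $\pi_l M$ is finite for $l\ge n$ and $H_j(M)\cong H_j(E)=0$ for $j>\dim E$; the inequality $m\ge n-i$ enters through the dimension count $\dim M+i-n\ge\dim E$. You instead reduce $\pi_i\mathrm{Map}_\partial(M,M)$ to $\pi_{i+m}\mathrm{Map}(E,E)$ via the restriction fibration and the evaluation splitting for the $S^{m-1}$-factor, then do obstruction theory on $\mathrm{Map}(E,E)$. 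This is an appealing idea because the degree shift $i+m\ge n$ then makes the obstruction groups vanish without explicitly invoking Poincar\'e duality.

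However, there is a genuine gap: your first reduction is carried out only when $\partial E=\emptyset$, and the case $\partial E\neq\emptyset$ --- which is the one the paper actually uses, since in Proposition~\ref{prop: signly generated} and Theorem~\ref{thm-intro: main app} $E$ is always a disk bundle with $\partial E\neq\emptyset$ --- is dispatched in one sentence. The Mayer--Vietoris homotopy pullback for $\mathrm{Map}(\partial M,E)$ with $\partial M=(\partial E\times D^m)\cup_{\partial E\times S^{m-1}}(E\times S^{m-1})$ is not a product situation, the restriction $\mathrm{Map}(M,E)\to\mathrm{Map}(\partial M,E)$ does not obviously split off a single summand, and you would have to chase the resulting ladder of long exact sequences before the finiteness of $\pi_{i+m}\mathrm{Map}(E,E)$-type groups can be invoked; none of this is done. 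The case $m=0$ (allowed whenever $i\ge n$) also falls outside your $S^{m-1}$-factor argument. Finally, a minor imprecision: in the split evaluation fibration $\mathrm{Map}_*(E,E)\to\mathrm{Map}(E,E)\to E$ the relevant basepoint is $\mathrm{id}_E$, not the constant map, so $\pi_{i+m}\mathrm{Map}_*(E,E)$ need not coincide with $[\Sigma^{i+m}E,E]_*$ --- the components of $\mathrm{Map}_*(E,E)$ are not generally homotopy equivalent since $E$ is not an $H$-space. This last point is repairable (run obstruction theory or Federer's spectral sequence directly on $\mathrm{Map}(E,E)$ at $\mathrm{id}_E$), but the missing $\partial E\neq\emptyset$ case is the substantive issue. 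The paper's Lemma~\ref{lem: G has finite homotopy} sidesteps all of this by working uniformly with $M$ itself and Lefschetz duality; you may find it instructive that since $(M\times D^i)/\partial(M\times D^i)\simeq\Sigma^{i+m}(E/\partial E)$, the obstruction groups there already vanish in degrees $<i+m$ without any case split on $\partial E$.
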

\begin{proof}
Since $E$ is compact simply-connected, $\pi_l E$ is finitely generated
for all $l$, see~\cite[Corollary 9.6.16]{Spa-book}, so $\pi_l E$ is finite for $l\ge n$.
For any $m\ge \max\{0, n-i\}$ 
\[
\dim(E\times D^m)+i-n\ge\dim E +\max\{0, n-i\}+i-n\ge \dim E,
\] 
so $H_j(E\times D^m)=0$ for $j>\dim(E\times D^m)+i-n$
and the claim follows by applying Lemma~\ref{lem: G has finite homotopy} below to $M=E\times D^m$.
\end{proof}
 
\begin{rmk}
To apply the above proposition we either fix any $n$, $i$ and pick
$m$ large enough, or assume $i\ge n$ and let $m$ be arbitrary. 
Note that if $M$ a rationally elliptic manifold, then $\pi_i^\Q(M)=0$ for all 
$i\ge 2\sup\{l\co H_l(M;\Q)\neq 0\}$, see~\cite[Theorem 32.15]{FHT-book}.
\end{rmk}

\begin{lem} 
\label{lem: G has finite homotopy} 
Let $M$ be a compact orientable manifold such that for each $l$
the group $\pi_l M$ is finitely generated and 
$\pi_1 M$ acts trivially on  $\pi_l(M)$.
If $\pi_l(M)$ is finite for all $l\ge n$ and 
$H_j(M)$ is finite for all $j>\dim(M)+i-n$, 
then $\pi_i G(M, \partial)$ is finite.
\end{lem}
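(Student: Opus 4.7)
Plan: The strategy is to analyze $\pi_i G(M, \partial)$ via obstruction theory on a tower of mapping spaces. For $i\ge 1$, since $G(M, \partial)$ is a union of path-components of $\mathrm{Map}((M, \partial M), (M, \id_{\partial M}))$, we have $\pi_i G(M, \partial) = \pi_i \mathrm{Map}((M, \partial M), (M, \id_{\partial M}))$ based at the identity, so it suffices to prove finiteness for the latter (the $i=0$ case follows from the same obstruction argument applied to sets of components).

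Let $\phi_k \co M \to P_k M$ denote the $k$-th Postnikov section of $M$ and let $\mathcal{M}_k$ be the space of maps $f \co M \to P_k M$ satisfying $f|_{\partial M} = \phi_k|_{\partial M}$, based at $\phi_k$. Postcomposition with $\phi_k$ defines a map $\mathrm{Map}((M, \partial M), (M, \id)) \to \mathcal{M}_k$ that is a $\pi_i$-isomorphism once $k \ge \dim M + i$, by standard obstruction theory (using that $M \to P_k M$ is $(k+1)$-connected and that the relevant relative CW pair $(S^i \times M,\, S^i \times \partial M \cup \{*\} \times M)$ has relative dimension $\dim M + i$). The Postnikov fibration $P_k M \to P_{k-1} M$ with fiber $K(\pi_k M, k)$ induces a fibration $\mathcal{F}_k \to \mathcal{M}_k \to \mathcal{M}_{k-1}$ whose fiber is the based mapping space $\mathrm{Map}_*(M/\partial M,\, K(\pi_k M, k))$; no local-coefficient twisting is needed thanks to the trivial $\pi_1 M$-action on $\pi_k M$. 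Hence $\pi_j \mathcal{F}_k \cong H^{k-j}(M, \partial M; \pi_k M) \cong H_{\dim M - k + j}(M; \pi_k M)$, where the second isomorphism is Poincar\'e-Lefschetz duality (valid since $M$ is orientable and the coefficients are untwisted).

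By the universal coefficient theorem, for any finitely generated abelian coefficient group $A$ the homology $H_s(M; A)$ is finite whenever either $H_s(M;\mathbb Z)$ or $A$ is finite: the $\mathrm{Tor}$ summand of two finitely generated abelian groups is always finite, and the $\otimes$ summand is finite iff one factor is. Applied with $A = \pi_k M$ and $j \in \{i, i+1\}$: if $k \ge n$, then $\pi_k M$ is finite by hypothesis; if $k < n$, then $\dim M - k + j \ge \dim M - k + i > \dim M + i - n$, so $H_{\dim M - k + j}(M;\mathbb Z)$ is finite by the homological hypothesis. In either case $\pi_i \mathcal{F}_k$ and $\pi_{i+1} \mathcal{F}_k$ are finite for every $k$. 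Starting from $\mathcal{M}_0 = \{*\}$ for connected $M$ (the disconnected case reduces to components), an induction on $k$ using the long exact sequence of $\mathcal{F}_k \to \mathcal{M}_k \to \mathcal{M}_{k-1}$ yields finiteness of $\pi_i \mathcal{M}_k$ at each stage, and hence of $\pi_i G(M, \partial)$. The main technical point requiring care is the precise setup of the Postnikov tower of mapping spaces: verifying the fiber identification (which is sensitive to the trivial $\pi_1$-action hypothesis) and the obstruction-theoretic reduction $\mathrm{Map}((M,\partial M),(M,\id)) \to \mathcal{M}_k$ at large $k$.
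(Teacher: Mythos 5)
Your argument is correct, but it takes a genuinely different route from the paper's. The paper argues by contradiction using classical difference cochains: given infinitely many classes in $\pi_i G(M,\partial)$, it chooses adjoint representatives $\hat f_k\colon M\times D^i\to M$, passes to a subsequence agreeing on some $(l-1)$-skeleton but pairwise non-homotopic on the $l$-skeleton rel boundary, observes that the difference cochains $d(\hat f_k,\hat 1)$ live in $H^l(M\times D^i,\partial(M\times D^i);\pi_l M)\cong H_{\dim M+i-l}(M;\pi_l M)$, shows this group is finite using Poincar\'e--Lefschetz duality and the coefficient hypotheses, and derives a pigeonhole contradiction. You instead organize the same obstruction theory via the Postnikov tower of relative mapping spaces $\mathcal{M}_k$ into the sections $P_kM$, identify the fiber of $\mathcal{M}_k\to\mathcal{M}_{k-1}$ with $\mathrm{Map}_*(M/\partial M, K(\pi_kM,k))$ (triviality of the $\pi_1M$-action makes the Postnikov fibration principal, which is what makes this identification valid), and run the very same duality and coefficient computation to see that each $\pi_i\mathcal{F}_k$ is finite, finishing by a direct finite induction up the tower. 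The underlying finiteness computation is identical --- your $H^{k-i}(M,\partial M;\pi_kM)\cong H_{\dim M-k+i}(M;\pi_kM)$ matches the paper's group under the relabeling $l\leftrightarrow k$ --- so the two arguments agree on substance. What your formulation buys is that it eliminates the somewhat delicate ``pass to a subsequence agreeing on the $(l-1)$-skeleton and non-homotopic on the $l$-skeleton'' step, which the paper leaves implicit; what it costs is the extra setup you correctly flag, namely verifying that postcomposition by $\phi_k$ gives a $\pi_i$-isomorphism $\mathrm{Map}_\partial(M,M)\to\mathcal{M}_k$ for $k\ge\dim M+i$, and that the principal fibration induces the stated fibration of mapping spaces with the stated fiber.
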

\begin{proof} 
Arguing by contradiction suppose $\pi_i G(M, \partial)$
contains an infinite sequence of elements represented by maps 
$f_k\co (D^i, \partial D^i)\to G(M, \partial)$. 
The adjoint $\hat f_k\co M\times D^i\to M$ of the map $f_k$ 
restricts to the identity of $\partial (M\times D^{i})$.
Adjusting $f_k$ within its homotopy class and 
passing if necessary to a subsequence we can find $l\ge 1$ such 
that $\hat f_k$ all agree on the $(l-1)$-skeleton
and are pairwise non-homotopic on the $l$-skeleton rel boundary. 
Denote by $1$ the map sending $(D^i, \partial D^i)$
to the identity element of $G(M, \partial)$, and let $\hat 1$ be its adjoint.

The rest of the proof draws on the obstruction theory as e.g. in~\cite{MosTan}
which applies as $\pi_1(M)$ acts trivially on homotopy groups. 
The difference cochain $d(\hat f_k, \hat 1)$ that occurs in trying to homotope
$\hat f_k$ to  $\hat 1$ over the $l$-skeleton relative to the boundary is a cocycle representing
a class in the group $H^l(M\times D^{i}, \partial (M\times D^{i}); \pi_l M)$, which
by Poincar\'e-Lefschetz duality is isomorphic to 
$H_{\dim M+i-l}(M\times D^{i}; \pi_l M)\cong H_{\dim M+i-l}(M; \pi_l M)$. 

Let us show that $H_{\dim M+i-l}(M; \pi_l M)$ is finite.
If $l\ge n$, this follows from finiteness of $\pi_l M$ and compactness of $M$. 
If $l<n$, then $H_{\dim M+i-l}(M)$ is finite by assumption as $\dim M+i-l>\dim M+i-n$.
Since $\pi_l M$ is finitely generated
for all $l$, the group $H_{\dim M+i-l}(M; \pi_l M)$
is finite by the universal coefficients theorem. 

Hence passing to a subsequence we can assume that 
$d(\hat f_k, \hat 1)$ are all cohomologous, 
which by additivity of difference cochains implies
that $d(\hat f_k, \hat f_s)$ is a coboundary for all $s,k$.
Thus all $\hat f_k$ are homotopic on the $l$-skeleton
rel boundary, which contradicts the assumptions.
\end{proof}

The following result, combined with upper bounds on the rational homotopy 
of the diffeomorphism group obtained in Section~\ref{sec: invol, surg, block},
yields a lower bound on $\dim\ker\pi_i^\Q(\iota_N)$ in terms of rational homotopy
groups of stable pseudoisotopy spaces, which in many cases 
can be computed.

\begin{thm} 
\label{thm: localize}
If $E$ is a compact manifold,  and $k$, $i$ are integers 
such that $k\ge 0$, $i\ge 1$ and $\max\{2i+9, 3i+7\}<k+\dim \d E$, then
there is $\e=\e(E,i,k)\in\{0,1\}$ such that 
\[
\dim\ker\pi_i^\Q\big(\iota_{_{E\times {S^{k+\e}}}}\big)\ge 
\frac{\dim\pi_i^\Q {\mathscr P}(\d E)}{2}-\dim\pi_i^\Q\Diff (E\times D^{k+\e}, \d).
\]
\end{thm}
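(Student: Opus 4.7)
The plan is to construct a subspace of $\ker\pi_i^\Q(\iota_N)$ of dimension at least $\tfrac{1}{2}\dim\pi_i^\Q\mathscr P(\partial E)-\dim\pi_i^\Q\Diff(E\times D^{k+\epsilon},\partial)$. The idea is to apply the involution trick of Lemma~\ref{lem: involution}(2) to manufacture many classes in a finite-stage pseudoisotopy space of $\partial E$, transport them into $\pi_i^\Q P(\partial N)$ via extension by identity through a codimension zero embedding, and bound their image in $\pi_i^\Q\Diff N$ by factoring $\iota_N$ through $\Diff(E\times D^{k+\epsilon},\partial)$.

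Write $P=\partial E$ and $N=E\times S^{k+\epsilon}$, so $\partial N=P\times S^{k+\epsilon}$. Fix an embedded disk $D^{k+\epsilon}\subset S^{k+\epsilon}$ containing a cube $I^{k+\epsilon}$ in its interior, and set $A=P\times I^{k+\epsilon}\subset\partial N$. I apply Lemma~\ref{lem: involution}(2) with $M=P\times I^{k}$ and stabilization index $m=0$; the hypothesis $\dim M=\dim\partial E+k\ge\max\{2i+7,3i+4\}+1$ is exactly what is assumed. This yields $\epsilon\in\{0,1\}$ satisfying
\[
2\,\dim\operatorname{Im}\eta_i^{\epsilon}\;\ge\;\dim\pi_i^\Q\mathscr P(M)\;=\;\dim\pi_i^\Q\mathscr P(\partial E),
\]
where $\eta_i^{\epsilon}$ is the involution endomorphism of $\pi_i^\Q P(M\times I^{\epsilon})=\pi_i^\Q P(A)$. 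Since $\operatorname{Im}\eta_i^{\epsilon}$ lies in the image of $\pi_i^\Q P_\partial(A)\to\pi_i^\Q P(A)$ by the discussion in Section~\ref{sec: pi}, I fix a lift $V\subset\pi_i^\Q P_\partial(A)$ of dimension $\dim\operatorname{Im}\eta_i^{\epsilon}$.

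The second step pushes $V$ into $\pi_i^\Q P(\partial N)$ via the extension-by-identity homomorphism $e_0\co P_\partial(A)\to P(\partial N)$. Because $\dim A=\dim\partial N=\dim\partial E+k+\epsilon$ lies inside Igusa's stable range, the natural maps $\pi_i^\Q P(A)\to\pi_i^\Q\mathscr P(A)$ and $\pi_i^\Q P(\partial N)\to\pi_i^\Q\mathscr P(\partial N)$ are isomorphisms. Under these identifications the map induced by $e_0$ corresponds to the $\mathscr P$-functorial map associated with the inclusion $A\hookrightarrow\partial N$, which by homotopy invariance of $\mathscr P$ agrees with the one induced by the slice inclusion $P\hookrightarrow\partial N$; the latter admits the projection $\partial N\to P$ as a retraction and is therefore split injective on $\pi_i^\Q$. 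Consequently $W:=e_{0,*}(V)\subset\pi_i^\Q P(\partial N)$ has $\dim W=\dim V\ge\tfrac{1}{2}\dim\pi_i^\Q\mathscr P(\partial E)$.

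Finally, for $\phi\in P_\partial(A)$ the diffeomorphism $\iota_N(e_0(\phi))$ is supported in $A\times[0,1]$ inside the collar of $\partial N$ in $N$, and this region sits inside $E\times D^{k+\epsilon}$ because $I^{k+\epsilon}\subset\Int D^{k+\epsilon}$ and the collar parameter $[0,1]$ is absorbed into $E$. Since $\phi$ is the identity near all of $\partial(A\times I)$, the diffeomorphism $\iota_N(e_0(\phi))$ is the identity on both $\partial E\times D^{k+\epsilon}\subset\partial N$ and on a neighborhood of $E\times S^{k+\epsilon-1}=E\times\partial D^{k+\epsilon}$, hence on all of $\partial(E\times D^{k+\epsilon})$. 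Thus $\iota_N\circ e_0$ factors as $P_\partial(A)\to\Diff(E\times D^{k+\epsilon},\partial)\to\Diff N$, the second arrow being extension by identity, so $\dim\iota_{N,*}(W)\le\dim\pi_i^\Q\Diff(E\times D^{k+\epsilon},\partial)$. Since $\ker(\iota_{N,*}|_W)\subset\ker\pi_i^\Q(\iota_N)$ has dimension at least $\dim W-\dim\pi_i^\Q\Diff(E\times D^{k+\epsilon},\partial)$, the desired inequality follows. I expect the main obstacle to be the second step: carefully identifying the stabilized extension-by-identity map with the $\mathscr P$-functorial slice-inclusion map so that the retraction coming from the projection $\partial N\to P$ indeed provides injectivity on $\pi_i^\Q$.
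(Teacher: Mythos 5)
Your proof is correct and follows essentially the same route as the paper: apply Lemma~\ref{lem: involution}(2) to obtain a subspace of dimension at least $\tfrac12\dim\pi_i^{\Q}\mathscr P(\d E)$ inside $\pi_i^{\Q}P_{\partial}$ of the thickened boundary, push it into $\pi_i^{\Q}P(\d N)$ by extension-by-identity (with injectivity coming from the retraction of $\d N$ onto $\d E$ and Igusa's stability), and then observe that $\iota_N$ restricted to these classes factors through $\Diff(E\times D^{k+\e},\d)$. The one place you diverge is a pleasant simplification: you take the support region $A=\d E\times I^{k+\e}$ to lie \emph{strictly} inside $\d E\times D^{k+\e}$, so that $\iota_N(e_0(\phi))$ is literally the identity on a neighborhood of $\partial(E\times D^{k+\e})$ and the factorization through $\Diff(E\times D^{k+\e},\d)$ is immediate. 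The paper instead has the support touch $E\times\d D^{k+\e}$ and must pass through the subgroup $\Diff^J$ of diffeomorphisms with trivial $\infty$-jet along $E\times\d D^{k+\e}$, invoking the (nontrivial) weak equivalence $\Diff^J\hookrightarrow\Diff$; your shrunken-cube device removes this step. Both arguments rely on the same implicit identification of the stabilized extension-by-identity map with the $\mathscr P$-functorial map of the inclusion, which you correctly flag as the point requiring care, and which the paper glosses over in essentially the same way.
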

\begin{proof}
Set $d_i=\dim\pi_i^\Q {\mathscr P}(\d E)$. 
Lemma~\ref{lem: involution}(ii) applied to the manifold $D^k\times \d E$
shows the existence of $\e\in \{0,1\}$
such that the image of $\pi_i^\Q$-homomorphism
induced by the inclusion \[
P_{\mbox{\scriptsize$\hspace{-.4pt}\partial
\hspace{-.4pt}$}}(D^{k+\e}\times \d E)\to P(D^{k+\e}\times \d E)\]
has dimension $\ge \frac{d_i}{2}$.

Set $m=k+\e$ and $N=E\times S^m$. Let $D^m$ denote the upper hemisphere of $S^m$, and 
set $D=E\times D^m$ with the corners smoothed. Let $\Diff^{J}\!(D, \d)$ be the subgroup of $\Diff (D, \d)$
consisting of diffeomorphisms whose $\infty$-jet at $E\times \d D^m$
equals the $\infty$-jet of the identity map.
Following~\cite[Chapter 1, Proposition 1.3]{Igu-stability} one can show that
the inclusion $\Diff^J\hspace{-1pt}(D, \d)\to \Diff (D, \d)$ is a weak homotopy equivalence.
Consider the following commutative diagram of continuous maps
\begin{equation} 
\xymatrix{
P(D^m\times \d E)\ar[rd]_{\tau} &
P_{\mbox{\scriptsize$\hspace{-.4pt}\partial\hspace{-.4pt}$}}(D^m\times \d E)
\ar[l]^{\s} \ar[r]_{\!\!\!\iota} &
\Diff^J\hspace{-1pt}(D^m\times E, \partial)\ar[d]^\r
\\
&
P(S^m\times \d E)
\ar[r]^{\iota_N} &
\Diff(S^m\times E)
}
\end{equation}
in which $\s$ is the inclusion, the maps $\tau$, $\r$ extend diffeomorphisms by the identity,
and $\iota$ is the restriction of $\iota_N$. The reason we have to deal with
$\infty$-jets is that the extension of a diffeomorphism in $\Diff(D, \d)$
by the identity of $N$ is not a diffeomorphism.

The inclusions $\d E\to D^m\times\d E\to S^m\times\d E$ induce $\pi_i^\Q$-monomorphisms
of stable pseudoisotopy spaces as $S^m\times\d E$ retracts onto $\d E\to D^m$.
The same is true unstably since $i$ is in Igusa's stable range.
Thus there is a subspace $W$ of $\pi_i^\Q P_{\mbox{\scriptsize$\hspace{-.4pt}\partial\hspace{-.4pt}$}}(D^m\times \d E)$ of dimension $\ge \frac{d_i}{2}$
that is mapped isomorphically to a subspace $U$ of $\pi_i^\Q P(S^m\times \d E)$ by $\tau\circ\sigma$.
Hence the kernel of $\pi_i^\Q(\iota)\vert_{W}$ embeds
into the kernel $\pi_i^\Q(\iota_N)\vert_U$, and the kernel of 
$\pi_i^\Q(\iota)\vert_W$ clearly satisfies
the claimed inequality.
\end{proof}

\begin{rmk} Sadly, there is not a single example of $E$, $i$, $k$ with indecomposable $\Int E$
for which we know the value of $\e$.
\end{rmk}
	
\begin{prop}
\label{prop: signly generated}
Let $E$ be the total space of a linear disk bundle over a closed manifold 
such that $E$ and $\d E$ are simply-connected,
the algebra $H^*(E;\Q)$  has a single generator, and 
the algebra $H^*(\d E;\Q)$  does not have a single generator.  Then 
there are sequences $i_l$, $m_l$ such that the sequence
 $\dim\ker\pi_{i_l}^\Q\big(\iota_{_{E\times{S^{^{m_l}}}}}\big)$ is unbounded.
\end{prop}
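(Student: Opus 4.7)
The plan is to apply Theorem~\ref{thm: localize} along a carefully chosen sequence $(i_l,k_l)$ and argue that the resulting lower bound on $\dim\ker\pi_{i_l}^\Q\big(\iota_{_{E\times{S^{^{m_l}}}}}\big)$ is unbounded. For each $l$, pick an integer $i_l$ and then $k_l\gg i_l$ large enough that the stability conditions of Theorem~\ref{thm: localize}, Theorem~\ref{thm: bound on pi_i Diff}, and Igusa's stability theorem are all met (say $k_l\ge 3i_l+10$). Theorem~\ref{thm: localize} supplies $\e_l\in\{0,1\}$ for which, setting $m_l=k_l+\e_l$,
\[
\dim\ker\pi_{i_l}^\Q\big(\iota_{_{E\times{S^{^{m_l}}}}}\big)\;\ge\;\tfrac{1}{2}\dim\pi_{i_l}^\Q\mathscr P(\d E)\;-\;\dim\pi_{i_l}^\Q\Diff(E\times D^{m_l},\d).
\]
I would choose $\{i_l\}$ so that the first term on the right tends to $\infty$ while the second stays bounded by a constant depending only on $E$.

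For the first term, the hypothesis that $H^*(\d E;\Q)$ has more than one generator, combined with the theorem of Vigu\'e-Poirrier and Sullivan on Betti numbers of free loop spaces, gives that $\dim HH_n(\d E;\Q)\cong\dim H_n(L\d E;\Q)$ is unbounded in $n$. Connes' $SBI$ exact sequence yields $\dim HH_{n-1}\le\dim HC_{n-1}+\dim HC_{n-2}$, so $\dim HC_n(\d E;\Q)\cong\dim H_n^{S^1}(L\d E;\Q)$ must also be unbounded. Corollary~\ref{cor: Goodwillie rel} then forces unboundedness of $\dim\pi_n^\Q(A(\d E),A(\ast))$, and the Waldhausen splitting~(\ref{form: waldhausen}) together with the finite-dimensionality of $H_*(\d E;\Q)$ produces the desired sequence $i_l\to\infty$ along which $\dim\pi_{i_l}^\Q\mathscr P(\d E)\to\infty$.

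For the second term, I would apply Theorem~\ref{thm: bound on pi_i Diff} to $M=E\times D^{m_l}$ and bound each of its four summands uniformly in $l$. Since $H^*(E;\Q)$ is a truncated single-generator algebra, $E$ is rationally elliptic with finitely generated rational homotopy, so Proposition~\ref{prop: bound on G} (and the remark following it) makes $\pi_{i_l}^\Q G(E\times D^{m_l},\d)=0$ once $i_l$ exceeds a fixed threshold depending only on $E$. The $L$-theory summand contributes at most one rational dimension per degree because $E\times D^{m_l}$ is simply-connected, and the homology summand is bounded by $\dim H_*(E;\Q)$. By homotopy invariance of stable pseudoisotopy and Igusa's stability (which $m_l\gg i_l$ secures), $\dim\pi_{i_l}^\Q\mathscr P(E\times D^{m_l})=\dim\pi_{i_l}^\Q\mathscr P(E)$; the single-generator hypothesis on $H^*(E;\Q)$, via the explicit Burghelea--Vigu\'e computations extending~(\ref{form: P(CPd)}) and~(\ref{form: P(HPd)}), gives a uniform bound on $\dim\pi_i^\Q\mathscr P(E)$.

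Combining these steps, the right-hand side of the displayed inequality is at least $\tfrac{1}{2}\dim\pi_{i_l}^\Q\mathscr P(\d E)-C(E)$, and this tends to infinity. The main obstacle is pinning down the uniform bound on $\dim\pi_i^\Q\mathscr P(E)$: iterating Connes' sequence from a bounded-$HH$ hypothesis yields only a linear bound on $HC$, so one must instead invoke the explicit Sullivan-model Burghelea--Vigu\'e computation for truncated single-generator algebras to obtain a genuine constant bound.
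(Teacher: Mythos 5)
Your proposal is correct and follows essentially the same path as the paper: the lower bound from Theorem~\ref{thm: localize}, unboundedness of $\dim\pi_i^\Q\mathscr P(\d E)$ via cyclic homology and the Goodwillie/Waldhausen correspondence, and the uniform upper bound on $\dim\pi_i^\Q\Diff(E\times D^m,\d)$ from Theorem~\ref{thm: bound on pi_i Diff} plus rational ellipticity and Proposition~\ref{prop: bound on G}. The one small variation is that you derive unboundedness of $\dim HC_n(\d E;\Q)$ from the Vigu\'e-Poirrier--Sullivan theorem on $HH_*=H_*(L\d E;\Q)$ via Connes' $SBI$ sequence, whereas the paper cites Vigu\'e-Poirrier--Burghelea directly for the $HC$-statement; as you yourself observe at the end, the boundedness of $\dim HC_n(E;\Q)$ cannot be extracted from $SBI$ alone and still requires the explicit Burghelea--Vigu\'e Sullivan-model calculation, which is exactly what the paper invokes (their Corollary 2) for both the bounded and unbounded directions simultaneously.
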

\begin{proof} 
By~\cite[Corollary 2]{VPBur} the sequence 
$\dim HC_i(E;\Q)$ is bounded while $\dim HC_i(\d E;\Q)$ is unbounded. 
Since $0\le \dim\pi_i^\Q {\mathscr P}(\ast)\le 1$, we conclude (see Section~\ref{sec: rat hom pi})
that the sequence $\dim\pi_i^\Q{\mathscr P}(E)$ is bounded and  $\dim\pi_i^\Q{\mathscr P}(\d E)$
is unbounded. 
The class of rationally elliptic spaces contains all
closed manifolds whose rational cohomology algebra has $\le 2$ generators, 
and is closed under fibrations, see~\cite{FHT}, so $E$ is rationally elliptic.
Hence Proposition~\ref{prop: bound on G} applies for all sufficiently large $i$  and any $m$,
and we have $\pi_i^\Q G(E\times D^m,\d )=0$, which by Theorem~\ref{thm: bound on pi_i Diff}
gives a uniform upper bound on $\dim \pi_i^\Q\Diff(E\times D^m, \d)$, and 
the result follows from Theorem~\ref{thm: localize}.
\end{proof}

\begin{rmk}
If in Proposition~\ref{prop: signly generated} 
the algebras $H^*(\d E;\Q)$, $H^*(E;\Q)$ are singly generated, 
we can still compute the dimensions of $\pi_i^\Q{\mathscr P}(\d E)$, $\pi_i^\Q{\mathscr P}(E)$
using~\cite[Theorem B]{VPBur}. In view of Section~\ref{sec: invol, surg, block}
and Theorem~\ref{thm: localize} this gives a computable lower bound on
$\dim\ker\pi_{i}^\Q\big(\iota_{_{E\times{S^{^m}}}}\big)$; of course the bound
might be zero.
\end{rmk}

Let us investigate when Proposition~\ref{prop: signly generated} does not apply.

\begin{lem}
\label{lem: sphere bundle singly generated}
Let $p\co T\to B$ be a linear $S^{k}$-bundle over a closed manifold $B$
with $\dim B>0$ 
such that $T$, $B$ are simply-connected 
and $H^*(T; \Q)$ is singly generated, and let $e$ be 
the rational Euler class of $p$. Then $k<\dim B$ and  the following holds:
\vspace{-6pt}
\begin{itemize}
\item[\textup{(1)}] If $B=S^d$, then either $e=0$ and $\frac{d}{2}=k$ is even, \newline
\phantom{\textup{(1)}\qquad\qquad\qquad\ \ \ }\hspace{1pt} 
or $e\neq 0$ and $d=k+1$ is even.
\item[\textup{(2)}] If $B=CP^d$ with $d\ge 2$, then $k=1$ and $e\neq 0$.
\item[\textup{(3)}] If $B=HP^d$ with $d\ge 2$, then either $k=2$, or $k=3$ and $e\neq 0$.
\end{itemize}
\end{lem}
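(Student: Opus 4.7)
The tool is the Serre spectral sequence of the sphere bundle $S^k\to T\to B$, whose only potentially nontrivial differential is the transgression $d_{k+1}(y\alpha)=e\alpha$, where $y$ is the fibre class in degree $k$ and $e\in H^{k+1}(B;\Q)$ is the rational Euler class. Write $H^*(B;\Q)=\Q[\beta]/(\beta^{d+1})$, with the convention $\beta^2=0$ when $B=S^d$. The Poincar\'e polynomial of $\Q[\gamma]/(\gamma^{n+1})$ is $\sum_{i=0}^n t^{i|\gamma|}$, with coefficient $1$ in every non-vanishing degree. The plan is to compare this polynomial against the one produced for $H^*(T;\Q)$ by the spectral sequence, and then to use the extra multiplicative constraints ($\beta^{d+1}=0$ inherited from $B$, and $y^2=0$ when $k$ is odd) to kill the cases that survive the numerical comparison. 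The bound $k<\dim B$ will then drop out automatically.

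For $B=S^d$, if $e=0$ the Poincar\'e polynomial is $1+t^k+t^d+t^{k+d}$, which can match $\sum_i t^{i|\gamma|}$ only when $d=2k$ (with $\gamma=y$) or $k=2d$ (with $\gamma=\beta$). The second option fails because $\beta$ pulls back from $S^d$ where already $\beta^2=0$, forcing $\gamma^2=0$ in $T$; the first requires $y^2\neq 0$, forcing $k$ to be even by graded commutativity. If $e\neq 0$, then $e\in H^{k+1}(S^d;\Q)\setminus\{0\}$ forces $d=k+1$, and since the rational Euler class of an orientable real vector bundle of odd rank vanishes, $k+1$ must be even. In that case the Gysin sequence shows $H^*(T;\Q)$ is $\Q$ in degrees $0$ and $2k+1$ and zero otherwise, automatically singly generated.

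For $B=CP^d$ and $B=HP^d$ set $s=|\beta|\in\{2,4\}$. The rational Euler class is nonzero only when $k+1=ls$ for some $l\ge 1$, in which case $e=c\beta^l$ and $k=ls-1$ is odd. The spectral sequence then concentrates $H^*(T;\Q)$ in the even-degree block $\{0,s,\ldots,(l-1)s\}$ (coming from $\mathrm{coker}(\cdot e)$) together with the odd-degree block $\{\dim B+k-(l-1)s,\ldots,\dim B+k\}$ (coming from $\ker(\cdot e)$), each of size $l$. Matching this with $\sum_i t^{i|\gamma|}$ forces $|\gamma|$ to divide both even and odd numbers, hence $|\gamma|$ is odd, so $\gamma^2=0$ and the total dimension of $H^*(T;\Q)$ is at most $2$; combined with the total dimension $2l$ this gives $l=1$, yielding $k=1$ for $CP^d$ and $k=3$ for $HP^d$. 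When instead $e=0$ and $k$ is odd, $y^2=0$ and $H^*(T;\Q)\cong H^*(B;\Q)\otimes\Lambda(y)$ has $2(d+1)\ge 6$ non-vanishing degrees of mixed parity, incompatible with odd $|\gamma|$ (which permits at most two nontrivial degrees). When $e=0$ and $k$ is even: if $k=ms$ with $1\le m\le d$, then $H^k(T;\Q)$ contains both $\beta^m$ and $y$, giving dimension two; if $k$ is even but not a multiple of $s$, matching parities forces $|\gamma|\mid \gcd(s,k)$ with $H^{\gcd(s,k)}(T;\Q)=0$, except in the special case $B=HP^d$, $k=2$; and if $k>\dim B$, the generator would have to be $\gamma=c\beta\in H^s(T;\Q)$ but then $\gamma^{d+1}=c^{d+1}\beta^{d+1}=0$ fails to produce the nonzero class $y\in H^k(T;\Q)$. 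The sole surviving possibility is $B=HP^d$ with $k=2$, where $|y|=2$ and the relation $y^2=\lambda z$ with $\lambda\ne 0$ makes $y$ the desired single generator.

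The main obstacle is the ring-theoretic analysis beyond matching Poincar\'e polynomials, most delicately in the $B=HP^d$ case with $k=2$, where one must track $y^2\in H^4(T;\Q)$ carefully and verify that $y$ generates the full ring precisely when this class is a nonzero multiple of $z$. The bound $k<\dim B$ is immediate in each surviving case: $k=d/2$ or $k=d-1$ in case (1), and $k\in\{1,2,3\}<\dim CP^d,\dim HP^d$ when $d\ge 2$.
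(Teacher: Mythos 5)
Your proof is correct and uses the same core tool as the paper: the Gysin sequence (equivalently, the Serre spectral sequence of the sphere bundle) together with the multiplicative constraints forced by a singly generated rational cohomology algebra, though you organize the case analysis around matching Poincar\'e polynomials while the paper establishes $k<\dim B$ up front and then runs through the cases with shorter ad hoc arguments. One clarifying remark on the ``obstacle'' you flag at the end concerning $B=HP^d$, $k=2$: it is not one. The lemma is only the implication from ``$H^*(T;\Q)$ singly generated'' to constraints on $(k,e)$, so there is no need to verify that $y$ generates the full ring when $y^2$ is a nonzero multiple of the pullback of the generator of $H^4(HP^d;\Q)$; it suffices that your elimination argument leaves $k=2$ as a surviving possibility, which it does.
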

\begin{proof} This is a straightforward application of the Gysin sequence 
{\tiny\begin{equation*}
\xymatrix{
\textup{\!\!\!(\bf{G})}\ \ \ H^{j-k-1}(B;\Q)\ar[r]_{\quad\cup e} & H^j(B; \Q) \ar[r]_{p^*} & H^j(T; \Q) 
\ar[r] & H^{j-k}(B; \Q)\ar[r]_{\!\cup e} & H^{j+1}(B; \Q).
}
\end{equation*}}

\vspace{-12pt}

If $\dim B\le k$, then $e=0$ for dimension reasons, so $p^*$ is injective and
$H^k(T; \Q)$ surjects onto $H^0(B; \Q)\cong \Q$.  
If $\dim B=k$, then $\dim H^k(T; \Q)=2$ contradicting that $H^*(T; \Q)$ is singly generated.
If $\dim B<k$ and $H^*(T; \Q)=\langle a\rangle$, then $a$ has degree $\le \dim B<k$ and hence $a\in\im p^*$
so that $p^*$ is a surjection
of $H^k(B; \Q)=0$ onto $H^k(T; \Q)\cong \Q$, which is a contradiction. 
Thus $k<\dim B$.

Let $B=S^d$. Then ({\bf G}) implies $H^j(T; \Q)=0$ except for $j=0\, ,\, k+d$, and possibly for 
$j=k, d$. If $e\neq 0$, then $d=k+1$ is even, and ({\bf G}) gives
$H^k(T;\Q)=0=H^d(T;\Q)$. 
If $e=0$, then ({\bf G}) shows that $H^j(T; \Q)$ are nonzero for $j=k, d$. 
Since $H^*(T; \Q)$ is singly generated, $k$, $d$ 
must be even because an odd degree class is not a power of an
even degree class, and any odd degree class has zero square. 
As $k<d$, we have $d=2k$ completing the proof of (1).

To prove (2) let $B=CP^d$ and note that simple connectedness of $T$ shows that
if $k=1$, then $e\neq 0$. To rule out $k\ge 2$ use 
({\bf G}) to conclude that
$p^*\co H^2(B)\to H^2(T)$ is injective, hence as $H^*(T;\Q)$ is singly generated,
the generator must come from $B$ and hence its $(n+1)$th power is zero,
but then it cannot generate the top dimensional class
in degree $\dim T=k+\dim B\ge 2+2n$.

To prove (3) let $B=HP^d$. Similarly to (2) if $k\ge 4$, then
$H^*(T;\Q)$ is not singly generated. The same
holds for $k=1$ as then $T\to B$ is the trivial $S^1$-bundle because $HP^d$
is $2$-connected. Thus $k$ must equal $2$ or $3$. 
Finally, if $e$ were zero for $k=3$, then 
({\bf G}) gives that $H^3(T;\Q)$ and $H^4(T;\Q)$
are nonzero, so $H^*(T;\Q)$ could not be singly generated.
\end{proof}

\begin{rmk}
(a) The exceptional cases above do happen. 
Examples are the unit tangent bundle to $S^{d}$ with $d$ even (whose total space
is a rational homology sphere),
the Hopf bundles {\footnotesize $S^1\to S^{2d+1}\to CP^d$} and 
{\footnotesize $S^3\to S^{4d+3}\to HP^d$},
and the canonical $S^1$ quotient {\footnotesize $S^2\to CP^{2d+1}\to HP^d$} of the latter bundle. 
All nontrivial $S^2$-bundles over $S^4$ have singly generated total space,
see~\cite[Corollary 3.9]{GroZil}.
Each of these total spaces
appears as $\d E$ where $\Int E$ admits a complete metric of $K\ge 0$.

(b) The assumption that $B=S^n, CP^n$ or $HP^n$
is there only to simplify notations
by excluding some cases not relevant to our geometric applications. 
The proof of Lemma~\ref{lem: sphere bundle singly generated}
applies to some other bases, e.g. the Cayley plane or biquotients
with singly generated cohomology, which are classified in~\cite{KapZil}. 
In particular, the unit tangent
bundle to the Cayley plane does not have singly generated cohomology.

(c) One can use results of~\cite{Hal-rat-fibr} to give a rational characterization of
fiber bundles $T\to B$ such that $T$, $B$ are simply-connected manifolds and $H^*(T; \Q)$
is singly generated. We will not pursue this matter because with the exception 
mentioned in (b) it is unclear if such bundles
arise in the context of nonnegative curvature.
\end{rmk}

\begin{thm}
\label{thm: bounds on P(E) and P(dE)}
Let $N=S^m\times E$ and $i\le m-3$ where $E$ and $i$ satisfy one of the following:
\vspace{-3pt}
\begin{itemize}
\item[\textup{1.}]
$E$ is  the total space of a linear $D^{2d}$-bundle over $S^{2d}$, $d\ge 2$,
with nonzero Euler class, and $i=8d-5+j(4d-2)$ for some odd $j\ge 1$. 
\vspace{2pt}
\item[\textup{2.}]
$E$ is the total space of a linear $D^4$-bundle over $HP^{d}$, $d\ge 1$, with
nonzero Euler class, 
and $i=8d+3+j(4d+2)$ for some odd $j\ge 1$. 
\vspace{2pt}
\item[\textup{3.}]
$E$ is the total space a linear $D^3$-bundle over $HP^d$, $d\ge 1$
with nonzero first Pontryagin class,
and $i=4d+2+j(2d+1)$ for some even $j\ge 0$. 
\vspace{2pt}
\item[\textup{4.}]
$E$ is the product of $S^4$ and
the total space of a $D^4$-bundle over $S^4$
with nonzero Euler class, and $i=6j+3$ for some
odd $j\ge 3$. 
\end{itemize}
\vspace{-3pt}
Then $\pi_i^\Q\mathscr P(N)=0$, and furthermore, 
$\dim\pi_i^\Q\mathscr P(\d N)=1$ in the cases 
\textup{(1)}, \textup{(2)}, \textup{(3)} and $\dim\pi_i^\Q\mathscr P(\d N)=j$
in the case \textup{(4)}.
\end{thm}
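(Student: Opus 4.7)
The computation reduces via Corollary~\ref{cor: Goodwillie rel} to rational cyclic homology, using the Poincaré series recorded in Section~\ref{sec: rat hom pi}. In each of the four cases a short arithmetic check confirms $i\not\equiv 3\pmod 4$ (for instance, in case (4), $j$ odd forces $6j+3\equiv 1\pmod 4$), so $\pi_i^\Q\mathscr P(\ast)=0$ and it suffices to analyze the relative groups $\pi_i^\Q(\mathscr P(-),\mathscr P(\ast))$.

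First I would identify the rational homotopy types. Each $E$ deformation retracts onto its base, so $N\simeq_\Q S^m\times B$ with $B$ equal respectively to $S^{2d}$, $HP^d$, $HP^d$, $S^4\times S^4$. The rational type of $\d E$ follows from the Gysin sequence of the associated sphere bundle: in (1) and (2) nonzero Euler class yields $\d E\simeq_\Q S^{4d-1}$ and $S^{4d+3}$; in (3) the Euler class of an $S^2$-bundle over $HP^d$ lies in $H^3(HP^d;\Z)=0$, so the Gysin sequence splits and $\d E\simeq_\Q HP^d\times S^2$; in (4), $\d E=S^4\times\d F\simeq_\Q S^4\times S^7$.

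By Corollary~\ref{cor: Goodwillie rel} and the isomorphism (\ref{form: waldhausen}),
\[
\dim\pi_i^\Q\bigl(\mathscr P(M),\mathscr P(\ast)\bigr)=\dim HC_{i+1}(M,\ast;\Q)-\dim H_{i+2}(M;\Q)
\]
for any simply-connected compact manifold $M$. Since $HC_*(-;\Q)=H_*^{S^1}(L-;\Q)$ is a rational homotopy invariant in the simply-connected finite type setting, one may substitute the models of the previous paragraph. The Burghelea--Fiedorowicz Künneth formula for $HC_*$ then applies, one factor always having quasifree cyclic homology by dimension reasons (as for $S^4$ in Section~\ref{sec: rat hom pi}). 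The inputs are (\ref{form: P(even sphere)}), (\ref{form: P(odd sphere)}), (\ref{form: P(HPd)}), and (\ref{form: P(CPd)}) (the latter applied to $S^2$ in case (3)), supplemented by (\ref{form: P(S4xS4)}) and (\ref{form: P(S7xS4)}) in case (4). The hypothesis $i\le m-3$ guarantees that all contributions of the $S^m$-factor to the Künneth product occur in degrees exceeding $i$, so that the coefficient of $t^i$ in the relevant series is unaffected by the presence of $S^m$.

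The final step is to read off the coefficient of $t^i$. The arithmetic progressions prescribed in (1)--(3) are arranged so that the $N$-side series vanishes at $t^i$ while the $\d N$-side retains a single class; in (4), the term $t^9/(1-t^6)^2$ in (\ref{form: P(S7xS4)}) contributes exactly $j$ classes at $t^{6j+3}$, the restriction $j\ge3$ being needed to escape the correction summand $-t^9$. The main obstacle is this bookkeeping: multiplying out the generating functions, verifying the $N$-side vanishing, and checking that the parity and range restrictions on $j$ (odd in cases (1), (2), (4); even in case (3)) yield precisely the stated dimensions. A secondary subtlety is the justification of the ``$S^m$-stability'' in Step~3, which follows from the fact that the nonconstant part of the Poincaré series of $HC_*(S^m;\Q)$ is concentrated in degrees $\ge m$.
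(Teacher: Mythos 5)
Your overall strategy matches the paper's — reduce from $N=S^m\times E$ to $E$ (resp. $\partial N$ to $\partial E$), identify rational homotopy types, compute via cyclic homology Poincar\'e series — but there is a genuine error in case (3).

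You assert that since the Euler class of the $S^2$-bundle $\partial E\to HP^d$ lies in $H^3(HP^d;\mathbb Z)=0$, the Gysin sequence splits and therefore $\partial E\simeq_\Q HP^d\times S^2$. The splitting of the Gysin sequence only gives an \emph{additive} isomorphism of rational cohomology; it says nothing about the ring structure, and hence nothing about the rational homotopy type. The hypothesis $p_1\neq 0$ in case (3) is present precisely to pin down that ring structure. By Massey's theorem on cohomology rings of even sphere bundles (cited in the paper), the distinguished class $a\in H^2(\partial E;\Q)$ satisfies $a^2=\pm p^\ast p_1\neq 0$, and consequently $H^\ast(\partial E;\Q)\cong\Q[a]/(a^{2d+2})$ is \emph{singly generated}, i.e.\ $\partial E\simeq_\Q CP^{2d+1}$. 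By contrast, in $HP^d\times S^2$ the degree-$2$ generator squares to zero. These two spaces have the same Betti numbers but different rational homotopy groups (e.g.\ $HP^d\times S^2$ has nonzero $\pi_3^\Q$ and $\pi_4^\Q$, while $CP^{2d+1}$ does not), so their cyclic homologies differ and your K\"unneth computation would give the wrong Poincar\'e series. The paper instead applies formula (\ref{form: P(CPd)}) directly to the rational $CP^{2d+1}$.

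Two smaller remarks. First, the paper reduces from $N$ to $E$ not via K\"unneth but by observing that the inclusion $E\to N$ is $(m-1)$-connected, hence induces isomorphisms on $\pi_i^\Q\mathscr P(-)$ for $i\le m-3$; your K\"unneth-based ``$S^m$-stability'' argument can be made to work but requires more careful bookkeeping of where the $S^m$-contribution starts (it is $m-1$, not $m$, since $HC_\ast(S^m,\ast;\Q)$ begins in degree $m-1$). Second, in cases (1), (2), (4) your identifications of $\partial E$ are correct, but note that there you are invoking a rational homology isomorphism and then upgrading it to a rational homotopy equivalence; this upgrade is automatic for simply-connected rational homology spheres, but for the $S^4\times S^7$ case it requires the degree-one map to be $2$-connected so that Corollary~\ref{cor: pi and rational he} can be applied — the paper is explicit about this point.
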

\begin{proof}
The inclusions $E\to N$ and $\d E\to \d N$ are $(m-1)$-connected,
so they induce isomorphisms on $\pi_i^\Q\mathscr P(-)$ for $i\le m-3$.

{\bf Case 1.} Here $\d E$ is the total space of
$S^{2d-1}$-bundle over $S^{2d}$, so the homotopy sequence of the bundle
shows that $\d E$ is $2$-connected while the Gysin sequence implies
$H^*(\partial E;\mathbb Q)\cong H^*(S^{4d-1};\mathbb Q)$. 
So any degree one map $\d E\to S^{4d-1}$
is a rational homology isomorphism, and hence a rational homotopy equivalence.
The map is $2$-connected, so by
Corollary~\ref{cor: pi and rational he} it induces 
an isomorphism on $\pi_i^\Q\mathscr P(-)$.
Now
(\ref{form: P(pt)}), (\ref{form: P(even sphere)}), (\ref{form: P(odd sphere)})
give the Poincar{\'e} polynomials
{\small\begin{equation*}
\frac{t^3}{1-t^4}+\frac{t^{6d-4}}{1-t^{4d-2}}\ \ \text{for $\pi_*^\Q{\mathscr P}(S^{2d})$}\quad\
\text{and}\quad\
\frac{t^{8d-5}}{1-t^{4d-2}}\ \ \text{for $\pi_*^\Q(\mathscr P(S^{4d-1}), \mathscr P(\ast))$}.
\end{equation*}}
\vspace{-13pt}

Reducing the exponents mod $4$ yields the desired conclusion.

{\bf Case 2.} Here $\d E$ is a a simply-connected rational homology $S^{4d+3}$.
Then (\ref{form: P(pt)}), (\ref{form: P(HPd)}), (\ref{form: P(odd sphere)}) give
the Poincar{\'e} polynomials
{\small\begin{equation*}
\frac{t^3}{1-t^4}+\frac{(1-t^{4d})\,{ t^{4d+4}}}{(1-t^4)\,(1-t^{4d+2})}
\ \ \text{for $\pi_*^\Q{\mathscr P}(HP^d)$}
\end{equation*}}
{\small\begin{equation*}
\frac{t^{8d+3}}{1-t^{4d+2}}\ \ \text{for $\pi_*^\Q(\mathscr P(S^{4d+3}), \mathscr P(\ast))$}.
\end{equation*}}
\vspace{-13pt}

Reducing the exponents mod $4$ implies the claim.

{\bf Case 3.}  
Nontriviality of the first Pontryagin class implies, see~\cite[pp. 273-274]{Mas-sph-bund},
that the algebra $H^*(\d E;\Q)$ is isomorphic to $\Q[\a]/\a^{2d+2}$ 
for some $\a\in H^2(\d E;\Q)$. Then 
(\ref{form: P(pt)}), (\ref{form: P(HPd)}), (\ref{form: P(CPd)}) give
the Poincar{\'e} polynomials
{\small\begin{equation}
\label{form: HPd pi}
\frac{t^3}{1-t^4}+\frac{(1-t^{4d})\,{ t^{4d+4}}}{(1-t^4)\,(1-t^{4d+2})}
\ \ \text{for $\pi_*^\Q{\mathscr P}(HP^d)$}
\end{equation}}
{\small\begin{equation}
\label{CPn pi}
\frac{t^{2(2d+1)}}{1-t^{2}}\ \ \text{for $\pi_*^\Q(\mathscr P(\d E), \mathscr P(\ast))$}.
\end{equation}}
\vspace{-13pt}

The monomials with exponent $i$  appear in (\ref{CPn pi}) and do not occur in
the first summand of (\ref{form: HPd pi}). The second summand can be written as
$\sum_{s=1}^d t^{4d+4s}\sum_{r\ge 0} t^{(4d+2)r}$,
hence the exponents of its monomials are $4d+4s+(4d+2)r$
which all lie in the union of the disjoint intervals $[4d+4+(4d+2)r,\, 8d+(4d+2)r]$.
Each number $4d+2+(4d+2)r$ lies in the gap between the intervals, so
letting $j=2r$ completes the proof. In fact, many more values of $i$ are allowed
because the exponents $4d+4s+(4d+2)r$ of distinct pairs $(s,r)$ differ by $4$ or $6$
while (\ref{CPn pi}) contains every even exponent $\ge 4d+2$.

{\bf Case 4.}  Here $\d E$ is $2$-connected rational homology $S^4\times S^7$. 
Then (\ref{form: P(pt)}), (\ref{form: P(S4xS4)}), (\ref{form: P(S7xS4)})
give the Poincar{\'e} polynomials 
{\small \begin{equation}
\label{form: P(S4xS4) bis}
\frac{t^3}{1-t^4}+\frac{2t^2}{1-t^6}+
\frac{t^5+t^6}{(1-t^6)^2}-2t^2-t^6 \ \ \ \text{for $\pi_*^\Q{\mathscr P}(S^4\times S^4)$}
\end{equation}}
{\small \begin{equation}
\label{form: P(S7xS4) bis}
\frac{t^5}{1-t^6}+
\frac{t^2+t^9}{(1-t^6)^2}-t^2-t^5-t^9
\ \ \ \text{for $\pi_*^\Q(\mathscr P(\d E), \mathscr P(\ast))$}.\end{equation}}
\vspace{-5pt}

The term {\small\[
\frac{t^9}{(1-t^6)^2}-t^9=
\displaystyle{\sum_{j\ge 2}} j\,t^{6j+3}\]} \vspace{-5pt}

in (\ref{form: P(S7xS4) bis}) has exponents
that reduce to $3$ mod $6$, so it has some common exponents only with the term
$t^3(1-t^4)^{-1}$ in (\ref{form: P(S4xS4) bis}). 
The exponents corresponding to odd $j$ reduce to $1$ mod $4$,
so do not appear in (\ref{form: P(S4xS4) bis}). For the same reasons the exponents
do not appear elsewhere in (\ref{form: P(S7xS4) bis}), which completes the proof.
\end{proof}

\begin{rmk}
Case 4 illustrates that the following proposition
is not optimal.
\end{rmk}

\begin{prop}
\label{prop: euler char}
Let $M$ be a compact manifold with nonempty boundary and let $B$ be a closed $b$-dimensional manifold
of nonzero Euler characteristic. If $\max\{2i+7, 3i+4\}<\dim \d M$, then $\dim\ker\pi_i^\Q\big(\iota_{_{M\times B}}\big)\ge
\dim\ker\pi_i^\Q\big(\iota_{_{M}}\big)$.
\end{prop}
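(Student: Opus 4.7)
The plan is to build a commutative square relating $\iota_M$ and $\iota_{M\times B}$ through ``product with $\id_B$'' and to show the left vertical map is rationally injective. I would define $\mu_B\co P(\d M)\to P(\d M\times B)$ by sending a pseudoisotopy $f\co \d M\times I\to \d M\times I$ to $f\times\id_B$ (viewed as a pseudoisotopy of $\d M\times B$ after swapping the $I$ factor back to the last position), and analogously $\mu_B\co \Diff M\to\Diff(M\times B)$ by $\phi\mapsto \phi\times\id_B$. Both are continuous homomorphisms and, by inspection, fit into a commutative diagram
\[
\xymatrix{
P(\d M)\ar[r]^-{\iota_M}\ar[d]_{\mu_B} & \Diff M\ar[d]^{\mu_B}\\
P(\d(M\times B))\ar[r]^-{\iota_{M\times B}} & \Diff(M\times B).
}
\]
Applying $\pi_i^\Q$ and restricting, $\mu_B$ maps $\ker\pi_i^\Q(\iota_M)$ into $\ker\pi_i^\Q(\iota_{M\times B})$, so the proposition reduces to showing this restriction is injective.

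Next I would reduce to stable pseudoisotopies: because $i$ is in Igusa's stability range (the hypothesis $\max\{2i+7,3i+4\}<\dim\d M$), the stabilizations $P(\d M)\to\mathscr P(\d M)$ and $P(\d M\times B)\to\mathscr P(\d M\times B)$ are $\pi_i^\Q$-isomorphisms, so it suffices to show that the stabilized map $\mu_B\co \mathscr P(\d M)\to\mathscr P(\d M\times B)$ is $\pi_i^\Q$-injective. My intended retraction is $\mathscr P(\pi)\co \mathscr P(\d M\times B)\to\mathscr P(\d M)$, where $\pi\co \d M\times B\to\d M$ is the projection and $\mathscr P(\pi)$ is its functorially induced map from Section~\ref{sec: pi}. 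The key identity I would use is the Becker--Gottlieb-type formula
\begin{equation*}
\mathscr P(\pi)\circ\mu_B=\chi(B)\cdot\id\quad\text{on}\quad \pi_*^\Q\mathscr P(\d M),
\end{equation*}
whose validity makes the left-hand side a rational isomorphism (since $\chi(B)\neq 0$), forcing $\mu_B$ to be rationally injective.

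The hard part will be establishing the identity $\mathscr P(\pi)\circ\mu_B=\chi(B)\cdot\id$ at the stable pseudoisotopy level. It is transparent when $B$ is a finite disjoint union of $k$ points: there $\mu_B$ is the $k$-fold diagonal, $\mathscr P(\pi)$ is the corresponding sum, and the composition is multiplication by $k=\chi(B)$. For general $B$, one route is inductive via a handle decomposition of $B$, using additivity of $\mathscr P$ together with the expression $\chi(B)=\sum(-1)^{n_i}$ in terms of handle indices. A cleaner route is to transport the identity from the Becker--Gottlieb transfer in Waldhausen's $A$-theory to the pseudoisotopy summand via the natural splitting~(\ref{form: waldhausen}); the subtlety there is verifying that $\mu_B$ agrees, up to homotopy and on the $\mathscr P$-summand, with the $A$-theoretic transfer for the trivial bundle $\d M\times B\to\d M$.
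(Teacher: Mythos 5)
Your outline is essentially the paper's argument: both start from the commuting square identifying $\iota_{M\times B}\circ\mu_B$ with $\mu_B\circ\iota_M$, both reduce the proposition to the rational $\pi_i$-injectivity of the product map $\mu_B=\times\,\mathbf{id}_B$ on pseudoisotopy spaces, and both appeal to Igusa's stability range and the nonvanishing of $\chi(B)$. The difference is only in how the crucial Euler characteristic fact is packaged. The paper expresses $\mu_B$, up to homotopy, as the composite $(\times\chi(B))\circ\delta_b\circ\Sigma^b$ (stabilize over a disk $I^b\subset B$, extend by the identity, take the $\chi(B)$-power), citing Hatcher's Appendix~I, and then argues that each factor is a $\pi_i^\Q$-isomorphism or (for $\delta_\infty$) a $\pi_i^\Q$-injection; you instead assert the equivalent Becker--Gottlieb identity $\mathscr P(\pi)\circ\mu_B=\chi(B)\cdot\id$ on $\pi_*^\Q\mathscr P(\partial M)$, which follows from the paper's factorization by composing with $\mathscr P(\pi)$ and using that $\mathscr P(\pi)$ is a left homotopy inverse of $\delta_\infty$, and which conversely suffices for the proposition. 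Of your two suggested ways to establish the transfer identity, the handle-induction essentially retraces what Hatcher's Appendix~I provides (additivity of $P$ over handle attachments combined with the anti-commutation of the involution with $\Sigma$, which is the source of the signs $(-1)^{n_i}$), while the $A$-theory route is also plausible but would require verifying that the $A$-theoretic transfer restricted to the $\mathscr P$-summand of the splitting~(\ref{form: waldhausen}) agrees with $\mu_B$. Neither is actually carried out, so the one real gap is that the key lemma is stated, not proved; the paper closes that same gap simply by citing Hatcher.
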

\begin{proof}
Consider the following diagram
\[
\xymatrix{
{\mathscr P}(\partial M)\ar[d]_{\delta_\infty} & P(\partial M\times I^b)\ar[l]
\ar[d]_{\delta_b} & P(\partial M)\ar[l]^{\qquad_{\Sigma^b}} 
\ar[r]_{\iota_{_M}} \ar[d]^{^{\times\,\mathbf{id}_B}}& 
\Diff(M)\ar[d]^{^{\times\,\mathbf{id}_B}}  \\
{\mathscr P}(\partial M\times B) &
P(\partial M\times B)\ar[l]\ar[r]^{^{\times\,\chi(B)}} & P(\partial M\times B) \ar[r]^{\i_{_{M\times B}}} &  \Diff(M\times B)
}
\]
where $I^b$ is identified with an embedded disk in $B$ and $\delta_b$ is the extension
by the identity. The middle bottom arrow is the $\chi(B)$-power map
with respect to the group composition. The unlabeled arrows are the
canonical maps into the direct limit, and  $\delta_\infty$
is the stabilization of $\delta_b$.

The rightmost square commutes, while 
the middle one homotopy commutes~\cite[Appendix I]{Hat-surv}. 
Since $\delta_b$ 
homotopy commutes with $\Sigma$, the leftmost square also homotopy
commutes.

It suffices to show that the map ${\times\,\mathbf{id}_B}$ of pseudoisotopy spaces
is \mbox{$\pi_i^\Q$-injective}. 
Since we are in the pseudoisotopy stable range, $\Sigma^b$ and the unlabeled arrows 
are $\pi_i^\Q$-isomorphisms. The $\chi(B)$-power map induces the multiplication by
$\chi(B)$ on the rational homotopy group, see~\cite[Corollary 1.6.10]{Spa-book}. 
Hence the power map is also $\pi_i^\Q$-isomorphism
as $\chi(B)\neq 0$. Finally, $\pi_i^\Q$-injectivity of $\delta_\infty$ follows
because $\mathscr P(-)$ is a homotopy functor and $\delta_\infty$ has a left homotopy inverse
induced by the coordinate projection $\d M\times B\to \d M$. 
\end{proof}

\begin{proof}[Proof of Theorem~\ref{thm-intro: main app}]
By Theorem~\ref{thm-intro: main} it suffices to check that the group
$\ker\pi_{k-1}^{\Q}(\iota_{_{U\times S^m}})$
is nonzero. A lower bound on $\dim\ker\pi_{k-1}^{\Q}(\iota_{_{U\times S^m}})$ 
is given by Theorem~\ref{thm: localize} and we wish to find cases when the bound is positive.

If the sphere bundle associated with the vector bundle with total space $U$ does not
have singly generated rational cohomology, then the lower bound in Theorem~\ref{thm: localize}
can be made arbitrary large by 
Proposition~\ref{prop: signly generated} and Theorem~\ref{thm: bound on pi_i Diff}. This applies
when $U$ is the tangent bundle to $CP^d$, $HP^d$, $d\ge 2$, and the Cayley plane. 

If $U$ is the total space of a vector bundle over $S^{2d}$, $d\ge 2$, 
with nonzero Euler class, then a positive lower bound in Theorem~\ref{thm: localize}
comes from Corollary~\ref{cor: pi=0 implies diff=0} and the part 1 of Theorem~\ref{thm: bounds on P(E) and P(dE)}.
The same argument works to the Hopf $\mathbb R^4$ bundle over $HP^d$ because it has
nonzero Euler class, so the part 2 of Theorem~\ref{thm: bounds on P(E) and P(dE)} applies.

A nontrivial $\mathbb R^3$ over $HP^d$, $d\ge 1$, cannot have a nowhere zero section,
so it must have nonzero Pontryagin class, see~\cite[Theorem V, p.281]{Mas-sph-bund}.
Then a positive lower bound in Theorem~\ref{thm: localize} 
comes from Corollary~\ref{cor: pi=0 implies diff=0} and the part 3 
of Theorem~\ref{thm: bounds on P(E) and P(dE)}.
This applies to the Hopf $\mathbb R^3$ bundle and the bundles in (4).
Finally, (5) follows from Proposition~\ref{prop: euler char}.
\end{proof}

\small
%\bibliographystyle{plain}
%\bibliographystyle{amsalpha}
%\bibliography{mod-pi-revison-no-comments}

\providecommand{\bysame}{\leavevmode\hbox to3em{\hrulefill}\thinspace}
\providecommand{\MR}{\relax\ifhmode\unskip\space\fi MR }
% \MRhref is called by the amsart/book/proc definition of \MR.
\providecommand{\MRhref}[2]{%
  \href{http://www.ams.org/mathscinet-getitem?mr=#1}{#2}
}
\providecommand{\href}[2]{#2}

\end{document}